\documentclass[11pt]{article}
\usepackage{amsmath, amssymb, amsfonts, amsthm, authblk, color, graphicx, enumitem, mathrsfs, indentfirst}
\usepackage{hyperref, cleveref}
\hypersetup{colorlinks=true, linkcolor=blue, filecolor=magenta, urlcolor=cyan}
\usepackage[textwidth=6in,textheight=9in]{geometry}
\usepackage{float}

\allowdisplaybreaks

\newtheorem{theorem}{Theorem}
\newtheorem{proposition}{Proposition}
\newtheorem{lemma}{Lemma}[section]

\newtheorem{remark}{Remark}[section]

\newtheorem{problem}{Problem}

\newtheorem{example}{Example}

\numberwithin{equation}{section}

\newcommand{\keywords}[1]{\small\textbf{\textit{Keywords---}}#1}

\title{Mathematical analysis and symmetric fractional-order reduction method for diffusion-wave equations}

\author[1]{Dakang Cen}
\author[2]{Caixia Ou\footnote{Corresponding author 1: oucaixiaywxk@163.com.}}
\author[3]{Seakweng Vong\footnote{Corresponding author 2: swvong@um.edu.mo, supported by University of Macau (File No. MYRG-GRG2025-00077-FST, MYRG-GRG2024-00100-FST-UMDF).}}
\affil[1]{Department of Mathematics, Southern University of Science and Technology, Shenzhen, China}
\affil[2,3]{Department of Mathematics, University of Macau, Macao, China.}
\date{}
\begin{document}
\maketitle

\abstract{In this work, our aim is to introduce a symmetric fractional-order reduction (SFOR) method to develop  numerical algorithms on nonuniform temporal meshes for fractional wave equations under lower regularity assumptions. The $L$-type  methods--including $L1$ and $L2$-$1_\sigma$ schemes--are specifically designed for diffusion-wave equations, and we propose novel optimal parameter selections tailored to nonuniform meshes. Finally, several numerical experiments are conducted to validate the efficiency and accuracy of the algorithms.}

\keywords{Diffusion-wave equations, Nonsmooth initial values, Singular sources, Nonuniform meshes, Symmetric fractional-order reduction method}

\textbf{MSC2020:} 35R30, 65M32, 35R11

\section{Introduction}
Assuming that $\alpha\in(1,2)$ and $\Omega:=[0,L] $, we consider the following diffusion-wave equation:
\begin{equation}\label{eq-gov}
\begin{cases}
  \partial_t^\alpha u - u_{xx} = g(x,t), & (x,t) \in \Omega \times (0,T), \\
  u(x,0) = a_0(x), & x \in \Omega, \\
  \frac{\partial}{\partial t}u(x,0) = a_1(x), & x \in \Omega, \\
  u(x,t) = 0, & (x,t) \in \partial \Omega \times (0,T),
\end{cases}
\end{equation}
where the operator $\partial_t^\delta$ is known as the Caputo derivative of order $\delta$:
\[\partial_t^\delta u(t):= (\mathcal{I}^{n-\delta}u^{(n)}
)(t)~\mbox{for}~ t>0 ~\mbox{and}~ n-1<\delta< n,\]
in which $\mathcal{I}^{\beta}$ represents the Riemann–Liouville fractional integral of order $\beta$:
\[\mathcal{I}^{\beta}u(t):= 
 \int_0^t \omega_\beta(t-s)u(s) ds~\mbox{with}~\omega_\beta(t)=\frac{t^{\beta-1}}{\Gamma(\beta)}, ~\beta>0.
\]

Particularly, the diffusion-wave equation, also referred to as the time-fractional wave equation, provides a unified framework for modeling evolutionary processes that interpolate between classical diffusion and wave propagation, such as the propagation of mechanical waves in viscoelastic media \cite{Mainardi,Mainardi1}. For the sake of solving the fractional diffusion-wave equations numerically, a great many of researchers have developed various high-order numerical methods. In 2006, Sun and Wu \cite{Sun1} utilized a standard order reduction method, i.e., by virtue of introducing an auxiliary function $v=u_t$, thereby recasting the original second-order-in-time equation into a first-order coupled system:
\begin{equation}\label{eq-or}
\begin{cases}
\partial_t
^{\alpha-1}v=u_{xx} + g(x, t),\\
v = u_t,
\end{cases}
\end{equation}
for $x\in\Omega, t\in(0, T]$. 
Then, the diffusion-wave equation can be solved following the standard framework of the $L1$ method on uniform grids. In 2014, Wang and Vong \cite{WangJCP} proposed a temporal second-order difference scheme based on weighted and shifted Gr\"{u}nwald difference operator (WSGD) for the following kind of fractional wave equation:  
\begin{align}\label{eq-inte}
u_t + \mathcal{I}^\beta u_{xx}(t) = g(t)~~\mbox{for}~ \beta \in(0, 1) ~\mbox{and}~t\in(0,T].    
\end{align} 
It can be observed that the above integro-differential problem is mathematically equivalent to the Eq. (\ref{eq-gov}) under proper assumptions on $g$ and initial data.
Afterwards, inspired by Alikhanov's work \cite{Alikhanov}, Sun $et.~al$ \cite{Sun} proposed some second-order difference schemes for both one- and two-dimensional time-fractional wave equations. However, the work mentioned above is validate under the assumption that the solution of the fractional model is sufficiently smooth.

Recently, a series of influential works have addressed the numerical approximation of Eq. (\ref{eq-gov}) in the presence of weakly singular solutions. Specially, there are two types of numerical framework for the problems: uniform \cite{Jin2016,Jin2016wave,Xu2007,Luo2019} and nonuniform \cite{Kopteva2019,LiaoH2018L1,Liao2021,Stynes2017} temporal grids. In 2016, Jin $et~al.$ \cite{Jin2016wave} conducted a rigorous analysis of convolution quadrature generated by backward difference formulas, establishing first- and second-order temporal convergence rates under suitable smoothness assumptions on the source term and initial data.    More recently, to address nonsmooth data scenarios, Luo $et~al.$ \cite{Luo2019} proposed a Petrov–Galerkin method achieving a temporal convergence rate of $(3-\alpha)/2$, while Li $et~al.$ \cite{LiB2020} developed a first-order time-stepping discontinuous Galerkin scheme--both methods specifically designed for problems with limited solution regularity. Notably, all aforementioned methods rely on uniform temporal grids. In contrast, Mustapha \& McLean \cite{Mustapha2014} and Mustapha \& Sch\"{o}tzau \cite{Mustapha2013} proposed discontinuous Galerkin time-stepping methods on nonuniform temporal meshes for the fractional wave equation (\ref{eq-inte}), achieving optimal convergence rates and robust temporal accuracy even for nonsmooth solutions.  Laplace transform methods and convolution quadrature methods
on uniform temporal steps were also discussed respectively. In 2021, the standard order reduction method (\ref{eq-or}) was extended  to the corresponding
nonuniform $L2$ scheme tailored for the linear diffusion-wave with weak singular solutions.
Nevertheless, a crucial property for the discrete coefficients--specifically, the positivity and monotonicity property stated in \cite[Assumption 4.1]{Shen2021}, which underpins the stability and convergence analysis--remains unverified. To circumvent this analytical limitation, Lyu and Vong proposed a novel order reduction technique, the symmetric fractional-order reduction method, to numerically solve diffusion-wave equations \cite{LyuP2022SFOR}. The main idea is presented in Lemma \ref{lem-sym}.
\begin{lemma}\label{lem-sym}
For $\alpha\in(1,2)$ and $u(t)\in C^1([0,T])\cap C^2((0,T])$, we have
\[
\partial_t^\alpha u(t)=\partial_t^{\frac{\alpha}{2}}(\partial_t^{\frac{\alpha}{2}}u(t))-u'(0)\omega_{2-\alpha}(t),
\]
where $\partial_t^\beta \psi(t) := \frac{1}{\Gamma(1 - \beta)} \int_0^t (t-\tau)^{-\beta} \psi'(\tau) d\tau$, $\beta\in(0,1)$.
\end{lemma}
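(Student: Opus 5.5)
Set $\beta=\alpha/2\in(1/2,1)$ and $w(t):=\partial_t^{\beta}u(t)=\mathcal{I}^{1-\beta}u'(t)$. The plan is to compute $w'(t)$ explicitly, feed it into the outer Caputo derivative, and collapse the two fractional integrals with the semigroup property $\mathcal{I}^{a}\mathcal{I}^{b}=\mathcal{I}^{a+b}$.

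\textbf{Step 1.} Since $u\in C^1([0,T])\cap C^2((0,T])$, write $u'(\tau)=u'(0)+\int_0^\tau u''(s)\,ds$. Then
\[
w(t)=u'(0)\,\mathcal{I}^{1-\beta}1(t)+\mathcal{I}^{1-\beta}\mathcal{I}^{1}u''(t)=u'(0)\,\omega_{2-\beta}(t)+\mathcal{I}^{2-\beta}u''(t).
\]
Differentiating, and using $\omega_{2-\beta}'=\omega_{1-\beta}$ and $\frac{d}{dt}\mathcal{I}^{2-\beta}=\mathcal{I}^{1-\beta}$, gives
\[
w'(t)=u'(0)\,\omega_{1-\beta}(t)+\mathcal{I}^{1-\beta}u''(t),\qquad t>0.
\]

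\textbf{Step 2.} Apply the outer derivative, $\partial_t^{\beta}w=\mathcal{I}^{1-\beta}w'$:
\[
\partial_t^{\beta}w(t)=u'(0)\,\mathcal{I}^{1-\beta}\omega_{1-\beta}(t)+\mathcal{I}^{2-2\beta}u''(t)=u'(0)\,\omega_{2-2\beta}(t)+\mathcal{I}^{2-\alpha}u''(t).
\]
Here $\mathcal{I}^{a}\omega_b=\omega_{a+b}$ (Beta-function identity), and $\mathcal{I}^{2-\alpha}u''=\partial_t^\alpha u$ by the definition with $n=2$. Since $2-2\beta=2-\alpha$, this reads $\partial_t^{\beta}\partial_t^{\beta}u=\partial_t^\alpha u+u'(0)\,\omega_{2-\alpha}(t)$, which is the claim after rearranging.

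\textbf{Main obstacle.} The only delicate points are analytic. First, $u''$ may be singular at $0$. The statement implicitly needs $u''\in L^1(0,T)$; this follows from $u\in C^1([0,T])$, since $\int_\epsilon^t u''=u'(t)-u'(\epsilon)$ converges as $\epsilon\to0$, at least as an improper integral. I would assume absolute integrability, or handle the integrals as improper ones by integrating over $[\epsilon,t]$ and letting $\epsilon\to0$. Second, I need to justify differentiating $\mathcal{I}^{2-\beta}u''$. I would do this via $\mathcal{I}^{2-\beta}u''=\mathcal{I}^{1}\bigl(\mathcal{I}^{1-\beta}u''\bigr)$ together with $\mathcal{I}^{1-\beta}u''\in L^1$, which gives an absolutely continuous function whose a.e.\ derivative, continuous for $t>0$, is $\mathcal{I}^{1-\beta}u''$. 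Third, $w'$ has only an integrable singularity $\omega_{1-\beta}$ at $0$, so the outer Caputo derivative is well defined in the same integral sense. With these points in place, the computation is just Fubini and the Beta integral.
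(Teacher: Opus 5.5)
The paper gives no proof of Lemma~\ref{lem-sym}; it quotes the result from \cite{LyuP2022SFOR}. Your computation is correct and is the standard derivation. Writing $u'=u'(0)+\mathcal{I}^1u''$ isolates the term $u'(0)\,\omega_{1-\beta}$ in $w'$. One more application of $\mathcal{I}^{1-\beta}$, together with $\mathcal{I}^{a}\omega_b=\omega_{a+b}$ and the semigroup law, then gives $\partial_t^{\beta}\partial_t^{\beta}u=\mathcal{I}^{2-\alpha}u''+u'(0)\,\omega_{2-\alpha}$, which is the claim.

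One statement in your ``main obstacle'' paragraph is wrong: $u\in C^1([0,T])$ does not imply $u''\in L^1(0,T)$. Convergence of $\int_\epsilon^t u''=u'(t)-u'(\epsilon)$ as $\epsilon\to0$ is only conditional convergence. For example, $u'(t)=t\sin(t^{-2})$ gives $u\in C^1([0,T])\cap C^2((0,T])$, but $u''(t)=\sin(t^{-2})-2t^{-2}\cos(t^{-2})$ is not absolutely integrable near $0$.

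For such $u$, Fubini does not justify your interchanges $\mathcal{I}^{1-\beta}\mathcal{I}^{1}u''=\mathcal{I}^{2-\beta}u''$ and $\mathcal{I}^{1-\beta}\mathcal{I}^{1-\beta}u''=\mathcal{I}^{2-\alpha}u''$. Even $\partial_t^\alpha u=\mathcal{I}^{2-\alpha}u''$ then exists only as an improper integral. So the lemma as stated tacitly needs more than its hypotheses say. An improper-integral treatment would need a separate argument, for instance integrating by parts near $0$.

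Take your first option and add the hypothesis $u''\in L^1(0,T)$. This is what is available in the paper's setting: Lemma~\ref{lem-reg-L2-2} gives $\|\partial_t^2u(t)\|_{L^2}\le C(1+t^{\gamma_1\alpha-2}+t^{\gamma_2\alpha-1})$ with $\gamma_1\alpha>1$, which is integrable on $(0,T)$. Under that hypothesis your remaining justifications are correct:
\begin{itemize}
\item absolute continuity of $\mathcal{I}^1(\mathcal{I}^{1-\beta}u'')$;
\item continuity of $\mathcal{I}^{1-\beta}u''$ on $(0,T]$;
\item integrability of $w'$ near $0$.
\end{itemize}
With these, your proof is complete.
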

Since $\lim_{_{t\rightarrow 0}}\partial_t^\beta \psi(t)\rightarrow0$ as $\psi(t)\in C^1[0,T]$. Let $\beta=\alpha/2$, then the model (\ref{eq-gov}) is transformed into the following form:
\begin{equation}\label{eq-gov-trans}
\begin{cases}
  \partial_t^\beta v - u_{xx} = a_1(x)\omega_{2-\alpha}(t)+g(x,t), & (x,t) \in \Omega \times (0,T), \\
  v  = \partial_t^\beta u, & (x,t) \in \Omega \times (0,T), \\
  u(x,0) =a_0(x),~~ v(x,0)=0, & x \in \Omega, \\
  u(x,t) = v(x,t)=0, & (x,t) \in \partial \Omega \times (0,T),
\end{cases}
\end{equation}
where singular source $g(x,t):=t^{1-\alpha}f(x)$. Noting that $a_1(x)\omega_{2-\alpha}(t)$ and $g(x,t)$ $\rightarrow$ $\infty$ as $t\rightarrow0^+$.
A novel form (\ref{eq-gov-trans-novel}) is proposed by $\partial_t^\beta \omega_{2-\alpha/2}(t)=\omega_{2-\alpha}(t)$:
\begin{equation}\label{eq-gov-trans-novel}
\begin{cases}
  \partial_t^\beta z - u_{xx} = 0, & (x,t) \in \Omega \times (0,T), \\
  z  = \partial_t^\beta u-[a_1(x)+\Gamma{(2-\alpha)}f(x)]\omega_{2-\alpha/2}(t), & (x,t) \in \Omega \times (0,T), \\
  u(x,0) =a_0(x),~~ z(x,0)=0, & x \in \Omega, \\
  u(x,t) = z(x,t)=0, & (x,t) \in \partial \Omega \times (0,T).
\end{cases}
\end{equation}

In the following parts of this paper, (\ref{eq-gov-trans}) and (\ref{eq-gov-trans-novel}) are our investigated models. It should be noted that our work is not just an application based on \cite{LyuP2022SFOR}. The authors introduced the auxiliary functions $\textbf{u}=u-ta_1(x)$ and $\textbf{v}  = \partial_t^\beta\textbf{u}$ to make the singular term $a_1(x)\omega_{2-\alpha}(t)$ disappear, see Remark 2.2 in \cite{LyuP2022SFOR}. Following this way, they considered solving the following equation numerically:
\begin{equation}\label{eq-gov-trans-1}
\begin{cases}
  \partial_t^\beta\textbf{v} - \textbf{u}_{xx} = t(a_1)_{xx}(x)+g(x,t), & (x,t) \in \Omega \times (0,T), \\
  \textbf{v}  = \partial_t^\beta \textbf{u}, & (x,t) \in \Omega \times (0,T), \\
  \textbf{u}(x,0) =a_0(x),~~ \textbf{v}(x,0)=0, & x \in \Omega, \\
  \textbf{u}(x,t) = \textbf{v}(x,t)=0, & (x,t) \in \partial \Omega \times (0,T).
\end{cases}
\end{equation}
There are several numerical methods for the model (\ref{eq-gov-trans-1}) (or similar models) \cite{LyuP2021SFOR,An2022,Kundaliya,cendcds,chenlisha,duruilian,liuyang2025}. Until now, only smooth initial functions $a_1(x)\in C^2(\Omega)$ and non-singular source $g(x,t)$ have been considered in numerical experiments. But we notice that the original system can be achieved symmetric order reduction only under the condition that $u(t)\in C^1([0,T])\cap C^2((0,T])$ in the Lemma \ref{lem-sym}. Therefore, it natural for us to raise the following question and we will solve this problem rigorously. 
\begin{problem}
What about the cases $a_1(x)\notin C^2(\Omega)$? From the insight of PDEs theory, (\ref{eq-gov-trans}) or (\ref{eq-gov-trans-novel}) is same to (\ref{eq-gov-trans-1}). However, from the perspective of numerical algorithms, it may be preferable to solve (\ref{eq-gov-trans}) or (\ref{eq-gov-trans-novel}) numerically when $a_1(x)$ has weaker regularity. 
\end{problem}

 The restriction of (\ref{eq-gov-trans-1}), $\Delta a_1(x)$ for $x\in\Omega$, must be well-defined in practical computing. While our numerical frameworks (\ref{eq-gov-trans}) and (\ref{eq-gov-trans-novel}) relax this requirement. Furthermore, we observe that the optimal convergence rate is reached, even in the presence of $a_1(x)\omega_{2-\alpha}(t)$, $a_1(x)\in L^2(\Omega)$. Our main conclusions are as follows. 
\begin{proposition}\label{main-pro1}
If $a_0(x)\in \mathcal{D}((-\Delta )^\frac32)$, $a_1(x)\in \mathcal{D}((-\Delta )^\frac12)$ and $t^{\alpha-1}g(x,t)\in \mathcal{D}((-\Delta )^\frac12)$ for $t\in(0,T]$, then
\begin{align}
    &\|\partial_t^l(-\Delta)^{\frac12} u(t)\|_{L^2(\Omega)}\leq C(1+t^{1-\alpha/4-l}),\\
    &\|\partial_t^l v(t)\|_{L^2(\Omega)}\leq Ct^{1-\alpha/2-l},~~\|\partial_t^l z(t)\|_{L^2(\Omega)}\leq Ct^{\alpha/2-l}.
\end{align}
For $\forall t\in(0,T]$, $L1$ scheme of (\ref{eq-gov-trans}) reaches consistent optimal $H^1(\Omega)$ norm convergence rate when the parameter of the graded mesh is $r=\frac{4-\alpha}{2-\alpha}$. And, $L1$ scheme of (\ref{eq-gov-trans-novel}) reaches consistent optimal $H^1(\Omega)$ norm convergence rate when $r=\max\{\frac{4-\alpha}{\alpha},2\}$.
\end{proposition}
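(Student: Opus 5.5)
We would prove the regularity estimates by expanding in the Dirichlet eigenpairs $(\lambda_k,\phi_k)$ of $-\partial_{xx}$ on $\Omega$ and writing the solution of (\ref{eq-gov}) through Mittag-Leffler functions:
\[
u_k(t)=E_{\alpha,1}(-\lambda_k t^\alpha)a_{0,k}+tE_{\alpha,2}(-\lambda_k t^\alpha)a_{1,k}+\Gamma(2-\alpha)f_k\,tE_{\alpha,2}(-\lambda_k t^\alpha),
\]
where the last term uses $g=t^{1-\alpha}f$ and $\mathcal I^\alpha\omega_{2-\alpha}=\omega_2$ (more generally, a Duhamel integral with $t^{\alpha-1}E_{\alpha,\alpha}$). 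The derivatives then follow from $\frac{d}{dt}E_{\alpha,1}(-\lambda t^\alpha)=-\lambda t^{\alpha-1}E_{\alpha,\alpha}(-\lambda t^\alpha)$ and $\frac{d^l}{dt^l}(tE_{\alpha,2}(-\lambda t^\alpha))=t^{1-l}E_{\alpha,2-l}(-\lambda t^\alpha)$, combined with the uniform bound $|E_{\alpha,\beta}(-x)|\le C/(1+x)$ for $\alpha\in(1,2)$.

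\textbf{Bound for $u$.} For the $a_1$ and $f$ parts of $\|\partial_t^l(-\Delta)^{1/2}u\|$ we need $\lambda_k^{1/2}|t^{1-l}E_{\alpha,2-l}(-\lambda_k t^\alpha)|\,|a_{1,k}|$. Since $a_1\in\mathcal D((-\Delta)^{1/2})$, this is immediate for $l=0$. For $l\ge1$ we split the factor $\lambda^\theta t^{\alpha\theta}/(1+\lambda t^\alpha)$ with the interpolation $\sup_x x^\theta/(1+x)\le1$. Choosing $\theta=1/4$ (i.e.\ spending a quarter power of $\lambda_k$) gives the stated exponent $t^{1-\alpha/4-l}$. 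We take this $\theta$ so that it matches the rate used later in the mesh analysis, but any admissible $\theta$ could be bookkept. The $a_0$ term is smooth enough because $a_0\in\mathcal D((-\Delta)^{3/2})$ allows the full factor $\lambda t^{\alpha-1}$, giving $Ct^{\alpha-l}\le C(1+t^{1-\alpha/4-l})$.

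\textbf{Bounds for $v$ and $z$.} We compute $v=\partial_t^{\alpha/2}u$ modewise. For the $a_1$ part, $\partial_t^{\beta}(tE_{\alpha,2}(-\lambda t^\alpha))=t^{1-\beta}E_{\alpha,2-\beta}(-\lambda t^\alpha)$, which yields $\|\partial_t^l v\|\le Ct^{1-\alpha/2-l}$ by the same Mittag-Leffler bound. Subtracting $(a_1+\Gamma(2-\alpha)f)\omega_{2-\alpha/2}$ removes exactly the leading term, since $t^{1-\beta}/\Gamma(2-\beta)$ is the $\lambda=0$ value. Using $E_{\alpha,2-\beta}(-x)-1/\Gamma(2-\beta)=-xE_{\alpha,2-\beta+\alpha}(-x)$, the remainder is $\lambda t^{1-\beta+\alpha}E_{\alpha,2-\beta+\alpha}$. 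Interpolating with $\lambda^{1/2}$ from $a_1\in\mathcal D((-\Delta)^{1/2})$ produces $t^{1-\beta+\alpha/2}\cdot(\text{bounded})$. This is at least $t^{\alpha/2}$ only after a careful power count, so the gain here should be checked precisely.

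\textbf{Convergence rates.} We would invoke the standard $L1$ truncation analysis on graded meshes $t_n=T(n/N)^r$. For a function behaving like $t^{\sigma}$, the local error satisfies $\tau_n\le Cn^{-\min\{r\sigma,\,2-\beta\}}$, and the discrete fractional Grönwall inequality (Liao et al.) sums these. We apply an $H^1$ energy argument to the coupled system, testing the first equation with $\delta$ of $v$ and the second with $-\Delta$, which exploits the symmetry of $\partial^\beta$ on both equations. This gives the error as the maximum of the truncation errors of $u$ and $v$ (resp.\ $z$). The optimal rate $2-\alpha/2$ then requires $r\sigma\ge2-\alpha/2$. For (\ref{eq-gov-trans}) we take $\sigma=1-\alpha/2$, so $r=(4-\alpha)/(2-\alpha)$. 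For (\ref{eq-gov-trans-novel}) we take $\sigma=\alpha/2$, which gives $(4-\alpha)/\alpha$, and $u$ with $\sigma$ near $1$ enforces $r\ge2$.

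\textbf{Main obstacle.} We expect the hardest step to be the stability estimate for the coupled $L1$ system on nonuniform meshes. It needs positive definiteness of the discrete $L1$ kernel, which holds because the kernels are monotone, together with sharp handling of $t^{1-\alpha/4}$ terms near $t=0$.
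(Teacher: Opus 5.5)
\textbf{Where your proposal matches the paper.} Your overall route is the paper's. The modewise Mittag-Leffler bounds are the same; your choice $\theta=1/4$ is exactly the paper's $\xi_2=3/4$ in Lemma~\ref{thm-reg-delta-2}. The convergence argument is also the same: the $H^1$ energy identity from testing with $\bar v^n$ and $-\nabla\bar u^n$, then Lemma~\ref{FD-ineq} and the discrete Gr\"onwall Lemma~\ref{grownwall}. Your thresholds $r\sigma\ge 2-\alpha/2$ reproduce Theorems~\ref{thm-conv-1} and~\ref{thm-conv-2}.

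\textbf{The step that fails.} Your unweighted pointwise consistency bound $Cn^{-\min\{r\sigma,2-\beta\}}$ is false for $v$, where $\sigma=1-\alpha/2<\beta=\alpha/2$. Near $t=0$ one has $v\approx (a_1+\Gamma(2-\alpha)f)\,\omega_{2-\beta}(t)$. Hence the $L1$ error at $t_1$ is of order $\tau_1^{1-2\beta}=\tau_1^{1-\alpha}$ (unless $a_1+\Gamma(2-\alpha)f=0$), which is unbounded as $N\to\infty$; see the $n=1$ case of Lemma~\ref{lem-r-1}. What is true is the weighted bound $\|\mathcal R_v^n\|_{L^2}\le Ct_n^{-\beta}N^{-\min\{r(1-\beta),2-\beta\}}$. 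The Gr\"onwall sum then closes only through the complementary kernels, via $\sum_{j}P^{(n)}_{n-j}\omega_{1-\beta}(t_j)\le C$ and $P^{(n)}_{n-1}\le C\tau_1^{\beta}$ (Lemma~\ref{P}). The latter reduces the singular first-step error to $\tau_1^{1-\beta}$. You must use this weighted form for $\mathcal R_v^n$, and likewise for $\nabla\mathcal R_u^n$ and $\mathcal R_z^n$; the estimate you wrote is simply not available here.

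\textbf{Two smaller corrections.}
\begin{enumerate}
\item The $z$ power count you left open is immediate. Since $1-\beta+\alpha/2=1\ge\alpha/2$, the $a_1,f$ modes of $z$ are $O(t^{1-l})$. The sharp rate $t^{\alpha/2-l}$ comes from the $a_0$ mode $-\lambda_k t^{\beta}E_{\alpha,1+\beta}(-\lambda_k t^\alpha)a_{0,k}$, which is also the $a_0$ mode of $v$; you never wrote these down. That mode needs only $a_0\in\mathcal D(-\Delta)$, and it is what forces $r\ge (4-\alpha)/\alpha$ (cf.\ Lemma~\ref{thm-reg-L2-z}).
\item The constraint $r\ge 2$ does not come from ``$\sigma$ near $1$''. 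It comes from $\sigma=1-\alpha/4$ exactly, since $r(1-\alpha/4)\ge 2-\alpha/2$ if and only if $r\ge 2$. With $\sigma=1$ you would only obtain $r\ge 2-\alpha/2$.
\end{enumerate}
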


\begin{proposition}\label{main-pro2}
If $a_0(x)\in \mathcal{D}((-\Delta )^2)$, $a_1(x)\in \mathcal{D}((-\Delta ))$ and $t^{\alpha-1}g(x,t)\in \mathcal{D}((-\Delta ))$ for $t\in(0,T]$, then
\begin{align}
    &\|\partial_t^l(-\Delta)u(t)\|_{L^2(\Omega)}\leq C(1+t^{1-\alpha/4-l}), \\
    &\|\partial_t^l(-\Delta)^{\frac12} v(t)\|_{L^2(\Omega)}\leq Ct^{1-\alpha/2-l},
    ~~\|\partial_t^l(-\Delta)^{\frac12} z(t)\|_{L^2(\Omega)}\leq Ct^{\alpha/2-l}.
\end{align}
For $\forall t\in(0,T]$, $L2$-$1_\sigma$ scheme of (\ref{eq-gov-trans}) reaches optimal $H^1(\Omega)$ norm convergence rate when the parameter of the graded mesh is $r=\frac{4}{2-\alpha}$. And, $L2$-$1_\sigma$ scheme of (\ref{eq-gov-trans-novel}) reaches consistent optimal $H^1(\Omega)$ norm convergence rate when $r=\max\{\frac{4}{\alpha},\frac{8}{4-\alpha}\}$.
\end{proposition}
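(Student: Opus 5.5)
The proof splits into a regularity part and an error analysis for the $L2$-$1_\sigma$ scheme on the graded mesh $t_n=T(n/N)^r$.

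\emph{Regularity.} I would expand in the Dirichlet eigenpairs $(\lambda_k,\phi_k)$ of $-\Delta$ on $\Omega$. Each mode of (\ref{eq-gov}) solves a scalar fractional ODE, so
\[
u_k(t)=E_{\alpha,1}(-\lambda_k t^\alpha)a_{0,k}+tE_{\alpha,2}(-\lambda_k t^\alpha)a_{1,k}+\Gamma(2-\alpha)f_k\, tE_{\alpha,2}(-\lambda_k t^\alpha).
\]
Here I use $g=t^{1-\alpha}f$ and the identity $\int_0^t (t-s)^{\alpha-1}E_{\alpha,\alpha}(-\lambda (t-s)^\alpha)s^{1-\alpha}\,ds=\Gamma(2-\alpha)tE_{\alpha,2}(-\lambda t^\alpha)$. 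The modes of $v$ and $z$ then come from applying $\partial_t^\beta$ termwise with $\beta=\alpha/2$, together with the formulas $\partial_t^\beta(tE_{\alpha,2}(-\lambda t^\alpha))=t^{1-\beta}E_{\alpha,2-\beta}(-\lambda t^\alpha)$ and $\partial_t^\beta E_{\alpha,1}(-\lambda t^\alpha)=-\lambda t^{\alpha-\beta}E_{\alpha,1+\alpha-\beta}(-\lambda t^\alpha)$. For $z$, one subtracts $(a_{1,k}+\Gamma(2-\alpha)f_k)\omega_{2-\alpha/2}$. The leading term $t^{1-\beta}/\Gamma(2-\beta)$ then cancels, leaving $t^{1-\beta}\big(E_{\alpha,2-\beta}(-\lambda t^\alpha)-1/\Gamma(2-\beta)\big)=O(\lambda t^{1-\beta+\alpha})$. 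To get the $t$-derivatives I would use the derivative formula $\frac{d^l}{dt^l}\big(t^{\gamma-1}E_{\alpha,\gamma}(-\lambda t^\alpha)\big)=t^{\gamma-l-1}E_{\alpha,\gamma-l}(-\lambda t^\alpha)$ and the bound $|E_{\alpha,\gamma}(-x)|\le C/(1+x)$. Note that $\alpha<2$ keeps us in the completely bounded regime.

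The key step is then an interpolation trade: $\lambda^{s}|E(-\lambda t^\alpha)|\le Ct^{-\alpha s}$ for $s\in[0,1]$. For $\|(-\Delta)v\|$ with $a_1\in\mathcal D((-\Delta)^{1/2})$, I spend half a power of $\lambda$, which costs $t^{-\alpha/2}$ against $t^{1-\beta-l}$. Balancing the powers of $\lambda$ against the assumed smoothness of $a_0,a_1,f$ gives the stated exponents $1-\alpha/2-l$, $\alpha/2-l$, and $1-\alpha/4-l$ (the last being the $(-\Delta)$-weighted $u$ bound). Any constant part is absorbed into the "$1+$".

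\emph{Error analysis.} On the mesh, the $L2$-$1_\sigma$ formula at $t_{n-\theta}$ has the positivity/monotonicity properties of Alikhanov and of Liao et al. I would take the $L^2$ inner product of the discrete $v$-equation with $\delta$-weighted $u$-increments and of the $u$-equation with $-\Delta$ of the $v$ values, so that the coupling terms cancel. This is the symmetric structure of Lemma \ref{lem-sym}. Combining this with the discrete fractional Gr\"onwall inequality of Liao et al. gives
\[
\|\nabla e_u^n\|\le C\sum_j P^{(n)}_{n-j}\big(\|\nabla R_u^j\|+\|R_v^j\|\big),
\]
where $P$ denotes the complementary convolution kernels. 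The truncation errors then need to be bounded using the regularity above. This yields local terms $\tau_j^{3-\beta}t^{\gamma-3}$, with $\gamma$ the singularity exponent: $\gamma=1-\alpha/2$ for $v$ in (\ref{eq-gov-trans}), $\gamma=\alpha/2$ for $z$ in (\ref{eq-gov-trans-novel}), and $\gamma=1-\alpha/4$ for $u$. Using the standard lemma
\[
\sum_j P^{(n)}_{n-j}\,t_j^{-\beta}\min\{1,(t_j/t_1)^{\gamma}\ldots\}\lesssim N^{-\min\{r\gamma,2\}},
\]
I get the rate $N^{-\min\{2,r\gamma_{\min}\}}$. Optimality then forces $r\ge 2/\gamma$ for every component. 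For (\ref{eq-gov-trans}) the binding constraint is $r\ge 4/(2-\alpha)$, and it dominates $8/(4-\alpha)$. For (\ref{eq-gov-trans-novel}) the constraints are $r\ge 4/\alpha$ and $r\ge 8/(4-\alpha)$, which matches the stated parameters.

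\emph{Main obstacle.} The hardest step is the consistency estimate at the first step(s) and the interaction between the singular source term and the auxiliary variable. In (\ref{eq-gov-trans}), $\partial_t^\beta v$ with $\|v''\|\sim t^{-1-\alpha/2}$ pushes the local error near $t_1$ to its limit. I would first try the approach of Liao–McLean–Zhang: split the truncation error into $j=1$ and $j\ge 2$ and bound each with $\tau_j\le C t_j N^{-1}$. The other delicate point is verifying that the $H^1$ energy argument closes without requiring $\Delta a_1$, which is exactly why the $v$ (or $z$) bound is needed in $(-\Delta)^{1/2}$.
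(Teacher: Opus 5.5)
Your plan is essentially the paper's own route: the Mittag-Leffler eigen-expansion gives the regularity bounds for $(-\Delta)u$, $(-\Delta)^{\frac12}v$ and $(-\Delta)^{\frac12}z$. The symmetric energy identity, the discrete fractional Gr\"onwall inequality with complementary kernels $P^{(n)}_{n-j}$, and the Liao et al.\ Alikhanov consistency bounds then yield $N^{-\min\{r(1-\beta),2\}}+N^{-\min\{r(1-\beta/2),2\}}$ (with $r\beta$ replacing $r(1-\beta)$ for $z$), and the stated values of $r$ follow. Two details are transposed as written: the coupling terms cancel only when the first error equation is tested with $\bar v^{n-\theta}$ and the gradient of the second with $-\nabla\bar u^{n-\theta}$ (not with $u$-increments and $-\Delta$ of $v$), and under $a_1\in\mathcal{D}(-\Delta)$ the bound for $(-\Delta)^{\frac12}v$ needs no half-power trade (your trade for $(-\Delta)v$ would give $t^{1-\alpha-l}$); the trade is instead what handles the $a_1,f$ part of $(-\Delta)^{\frac12}z$, giving $t^{1-l}\le Ct^{\alpha/2-l}$.
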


The above propositions discuss the $H^1(\Omega)$ norm convergence of numerical schemes.
For the case of discontinue $a_1(x)$, similar results can be derived from the $L^2(\Omega)$ norm.
\begin{proposition}\label{main-pro3}
Under the assumptions $a_0(x)\in H^2(\Omega)\cap H_0^1(\Omega)$, $a_1(x)\in L^2(\Omega)$ and $t^{\alpha-1}g(x,t)\in L^2(\Omega)$ for $t\in(0,T]$.
For $\forall t\in(0,T]$, $L1$ scheme of (\ref{eq-gov-trans}) reaches consistent optimal $L^2(\Omega)$ norm convergence rate when the parameter of the graded mesh is $r=\frac{4-\alpha}{2-\alpha}$. And, $L1$ scheme of (\ref{eq-gov-trans-novel}) reaches consistent optimal $L^2(\Omega)$ norm convergence rate when $r=\max\{\frac{4-\alpha}{\alpha},2\}$.
\end{proposition}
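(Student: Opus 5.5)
Because Proposition \ref{main-pro3} concerns the $L^2(\Omega)$ norm while $a_1$ is only in $L^2(\Omega)$, the plan is to reuse the $H^1$ machinery of Proposition \ref{main-pro1} with the spatial index shifted down by one power of $(-\Delta)^{1/2}$.

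\textbf{Step 1: regularity.} Expand $u$, $v$, $z$ in the Dirichlet eigenpairs $(\lambda_k,\varphi_k)$ of $-\Delta$ on $\Omega$. Each mode of (\ref{eq-gov}) is solved by Mittag-Leffler functions:
\[
u_k(t)=E_{\alpha,1}(-\lambda_k t^\alpha)a_{0,k}+tE_{\alpha,2}(-\lambda_k t^\alpha)a_{1,k}+(\text{source part}),
\]
and $v_k=\partial_t^{\alpha/2}u_k$ and $z_k$ follow from this. Using $|E_{\alpha,\gamma}(-x)|\le C/(1+x)$ and the termwise derivative formulas, I would prove
\[
\|\partial_t^l u(t)\|\le C(1+t^{1-\alpha/4-l}),\qquad \|\partial_t^l (-\Delta)^{-1/2}v(t)\|\le Ct^{1-\alpha/2-l},\qquad \|\partial_t^l(-\Delta)^{-1/2}z(t)\|\le Ct^{\alpha/2-l}.
\]
These are the estimates of Proposition \ref{main-pro1} shifted by $(-\Delta)^{-1/2}$. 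The hypotheses fit this shift: $a_0\in H^2\cap H_0^1=\mathcal D(-\Delta)$ is one half-power below $\mathcal D((-\Delta)^{3/2})$, and the same holds for $a_1$ and $t^{\alpha-1}g$.

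For $a_1$ the interpolation argument is as follows. A factor $\lambda_k^{-1/2}$ together with $\sup_x x^{1/2}|E(-x)|<\infty$ trades $\lambda_k^{1/2}$ for a factor $t^{-\alpha/4}$. This is the source of the exponent $1-\alpha/4$.

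\textbf{Step 2: error equations.} Write the $L1$ discretization on the graded mesh $t_n=T(n/N)^r$, with $\beta=\alpha/2$. The errors $e_u$ and $e_v$ (respectively $e_z$) satisfy
\[
D_\tau^\beta e_v^n-\Delta e_u^n=R_1^n,\qquad e_v^n=D_\tau^\beta e_u^n+R_2^n.
\]
For the $H^1$ result one tests the first equation with $D_\tau^\beta e_u^n$ and the second with $-\Delta$ applied. For $L^2$ I would instead test with $(-\Delta)^{-1}$-weighted quantities, namely the first equation with $(-\Delta)^{-1}D_\tau^\beta e_u^n$, working in the $H^{-1}$ inner product. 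The symmetric structure makes the cross terms cancel. The positive-semidefiniteness of the $L1$ kernel, $\sum_n (D_\tau^\beta w^n)w^n\ge\cdots$, gives a discrete energy bound. A discrete fractional Gr\"onwall inequality then bounds $\|e_u^n\|_{L^2}$ and $\|e_v^n\|_{H^{-1}}$ by sums of $L1$ truncation errors of $u$ and of $(-\Delta)^{-1/2}v$.

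\textbf{Step 3: mesh parameter.} Insert the Step 1 singular bounds into the standard graded-mesh truncation estimate for $L1$: a function behaving like $t^{\sigma}$ gives local error $O(N^{-\min\{r\sigma,\,2-\beta\}})$ after global accumulation. For (\ref{eq-gov-trans}) the limiting component is $v\sim t^{1-\alpha/2}$ with $\beta=\alpha/2$. Balancing $r\sigma$ against $2-\alpha/2$ gives $r=(4-\alpha)/(2-\alpha)$.

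For (\ref{eq-gov-trans-novel}) the components are $z\sim t^{\alpha/2}$ and the smooth-plus-$t^{1-\alpha/4}$ behaviour of $u$. The first requires $r\ge(4-\alpha)/\alpha$ and the second $r\ge2$. This reproduces $r=\max\{\frac{4-\alpha}{\alpha},2\}$, exactly as in Proposition \ref{main-pro1}.

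\textbf{Main obstacle.} I expect the hardest part to be Step 1 for $a_1\in L^2$ only. There the term $\lambda_k^{1/2}tE_{\alpha,2}$ must be controlled in the shifted norm uniformly in $k$ for all derivatives $l\le2$. I would first verify this carefully via the Laplace-transform contour representation. A secondary concern is that the $H^{-1}$-weighted energy argument preserves the $L1$ kernel positivity; this should follow since $(-\Delta)^{-1}$ commutes with the time discretization.
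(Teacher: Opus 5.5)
Your plan is essentially the paper's argument (Lemmas~\ref{lem-L2-v}--\ref{lem-L2-z} and the remark after them). Testing the first error equation with $(-\Delta)^{-1}D_\tau^\beta e_u^n$ and substituting both error equations gives exactly the paper's identity $(D_\tau^\beta e_v^n,e_v^n)_{H^{-1}}+(D_\tau^\beta e_u^n,e_u^n)=(R_1^n,e_v^n)_{H^{-1}}-(R_2^n,e_u^n)$, and this is closed with the pointwise kernel inequality of Lemma~\ref{FD-ineq} (not a summed positivity), the Gr\"onwall Lemma~\ref{grownwall}, and the balance $r\sigma=2-\beta$. One minor remark: with $a_1\in L^2$, the part of $u$ driven by $a_1$ carries no factor $\lambda_k^{1/2}$ in the $L^2$ norm and satisfies $\|\partial_t^l u_{a_1}\|\le Ct^{1-l}$ directly, so the bound $C(1+t^{1-\alpha/4-l})$ you (and the paper) use is a safe overestimate and your ``main obstacle'' does not actually arise.
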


The structure of this paper is as follows. The stability of three kinds of equivalent models is investigated in Section \ref{sec-stability}. Detailed regularity theory is presented in Section \ref{sec-reg}. In Section \ref{sec-na}, two types of numerical algorithms are constructed to solve the considered models. Numerical experiments are carried out to verify our theoretical results in Section \ref{sec-num}.

\section{Stability for (\ref{eq-gov-trans}), (\ref{eq-gov-trans-novel}) and (\ref{eq-gov-trans-1})}\label{sec-stability}
In this section, we prove that for different models, their stability is presented based on different regularity cases of $a_1(x)$. An interesting finding shows that smoother $a_1(x)$ matches the power function $t^p$ with the larger index $p$. 
Specifically, $p$ takes $1-\alpha$, $1-\alpha/2$ and $1$ for source terms in (\ref{eq-gov-trans}), (\ref{eq-gov-trans-novel}) and (\ref{eq-gov-trans-1}).

A useful property of fractional derivative is presented. The proof can be found in theorem 3.4 (ii) in \cite{Kubica}. Let $L^2(\Omega)$ be the square-integrable function space with inner product $(\cdot,\cdot)_{L^2(\Omega)}$ (or $(\cdot,\cdot)$ for short). 
\begin{lemma}\label{Coercivity}(Coercivity) For any function $h\in C^1[0,T]$ with $\alpha\in(0,1)$, one has the inequity 
    $$\frac{2}{\Gamma{(\alpha)}}\int_0^t(t-s)^{\alpha-1}h(s)\partial_t^\alpha h(s)ds\geq h^2(t)-h^2(0).$$
\end{lemma}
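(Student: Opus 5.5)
The plan is to reduce the inequality to an integration by parts against a monotone kernel. Set $k(s):=\omega_\alpha(t-s)=(t-s)^{\alpha-1}/\Gamma(\alpha)$ for $s\in(0,t)$. This kernel is positive and increasing in $s$, and it blows up as $s\to t^-$. We also write $\partial_t^\alpha h(s)=\int_0^s\omega_{1-\alpha}(s-\tau)h'(\tau)\,d\tau$. The left-hand side is $2\int_0^t k(s)h(s)\,\partial_t^\alpha h(s)\,ds$. The target is to show that it is at least $2\int_0^t h(s)h'(s)\,ds=h^2(t)-h^2(0)$.

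The natural route goes through the Riemann--Liouville identity $\mathcal I^{\alpha}\partial_t^\alpha h=h-h(0)$, which holds for $h\in C^1[0,T]$ by the semigroup property $\mathcal I^\alpha\mathcal I^{1-\alpha}h'=\mathcal I^1h'$. Put $w:=\partial_t^\alpha h$, which is continuous on $[0,T]$, and $H(s):=h(s)-h(0)=\mathcal I^\alpha w(s)$. Then
\[
\int_0^t k(s)h(s)w(s)\,ds=h(0)\int_0^t k(s)w(s)\,ds+\int_0^t k(s)H(s)w(s)\,ds .
\]
The first term equals $h(0)\,\mathcal I^\alpha w(t)=h(0)H(t)$. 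For the second term, the key step is the quadratic-form inequality
\[
2\int_0^t\omega_\alpha(t-s)\,w(s)\,(\mathcal I^\alpha w)(s)\,ds\;\ge\;\big(\mathcal I^\alpha w(t)\big)^2 .
\]
With this in hand, the left-hand side is at least $2h(0)H(t)+H(t)^2=h^2(t)-h^2(0)$, as required.

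To prove the quadratic-form inequality, set $F(\tau):=\mathcal I^\alpha w(\tau)$, so that $F(t)=\int_0^t\omega_\alpha(t-s)w(s)\,ds$. The quantity to bound is $2\iint_{0<\sigma<s<t}\omega_\alpha(t-s)\,\omega_\alpha(s-\sigma)\,w(s)w(\sigma)$. The square $F(t)^2$, by contrast, is the full symmetric double integral $\iint_{[0,t]^2}\omega_\alpha(t-s)\,\omega_\alpha(t-\sigma)\,w(s)w(\sigma)$. So the difference is $\iint_{[0,t]^2}K(s,\sigma)\,w(s)w(\sigma)$, where $K$ is the symmetrization $K(s,\sigma)=\omega_\alpha(t-\max)\,\omega_\alpha(|s-\sigma|)-\omega_\alpha(t-s)\,\omega_\alpha(t-\sigma)$. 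I would show that $K$ is a positive semidefinite kernel. The approach is to write $\omega_\alpha$ as a completely monotone function, $\omega_\alpha(x)=c_\alpha\int_0^\infty e^{-\lambda x}\lambda^{-\alpha}\,d\lambda$, and then use the standard Fourier/Laplace positivity of $\int_0^\infty\!\int_0^\infty$ exponential kernels.

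If that route becomes messy, the fallback is a direct "Alikhanov-type" argument. Write $h\,\partial^\alpha h=\tfrac12\partial^\alpha(h^2)+R$ pointwise, with
\[
R(s)=\tfrac12\int_0^s\frac{\partial}{\partial\tau}\!\Big[-\omega_{1-\alpha}(s-\tau)\Big]\,\big(h(s)-h(\tau)\big)^2\,d\tau+\tfrac12\,\omega_{1-\alpha}(s)\,\big(h(s)-h(0)\big)^2\ge0 .
\]
This is obtained by integrating by parts in $\tau$, and $R\ge0$ because $\omega_{1-\alpha}$ is decreasing. Then
\[
2\int_0^t k\,h\,\partial^\alpha h\;\ge\;\int_0^t k\,\partial^\alpha(h^2)=\mathcal I^\alpha\partial^\alpha(h^2)(t)=h^2(t)-h^2(0),
\]
using $k\ge0$ and the same semigroup identity applied to $h^2\in C^1$. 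This fallback is in fact cleaner and fully elementary, so I would lead with it. The only delicate step is justifying the integration by parts near $\tau=s$: the boundary term vanishes there because $(h(s)-h(\tau))^2=O((s-\tau)^2)$ while the kernel is only $O((s-\tau)^{-\alpha})$.
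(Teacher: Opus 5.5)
The paper gives no argument of its own here; it only cites Theorem 3.4(ii) of Kubica et al. Your fallback is the standard self-contained proof of this estimate, and it is correct apart from a sign slip in $R$. It combines three ingredients: the pointwise Alikhanov inequality $h\,\partial_t^\alpha h\ge\tfrac12\partial_t^\alpha(h^2)$, integration against the nonnegative kernel $\omega_\alpha(t-\cdot)$, and the identity $\mathcal{I}^\alpha\partial_t^\alpha\phi=\phi-\phi(0)$ for $\phi=h^2\in C^1[0,T]$.

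Integrating by parts actually gives
\[
R(s)=\tfrac12\int_0^s\partial_\tau\bigl[\omega_{1-\alpha}(s-\tau)\bigr]\bigl(h(s)-h(\tau)\bigr)^2\,d\tau+\tfrac12\,\omega_{1-\alpha}(s)\bigl(h(s)-h(0)\bigr)^2,\qquad \partial_\tau\bigl[\omega_{1-\alpha}(s-\tau)\bigr]=\frac{\alpha\,(s-\tau)^{-1-\alpha}}{\Gamma(1-\alpha)}\ge 0 .
\]
Your kernel $\partial_\tau[-\omega_{1-\alpha}(s-\tau)]$ is nonpositive, so as written the identity is false and the nonnegativity argument would not go through. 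For example, with $h(\tau)=\tau$ the true value is $R(s)=(1-\alpha)s^{2-\alpha}/\Gamma(3-\alpha)$, while your formula returns $(1-\alpha)$ times that. Your stated reason, that $\omega_{1-\alpha}$ is decreasing, is exactly what makes the corrected kernel nonnegative. So this is a notational fix rather than a gap; the boundary-term argument at $\tau=s$ is fine as you give it.

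You can drop the first route. Since $H=h-h(0)\in C^1[0,T]$ satisfies $\partial_t^\alpha H=w$, the quadratic-form inequality is literally the lemma applied to $H$, i.e.\ the special case $h(0)=0$. The decomposition therefore only reduces the statement to a special case of itself. The positive semidefiniteness of $K$ via complete monotonicity is asserted, not shown. Leading with the fallback, as you propose, is the right choice.
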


\begin{lemma}
For the stability of (\ref{eq-gov-trans}), one has    
$$\|v\| + \|\nabla u\|\leq C(\|\nabla a_0\|+\|a_1\|+\|f\|),$$
where $C$ is a constant depends on $t$ and $\alpha$.
\end{lemma}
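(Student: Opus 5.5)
We use an energy argument built on the symmetric structure of (\ref{eq-gov-trans}): test the first equation against $v$, test the second (after applying $-\Delta$) against $u$, and add. Write the source as $F(x,t):=a_1(x)\omega_{2-\alpha}(t)+t^{1-\alpha}f(x)=\big[a_1(x)+\Gamma(2-\alpha)f(x)\big]\omega_{2-\alpha}(t)$, and set $\beta=\alpha/2\in(\frac12,1)$.

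\textbf{Step 1 (energy identity).} Take the $L^2$ inner product of $\partial_t^\beta v-u_{xx}=F$ with $v$, and of $\partial_t^\beta \nabla u=\nabla v$ with $\nabla u$; the latter is the gradient of the second equation. Integrating by parts with the homogeneous boundary conditions gives $(-u_{xx},v)=(\nabla u,\nabla v)$. Subtracting the two identities therefore cancels the coupling term:
\begin{align*}
(\partial_t^\beta v,v)+(\partial_t^\beta\nabla u,\nabla u)=(F,v).
\end{align*}

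\textbf{Step 2 (apply coercivity).} Next apply the convolution operator $\frac{1}{\Gamma(\beta)}\int_0^t(t-s)^{\beta-1}(\cdot)\,ds$ to the identity. Use Lemma \ref{Coercivity} pointwise in $x$ (on a spectral/Galerkin expansion, or directly in the $L^2$-valued form), together with $v(0)=0$ and $u(0)=a_0$. This yields
\begin{align*}
\|v(t)\|^2+\|\nabla u(t)\|^2\le \|\nabla a_0\|^2+\frac{2}{\Gamma(\beta)}\int_0^t(t-s)^{\beta-1}\|F(s)\|\,\|v(s)\|\,ds .
\end{align*}

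\textbf{Step 3 (close the estimate).} Let $E(t)=\sup_{s\le t}\big(\|v(s)\|+\|\nabla u(s)\|\big)$. Then $\|F(s)\|\le C s^{1-\alpha}(\|a_1\|+\|f\|)$, and the remaining kernel integral is a Beta function:
\begin{align*}
\int_0^t(t-s)^{\beta-1}s^{1-\alpha}\,ds=B(\beta,2-\alpha)\,t^{1-\alpha/2}.
\end{align*}
It is finite because $1-\alpha>-1$. Taking the supremum over $[0,t]$ of the left side of Step 2 gives
\begin{align*}
E(t)^2\le \|\nabla a_0\|^2+C\,t^{1-\alpha/2}\,(\|a_1\|+\|f\|)\,E(t).
\end{align*}
Young's inequality then gives $E(t)\le C(\|\nabla a_0\|+\|a_1\|+\|f\|)$ with $C$ depending on $t$ and $\alpha$.

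\textbf{Main obstacle.} The delicate step is justifying Lemma \ref{Coercivity}, which requires $h\in C^1[0,T]$. Here $v$ behaves like $t^{1-\alpha/2}$ near $0$ (cf. Proposition \ref{main-pro1}), so it is only continuous there. We would handle this by regularization: mollify $F$, or equivalently work with Galerkin/eigenfunction truncations where each mode solves a scalar system with smooth data. For such truncations the lemma applies, the bound holds uniformly in the truncation, and we then pass to the limit. It must also be checked that the singular factor $\omega_{2-\alpha}$ enters only through the integrable Beta integral above, which holds since $\alpha<2$.
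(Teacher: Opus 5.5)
Your proposal is correct and follows the paper's proof: the same energy identity $(\partial_t^\beta v,v)+(\partial_t^\beta\nabla u,\nabla u)=(F,v)$ from testing with $v$ and $\nabla u$, then Lemma \ref{Coercivity} and $\mathcal{I}^{\beta}\omega_{2-\alpha}=\omega_{2-\alpha/2}$ (your Beta integral), and your supremum $E(t)$ cleanly repairs the step where the paper simply pulls $\|v\|+\|\nabla u\|$ out of the convolution. One caveat on your regularization remark: eigenfunction truncation alone does not make the data smooth in time, since each mode still carries the $\omega_{2-\alpha}$ factor and so $v$ remains non-$C^1$ at $t=0$; what actually justifies Lemma \ref{Coercivity} here is mollifying $F$, or using the version of the coercivity inequality for absolutely continuous $h$.
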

\begin{proof}
Taking the inner product $(\cdot,\cdot)_{L^2(\Omega)}$ with $v$ and $\Delta u$ for the first two equations of (\ref{eq-gov-trans}), respectively. It gives that
\begin{align*}
(\partial_t^\beta v,v) - (\Delta u,v) &= \omega_{2-\alpha}(t)(a_1,v)+\Gamma{(2-\alpha)}\omega_{2-\alpha}(t)(f,v), \\
(v,\Delta u)  &= (\partial_t^\beta u,\Delta u). 
\end{align*}
Adding above equations, it comes that
\begin{align*}
(\partial_t^\beta v,v) + (\partial_t^\beta \nabla u,\nabla u)
&= \omega_{2-\alpha}(t)(a_1,v)+\Gamma{(2-\alpha)}\omega_{2-\alpha}(t)(f,v)\\
&\le \omega_{2-\alpha}(t)(\|a_1\|+\Gamma{(2-\alpha)}\|f\|)(\|v\|+\|\nabla u\|).
\end{align*}
By Lemma \ref{Coercivity}, one has
\begin{align*}
\|v\|^2 + \|\nabla u\|^2
\le& \|\nabla a_0\|^2+2(\|a_1\|+\Gamma{(2-\alpha)}\|f\|)\mathcal{I}^{\beta}\omega_{2-\alpha}(t)(\|v\|+\|\nabla u\|),
\end{align*}
where $\mathcal{I}^{\beta}\omega_{2-\alpha}(t)=\omega_{2-\alpha/2}(t)$.
The desired result follows by the triangle inequality.
\end{proof}

\begin{lemma}
For the stability of (\ref{eq-gov-trans-novel}), one has    
$$\|z\| + \|\nabla u\|\leq C(\|\nabla a_0\|+\|\nabla a_1\|+\|\nabla f\|),$$
where $C$ is a constant depends on $t$ and $\alpha$.
\end{lemma}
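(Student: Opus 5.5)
We follow the energy argument of the preceding lemma, now applied to (\ref{eq-gov-trans-novel}). The forcing appears in the constitutive relation for $z$ rather than in the $z$-equation, so we test that relation against $\Delta u$. Write $b(x):=a_1(x)+\Gamma(2-\alpha)f(x)$, so that $z=\partial_t^\beta u-b\,\omega_{2-\alpha/2}(t)$.

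First, take the inner product of the first equation of (\ref{eq-gov-trans-novel}) with $z$, and of the second equation with $\Delta u$:
\begin{align*}
(\partial_t^\beta z,z)-(\Delta u,z)&=0,\\
(z,\Delta u)&=(\partial_t^\beta u,\Delta u)-\omega_{2-\alpha/2}(t)(b,\Delta u).
\end{align*}
Add the two identities and integrate by parts, using the homogeneous Dirichlet conditions on $u$ (and hence on $\partial_t^\beta u$). The $\Delta u$–$z$ cross terms cancel, leaving
\[
(\partial_t^\beta z,z)+(\partial_t^\beta\nabla u,\nabla u)=-\omega_{2-\alpha/2}(t)(\nabla b,\nabla u)\le \omega_{2-\alpha/2}(t)\|\nabla b\|\,\|\nabla u\|.
\]
This is where the $H^1$ norms of $a_1$ and $f$ enter: they replace the $L^2$ norms of the previous lemma, because the right-hand side is now paired with $\nabla u$ instead of $z$.

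Next, apply $\mathcal{I}^\beta$ to both sides and use Lemma \ref{Coercivity} pointwise in $x$. Since $z(0)=0$ and $u(0)=a_0$, this gives
\[
\|z(t)\|^2+\|\nabla u(t)\|^2\le\|\nabla a_0\|^2+2\|\nabla b\|\,\mathcal{I}^\beta\!\big[\omega_{2-\alpha/2}\|\nabla u\|\big](t),
\]
and $\mathcal{I}^\beta\omega_{2-\alpha/2}=\omega_{2}(t)=t$. To close the estimate, set $M(t)=\sup_{s\le t}(\|z(s)\|+\|\nabla u(s)\|)$. Bounding $\|\nabla u\|\le M(t)$ inside the integral gives $M^2\le\|\nabla a_0\|^2+2t\|\nabla b\|M$. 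A Young inequality then yields $M\le C(t)(\|\nabla a_0\|+\|\nabla a_1\|+\|\nabla f\|)$, using the triangle inequality $\|\nabla b\|\le\|\nabla a_1\|+\Gamma(2-\alpha)\|\nabla f\|$.

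The main obstacle is justifying the coercivity step. Lemma \ref{Coercivity} requires $C^1$ regularity in time, whereas here $z$ and $u$ may be singular near $t=0$; in particular $\partial_t^\beta u$ contains the $t^{1-\alpha/2}$ term. We would handle this in the same formal way as the preceding lemma, or else by a density/regularization argument in time, noting that $\omega_{2-\alpha/2}$ is continuous with value $0$ at $t=0$, so that $z(0)=0$ remains meaningful. A secondary point is that applying the lemma pointwise in $x$ and then integrating over $\Omega$ gives the bound on $\|\cdot\|^2$. The monotonicity of $M$ handles the sup, since the right-hand side is nondecreasing in $t$.
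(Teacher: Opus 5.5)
Your proof is correct and is essentially the paper's argument. Testing the second equation with $\Delta u$ and integrating by parts is the same as the paper's step of applying $\nabla$ to that equation and testing with $-\nabla u$; both then use Lemma \ref{Coercivity} and $\mathcal{I}^\beta\omega_{2-\alpha/2}=\omega_2$, and your sup/Young closing is a cleaner version of the paper's ``triangle inequality'' step. Two small slips do not affect the bound: the right-hand side should be $+\omega_{2-\alpha/2}(t)(\nabla b,\nabla u)$, and integrating $(b,\Delta u)$ by parts uses $b=0$ on $\partial\Omega$, which follows from $u=z=0$ there rather than from the condition on $u$ alone.
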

\begin{proof}
Acting $\nabla$ on the second equation of (\ref{eq-gov-trans-novel}). Taking the inner product $(\cdot,\cdot)_{L^2(\Omega)}$ with $z$ and $-\nabla u$ for the first two equations, respectively. It gives that
\begin{align*}
(\partial_t^\beta z,z) - (\Delta u,z) &= 0, \\
-(\nabla z,\nabla u)  &= -(\partial_t^\beta \nabla u,\nabla u)+\omega_{2-\alpha/2}(t)(\nabla a_1+\Gamma{(2-\alpha)}\nabla f,\nabla u). 
\end{align*}
Adding above equations, it comes that
\begin{align*}
(\partial_t^\beta z,z) + (\partial_t^\beta \nabla u,\nabla u)
&= \omega_{2-\alpha/2}(t)(\nabla a_1+\Gamma{(2-\alpha)}\nabla f,\nabla u)\\
&\le \omega_{2-\alpha/2}(t)(\|\nabla a_1\|+\Gamma{(2-\alpha)}\|\nabla f\|)(\|z\|+\|\nabla u\|).
\end{align*}
By Lemma \ref{Coercivity}, one has
\begin{align*}
\|z\|^2 + \|\nabla u\|^2
\le& \|\nabla a_0\|+2(\|\nabla a_1\|+\Gamma{(2-\alpha)}\|\nabla f\|)\mathcal{I}^{\beta}\omega_{2-\alpha/2}(t)(\|z\|+\|\nabla u\|),
\end{align*}
where $\mathcal{I}^{\beta}\omega_{2-\alpha/2}(t)=\omega_{2}(t)$.
The desired result follows by the triangle inequality.
\end{proof}

\begin{remark}
Similarly, one has the stability of (\ref{eq-gov-trans-1}). That is  $$\|\nabla u\|\le C(\|\nabla a_0\|+\|\Delta a_1\|+\|f\|+\|\nabla a_1\|).$$
\end{remark}

\section{Regularity theory for the model}\label{sec-reg}
In this section, we recall the well-posedness result of the initial-boundary value problem \eqref{eq-gov}. For this, we make several settings. Let $H^1(\Omega)$, $H^2(\Omega)$ etc. be the usual Sobolev spaces.

The set $\{\lambda_k, \varphi_k\}_{k=1}^{\infty}$ constitutes the Dirichlet eigensystem of the elliptic operator $-\Delta: H^2(\Omega) \cap H_0^1(\Omega) \to L^2(\Omega)$, specifically,
\begin{equation*}
\begin{cases}
-\Delta \varphi_k = \lambda_k \varphi_k & \text{in } \Omega, \\
\varphi_k =0 & \text{on } \partial \Omega,
\end{cases}
\end{equation*}
where $\lambda_k$ is the eigenvalue of the operator $-\Delta$ and satisfies $0 < \lambda_1 \leq \lambda_2 \leq \ldots , \lambda_k \rightarrow \infty$ as $k \rightarrow \infty$, and $\varphi_k$ is the eigenfunction corresponding to the value $\lambda_k$ and $\{\varphi_k\}_{k=1}^\infty$ forms an orthonormal basis in $L^2(\Omega)$. We have the asymptotic behavior of the eigenvalue $\lambda_k\sim k^{2/d}$ as $k\to\infty$. Then for $\gamma\in\mathbb R$, the fractional power $(-\Delta )^{\gamma}$ can be defined
\begin{equation}\label{frac-power-define}
  (-\Delta )^{\gamma}\psi:=\sum_{k=1}^{\infty}\lambda_k^{\gamma}(\psi, \varphi _k)\varphi _k,\quad \psi \in D((-\Delta)^\gamma),
\end{equation}
where
$$\mathcal{D}((-\Delta )^{\gamma}):=\left \{ \psi\in L^{2}(\Omega); \sum_{k=1}^{\infty}\left |\lambda_k^{\gamma}(\psi, \varphi _k ) \right |^2<\infty  \right \}.
$$
The space $D((-\Delta)^\gamma)$ is a Hilbert space equipped with the inner product
\begin{equation*}
( \psi, \phi)_{D((-\Delta)^\gamma)}  = \left ((-\Delta)^\gamma \psi, (-\Delta)^\gamma\phi \right )_{L^2(\Omega)}.
\end{equation*}
Moreover, we define the norm
$$
\begin{aligned}
\left \| \psi  \right \|_{\mathcal{D}((-\Delta )^\gamma) }
&= \left ((-\Delta)^\gamma \psi, (-\Delta)^\gamma\psi \right )_{L^2(\Omega)}^{\frac12}
= \left ( \sum_{n=1}^{\infty}\left |\lambda_n^{\gamma}(\psi, \varphi_n )  \right |^2 \right )^\frac{1}{2}.
\end{aligned}
$$
For short, we also denote the inner product $(\cdot,\cdot)_{\mathcal D((-\Delta)^\gamma)}$ and the norm $\|\cdot\|_{\mathcal D((-\Delta)^\gamma)}$ as $(\cdot,\cdot)_\gamma$ and $\|\cdot\|_\gamma$ if no conflict occurs. Furthermore, it satisfies $\mathcal{D}((-\Delta )^\gamma)\subset{H^{2\gamma}(\Omega)}$ for $\gamma>0$. In particular, we have $\mathcal{D}((-\Delta )^\frac{1}{2}) = H_0^1(\Omega)$, $\mathcal{D}((-\Delta )^{-\frac{1}{2}}) = H^{-1}(\Omega)$ and the norm equivalence $\left \| \cdot   \right \| _{\mathcal{D}((-\Delta )^\gamma  ) }\sim \left \| \cdot  \right \| _{H^{2\gamma}(\Omega)}$ with $\gamma=\pm \frac12$.

The regularity of the solution is based on the boundedness of the Mittag-Leffler functions in Lemma \ref{lem-ml-asymp}.
\begin{lemma}(\cite{podlubny1998fract})\label{lem-ml-asymp}
If $0<\alpha<2$, $\nu$ is an arbitrary complex number and $\mu$ is an arbitrary real number such that
$$
  \frac{\pi\alpha}2 <\mu<\min\{\pi,\pi\alpha\},
$$
then
\begin{align*}
|E_{\alpha,\nu}(z)| \le \frac{C}{1+|z|} ,\quad  \mu\le |\arg z|\le\pi,
\end{align*}
where $E_{\alpha,\nu}(z) = \sum_{k=0}^{\infty} \frac{z^k}{\Gamma(\alpha k + \nu)}, ~ z \in \mathbb{C}$.
\end{lemma}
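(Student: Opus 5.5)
The plan follows the classical argument in \cite{podlubny1998fract}: represent $E_{\alpha,\nu}$ by a Hankel-type contour integral, extract the leading term of its asymptotic expansion for large $|z|$ in the sector, and treat small $|z|$ by continuity. Since $E_{\alpha,\nu}$ is entire, it is bounded on the compact set $\{|z|\le R_0\}$ for any fixed $R_0\ge 1$. There $C/(1+|z|)\ge C/(1+R_0)$, so the bound holds after enlarging $C$. Everything therefore reduces to showing $|E_{\alpha,\nu}(z)|\le C|z|^{-1}$ for $|z|\ge R_0$ with $\mu\le|\arg z|\le\pi$.

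\textbf{Choice of contour.} Since $\frac{\pi\alpha}{2}<\mu<\min\{\pi,\pi\alpha\}$, fix an auxiliary angle $\mu'$ with $\frac{\pi\alpha}{2}<\mu'<\mu$. Let $\gamma(\varepsilon,\mu')$ be the contour consisting of the two rays $\arg\zeta=\pm\mu'$, $|\zeta|\ge\varepsilon$, joined by the arc $|\zeta|=\varepsilon$, $|\arg\zeta|\le\mu'$. For $z$ to the left of this contour, i.e.\ $\mu'<|\arg z|\le\pi$ and $|z|>\varepsilon$, one has the representation (first derived for $\nu$ with large real part by expanding the geometric series and using Hankel's formula for $1/\Gamma$, then extended to all complex $\nu$ by analytic continuation in $\nu$)
\[
E_{\alpha,\nu}(z)=\frac{1}{2\pi i\alpha}\int_{\gamma(\varepsilon,\mu')}\frac{\exp(\zeta^{1/\alpha})\,\zeta^{(1-\nu)/\alpha}}{\zeta-z}\,d\zeta .
\]
On the rays, $\arg\zeta^{1/\alpha}=\pm\mu'/\alpha\in(\pi/2,\pi)$. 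Here $\mu'/\alpha>\pi/2$ by the choice of $\mu'$, and $\mu'/\alpha<\mu/\alpha<\pi$ because $\mu<\pi\alpha$. Hence $|\exp(\zeta^{1/\alpha})|$ decays like $\exp(-c|\zeta|^{1/\alpha})$, and all integrals below converge absolutely.

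\textbf{Expansion.} Use the identity
\[
\frac{1}{\zeta-z}=-\frac1z+\frac{\zeta}{z(\zeta-z)} .
\]
The first term produces
\[
-\frac{1}{z}\cdot\frac{1}{2\pi i\alpha}\int_{\gamma}\exp(\zeta^{1/\alpha})\zeta^{(1-\nu)/\alpha}\,d\zeta=-\frac{1}{z\,\Gamma(\nu-\alpha)},
\]
again by Hankel's formula for $1/\Gamma$. This term is bounded by $C|z|^{-1}$, and the bound is valid even when $1/\Gamma(\nu-\alpha)=0$. The remainder is
\[
\frac{1}{z}\cdot\frac{1}{2\pi i\alpha}\int_\gamma\frac{\exp(\zeta^{1/\alpha})\,\zeta^{1+(1-\nu)/\alpha}}{\zeta-z}\,d\zeta .
\]

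\textbf{Main obstacle: a uniform lower bound on $|\zeta-z|$.} We need $|\zeta-z|\ge c\,|z|$ uniformly for $\zeta\in\gamma$ and for $z$ in the closed sector $\mu\le|\arg z|\le\pi$, $|z|\ge R_0$.
\begin{itemize}
\item \emph{Points $\zeta$ on the rays.} Since $|\arg z|-\mu'\ge\mu-\mu'>0$, elementary geometry gives $|\zeta-z|\ge|z|\sin(\mu-\mu')$. This holds when the angular gap between $\zeta$ and $z$ is at most $\pi/2$. When the gap is larger, one has even $|\zeta-z|\ge|z|$.
\item \emph{Points $\zeta$ on the arc.} Here $|\zeta-z|\ge|z|-\varepsilon\ge|z|/2$ once $R_0\ge2\varepsilon$.
\end{itemize}
With this bound the remainder is at most
\[
C|z|^{-2}\int_\gamma|\exp(\zeta^{1/\alpha})|\,|\zeta|^{1+\Re(1-\nu)/\alpha}\,|d\zeta|\le C|z|^{-2},
\]
using the exponential decay on the rays. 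The constant depends only on $\alpha,\nu,\mu$.

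\textbf{Conclusion.} Combining the two terms gives $|E_{\alpha,\nu}(z)|\le C|z|^{-1}\le 2C/(1+|z|)$ for $|z|\ge R_0\ge1$ in the sector. Together with the compact-set bound for $|z|\le R_0$, this proves the lemma.
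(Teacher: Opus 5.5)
Your proposal is correct and is essentially the classical argument of \cite{podlubny1998fract}, which the paper cites without giving a proof of its own. You use the contour representation on $\gamma(\varepsilon,\mu')$ with $\frac{\pi\alpha}{2}<\mu'<\mu$, the first-order expansion $E_{\alpha,\nu}(z)=-\frac{1}{z\,\Gamma(\nu-\alpha)}+O(|z|^{-2})$ in the sector obtained from the uniform bound $|\zeta-z|\ge c|z|$, and boundedness of the entire function on compact sets.

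One small repair is needed in your parenthetical derivation of the representation. Expanding $\frac{1}{\zeta-z}$ as a geometric series requires $|z|<|\zeta|$ on the whole contour, which is a condition on $z$, not on $\nu$, so analytic continuation in $\nu$ does not address it. Instead, either continue analytically in $z$ from the disc $|z|<\varepsilon$, or note that for $|\arg z|>\mu'$ the integral does not depend on the arc radius and take that radius larger than $|z|$.
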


\begin{lemma}\label{lem-reg-L2-2}
If $a_0\in D((-\Delta)^{\gamma_1})$, $\gamma_1\in(\frac{1}{\alpha},1]$ and $a_1,f\in D((-\Delta)^{\gamma_2})$, $\gamma_2\in(0,1]$, then
\[
\|\partial_{t}^mu\|_{L^2} \leq C\big(1+t^{\gamma_1\alpha-m}\big)\|(-\Delta)^{\gamma_1}a_0\|_{L^2}
+C\big(1+t^{\gamma_2\alpha+1-m}\big)\Big(\|(-\Delta)^{\gamma_2}a_1\|_{L^2}+\|(-\Delta)^{\gamma_2}f\|_{L^2}\Big),~m=0,1,2,3.
\]
\end{lemma}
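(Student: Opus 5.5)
We will prove the estimate from the eigenfunction expansion of the solution of \eqref{eq-gov} with source $g(x,t)=t^{1-\alpha}f(x)$. Expanding $u(t)=\sum_k u_k(t)\varphi_k$, each coefficient solves the scalar problem $\partial_t^\alpha u_k+\lambda_k u_k=t^{1-\alpha}f_k$ with $u_k(0)=a_{0,k}$ and $u_k'(0)=a_{1,k}$. By the Laplace transform, and using $\mathcal L[t^{1-\alpha}](s)=\Gamma(2-\alpha)s^{\alpha-2}$, we obtain
\begin{align*}
u_k(t)=a_{0,k}E_{\alpha,1}(-\lambda_k t^\alpha)+a_{1,k}\,tE_{\alpha,2}(-\lambda_k t^\alpha)+\Gamma(2-\alpha)f_k\,tE_{\alpha,2}(-\lambda_k t^\alpha).
\end{align*}
The forcing term therefore has exactly the same structure as the $a_1$ term, with $a_{1,k}$ replaced by $\Gamma(2-\alpha)f_k$. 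This explains why $a_1$ and $f$ enter the statement symmetrically.

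Next we differentiate term by term using the standard identities
\begin{align*}
\frac{d^m}{dt^m}\bigl(t^{\nu-1}E_{\alpha,\nu}(-\lambda t^\alpha)\bigr)=t^{\nu-1-m}E_{\alpha,\nu-m}(-\lambda t^\alpha).
\end{align*}
For the $a_0$ part we take $\nu=1$, giving $t^{-m}E_{\alpha,1-m}(-\lambda_k t^\alpha)$. For the $a_1$ and $f$ parts we take $\nu=2$, giving $t^{1-m}E_{\alpha,2-m}(-\lambda_k t^\alpha)$. Since $\alpha<2$, the argument $-\lambda_k t^\alpha$ lies on the negative real axis, so Lemma \ref{lem-ml-asymp} applies and gives
\begin{align*}
|E_{\alpha,\nu}(-\lambda t^\alpha)|\le \frac{C}{1+\lambda t^\alpha}.
\end{align*}
This holds for every real $\nu$, including $\nu=0,-1,-2$. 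Combining it with the interpolation inequality
\begin{align*}
\frac{1}{1+x}\le \frac{C}{x^{\gamma}},\qquad \gamma\in[0,1],
\end{align*}
yields
\begin{align*}
|E_{\alpha,\nu}(-\lambda_k t^\alpha)|\le C\lambda_k^{\gamma}t^{\gamma\alpha}\lambda_k^{-\gamma}\cdot\lambda_k^{0}\ \Longrightarrow\ \lambda_k^{-\gamma}\text{-weighted bound } C t^{-\gamma\alpha}\lambda_k^{-\gamma}.
\end{align*}
In words: each factor $t^{-\gamma\alpha}$ we pay is compensated by $\lambda_k^{\gamma}$ moved onto the data. Apply this with $\gamma=\gamma_1$ for the $a_0$ term and $\gamma=\gamma_2$ for the $a_1$ and $f$ terms.

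The $a_0$ part needs a refinement, because the naive bound produces $t^{-m-\gamma_1\alpha}$ rather than $t^{\gamma_1\alpha-m}$. For $m\ge1$ we first use $\frac{d}{dt}E_{\alpha,1}(-\lambda t^\alpha)=-\lambda t^{\alpha-1}E_{\alpha,\alpha}(-\lambda t^\alpha)$, which extracts one factor of $\lambda_k$. We then take $m-1$ further derivatives of $t^{\alpha-1}E_{\alpha,\alpha}$, giving the bound
\begin{align*}
\lambda_k t^{\alpha-m}|E_{\alpha,\alpha-m+1}(-\lambda_k t^\alpha)|\le C\lambda_k^{\gamma_1}t^{\gamma_1\alpha-m}.
\end{align*}
This uses $(\lambda t^\alpha)^{1-\gamma_1}/(1+\lambda t^\alpha)\le C$. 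Summing squares against $\|(-\Delta)^{\gamma_1}a_0\|^2$ gives $Ct^{\gamma_1\alpha-m}$, and the $m=0$ case is bounded by a constant. This is the origin of the "$1+$" in the statement. The $a_1$ and $f$ parts are handled in the same way: for $m\ge1$, we first differentiate $tE_{\alpha,2}$ to get $E_{\alpha,1}$, then apply the same extraction, obtaining $C\lambda_k^{\gamma_2}t^{\gamma_2\alpha+1-m}$.

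Finally, Parseval's identity turns the coefficientwise bounds into the stated $L^2$ estimates. Differentiating the series term by term for $t>0$ is justified by the uniform convergence these same bounds provide on $[\epsilon,T]$.

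The main obstacle is bookkeeping the Mittag-Leffler indices. Lemma \ref{lem-ml-asymp} must be applied for the nonpositive indices $\nu=\alpha-m+1$ that arise up to $m=3$, and we must extract exactly the power $\lambda_k^{\gamma}$ with the right time exponent. The restriction $\gamma_1>1/\alpha$ seems unnecessary for this scheme and is presumably needed elsewhere, for example to control $u'(0)$ or the $m=2,3$ terms uniformly. If a case fails, we would handle it by keeping the $\gamma=1$ extraction and using the $\lambda_1$ lower bound for the low modes.
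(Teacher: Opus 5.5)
Your proposal is correct and is essentially the paper's proof: the same Mittag-Leffler eigen-expansion, term-by-term differentiation, the bound $|E_{\alpha,\nu}(-x)|\le C/(1+x)$, and the extraction $\lambda t^{\alpha}/(1+\lambda t^{\alpha})\le C(\lambda t^{\alpha})^{\gamma}$ (the paper's $\sup_n(\lambda_nt^\alpha)^{2-2\gamma}/(1+\lambda_nt^\alpha)^2\le C$), and you are right that $\gamma_1>1/\alpha$ plays no role in this estimate. Drop your preliminary ``interpolation'' display, which runs the wrong way (it gains $\lambda_k^{-\gamma}$ at the price of $t^{-\gamma\alpha}$), and note that for $m\le 1$ the $a_1$, $f$ terms are only bounded (e.g.\ $E_{\alpha,1}(0)=1$), not $O(t^{\gamma_2\alpha+1-m})$ as you claim for $m=1$; the ``$1+$'' in the statement absorbs this.
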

\begin{proof}
The solution to the problem (\ref{eq-gov}) can be expressed as:
\begin{align}\nonumber
u(x,t)=&\sum_{n=1}^{\infty}E_{\alpha,1}(-\lambda_nt^\alpha)(a_0,\varphi_n)\varphi_n(x)\\\label{eq-u-wave-01}
&+\sum_{n=1}^{\infty} t E_{\alpha,2}(-\lambda_n t^\alpha)\Big((a_1, \varphi_n)+\Gamma(2-\alpha)(f, \varphi_n)\Big) \varphi_n(x).
\end{align}
From (\ref{eq-u-wave-01}), it indicates that
\begin{align*} 
&\|u\|_{L^2}^2\\
=&\bigg\|\sum_{n=1}^{\infty}E_{\alpha,1}(-\lambda_nt^\alpha)(a_0,\varphi_n)\varphi_n(x) +\sum_{n=1}^{\infty}t E_{\alpha,2}(-\lambda_n t^\alpha)\Big((a_1, \varphi_n)+\Gamma(2-\alpha)(f, \varphi_n)\Big)\varphi_n(x)\bigg\|_{L^2}^2\\ 
\leq& C\sum_{n=1}^{\infty}E_{\alpha,1}(-\lambda_nt^\alpha)^2|(a_0,\varphi_n)|^2 +C\sum_{n=1}^\infty t^2E_{\alpha,2}(-\lambda_n t^\alpha)^2\Big(|(a_1, \varphi_n)|^2+|(f, \varphi_n)|^2\Big)\\ 
\le& C\|a_0\|_{L^2}^2+Ct\Big(\|a_1\|_{L^2}^2+\|f\|_{L^2}^2\Big). 
\end{align*}

By differentiating $u(x,t)$ in regard to $t$, it holds that
\begin{align*}
&\|\partial_{t}u\|_{L^2}^2\\
=&\bigg\|-\sum_{n=1}^{\infty}\lambda_nt^{\alpha-1}E_{\alpha,\alpha}(-\lambda_nt^\alpha)(a_0,\varphi_n)\varphi_n(x)\\
&-\sum_{n=1}^{\infty} E_{\alpha,1}(-\lambda_n t^\alpha)\Big((a_1, \varphi_n)+\Gamma(2-\alpha)(f, \varphi_n)\Big)  \varphi_n(x)\bigg\|_{L^2}^2\\
\le&C\sum_{n=1}^{\infty}\lambda_n^2t^{2\alpha-2}E_{\alpha,\alpha}(-\lambda_nt^\alpha)^2|(a_0,\varphi_n)|^2
+C\sum_{n=1}^{\infty}E_{\alpha,1}(-\lambda_nt^\alpha)^2\Big(|(a_1, \varphi_n)|^2+|(f, \varphi_n)|^2\Big)\\
=&C\sum_{n=1}^{\infty}(\lambda_nt^\alpha)^{2-2\gamma_1}t^{2\gamma_1\alpha-2}E_{\alpha,\alpha-1}(-\lambda_nt^\alpha)^2\lambda_n^{2\gamma_1}|(a_0,\varphi_n)|^2\\
&+C\sum_{n=1}^{\infty}E_{\alpha,1}(-\lambda_n t^\alpha)^2\Big(|(a_1, \varphi_n)|^2+|(f, \varphi_n)|^2\Big)\\
\le&Ct^{2\gamma_1\alpha-2}\sup_n\frac{(\lambda_nt^\alpha)^{2-2\gamma_1}}{(1+\lambda_nt^\alpha)^2}\sum_{n=1}^{\infty}\lambda_n^{2\gamma_1}|(a_0,\varphi_n)|^2\\
&+C\sum_{n=1}^{\infty}\Big(|(a_1, \varphi_n)|^2+|(f, \varphi_n)|^2\Big)\\
\le&Ct^{2\gamma_1\alpha-2}\|(-\Delta)^{\gamma_1}a_0\|_{L^2}^2 +C\Big(\|a_1\|_{L^2}^2+\|f\|_{L^2}^2\Big),
\end{align*}
where the last inequality holds since $\sup_n\frac{(\lambda_nt^\alpha)^{2-2\gamma_1}}{(1+\lambda_nt^\alpha)^2}<C$.   
Then, we have
\[
\|\partial_{t}u\|_{L^2} \leq Ct^{\gamma_1\alpha-1}\|(-\Delta)^{\gamma_1}a_0\|_{L^2}
+C\Big(\|a_1\|_{L^2}+\|f\|_{L^2}\Big).
\]

Acting derivative with respect to $t$ on $\partial_t u(x,t)$, we arrive at
\begin{align*} 
\|\partial_{tt}u\|_{L^2}^2 =&\bigg\|-\sum_{n=1}^{\infty}\lambda_nt^{\alpha-2}E_{\alpha,\alpha-1}(-\lambda_nt^\alpha)(a_0,\varphi_n)\varphi_n(x)\\ &-\sum_{n=1}^{\infty}\lambda_nt^{\alpha-1} E_{\alpha,\alpha}(-\lambda_n t^\alpha)\Big((a_1, \varphi_n)+\Gamma(2-\alpha)(f, \varphi_n)\Big)  \varphi_n(x)\bigg\|_{L^2}^2\\ \le&C\sum_{n=1}^{\infty}\lambda_n^2t^{2\alpha-4}E_{\alpha,\alpha-1}(-\lambda_nt^\alpha)^2|(a_0,\varphi_n)|^2\\ &+C\sum_{n=1}^{\infty}\lambda_n^2t^{2\alpha-2}E_{\alpha,\alpha}(-\lambda_nt^\alpha)^2\Big(|(a_1, \varphi_n)|^2+|(f, \varphi_n)|^2\Big)\\
=&C\sum_{n=1}^{\infty}(\lambda_nt^\alpha)^{2-2\gamma_1}t^{2\gamma_1\alpha-4}E_{\alpha,\alpha-1}(-\lambda_nt^\alpha)^2\lambda_n^{2\gamma_1}|(a_0,\varphi_n)|^2\\ &+C\sum_{n=1}^{\infty}(\lambda_nt^\alpha)^{2-2\gamma_2}t^{2\gamma_2\alpha-2}E_{\alpha,\alpha}(-\lambda_n t^\alpha)^2\lambda_n^{2\gamma_2}\Big(|(a_1, \varphi_n)|^2+|(f, \varphi_n)|^2\Big)\\ \le&Ct^{2\gamma_1\alpha-4}\sup_n\frac{(\lambda_nt^\alpha)^{2-2\gamma_1}}{(1+\lambda_nt^\alpha)^2}\sum_{n=1}^{\infty}\lambda_n^{2\gamma_1}|(a_0,\varphi_n)|^2\\ &+Ct^{2\gamma_2\alpha-2}\sup_n\frac{(\lambda_nt^\alpha)^{2-2\gamma_2}}{(1+\lambda_nt^\alpha)^2}\sum_{n=1}^{\infty}\lambda_n^{2\gamma_2}\Big(|(a_1, \varphi_n)|^2+|(f, \varphi_n)|^2\Big)\\ \le&Ct^{2\gamma_1\alpha-4}\|(-\Delta)^{\gamma_1}a_0\|_{L^2}^2 +Ct^{2\gamma_2\alpha-2}\Big(\|(-\Delta)^{\gamma_2}a_1\|_{L^2}^2+\|(-\Delta)^{\gamma_2}f\|_{L^2}^2\Big). \end{align*} Then, we have \[ \|\partial_{tt}u\|_{L^2} \leq Ct^{\gamma_1\alpha-2}\|(-\Delta)^{\gamma_1}a_0\|_{L^2} +Ct^{\gamma_2\alpha-1}\Big(\|(-\Delta)^{\gamma_2}a_1\|_{L^2}+\|(-\Delta)^{\gamma_2}f\|_{L^2}\Big). \]

Similarly, we get
\begin{align*}
\|\partial_{ttt}u\|_{L^2}^2
=&\bigg\|-\sum_{n=1}^{\infty}\lambda_nt^{\alpha-3}E_{\alpha,\alpha-2}(-\lambda_nt^\alpha)(a_0,\varphi_n)\varphi_n(x)\\
&-\sum_{n=1}^{\infty} \lambda_nt^{\alpha-2}E_{\alpha,\alpha-1}(-\lambda_n t^\alpha)\Big((a_1, \varphi_n)+\Gamma(2-\alpha)(f, \varphi_n)\Big)  \varphi_n(x)\bigg\|_{L^2}^2\\
\le&C\sum_{n=1}^{\infty}(\lambda_nt^\alpha)^{2-2\gamma_1}t^{2\gamma_1\alpha-6}E_{\alpha,\alpha-2}(-\lambda_nt^\alpha)^2\lambda_n^{2\gamma_1}|(a_0,\varphi_n)|^2\\
&+C\sum_{n=1}^{\infty}(\lambda_nt^\alpha)^{2-2\gamma_2}t^{2\gamma_2\alpha-4}E_{\alpha,\alpha-1}(-\lambda_n t^\alpha)^2\lambda_n^{2\gamma_2}\Big(|(a_1, \varphi_n)|^2+|(f, \varphi_n)|^2\Big)\\
\le&Ct^{2\gamma_1\alpha-6}\sup_n(\lambda_nt^\alpha)^{2-2\gamma_1}E_{\alpha,\alpha-2}(-\lambda_n t^\alpha)^2\sum_{n=1}^{\infty}\lambda_n^{2\gamma_1}|(a_0,\varphi_n)|^2\\
&+Ct^{2\gamma_2\alpha-4}\sup_n(\lambda_nt^\alpha)^{2-2\gamma_2}E_{\alpha,\alpha-1}(-\lambda_n t^\alpha)^2\sum_{n=1}^{\infty}\lambda_n^{2\gamma_2}\Big(|(a_1, \varphi_n)|^2+|(f, \varphi_n)|^2\Big)\\
\le& Ct^{2\gamma_1\alpha-6}\|(-\Delta)^{\gamma_1}a_0\|_{L^2}^2 +Ct^{2\gamma_2\alpha-4}\Big(\|(-\Delta)^{\gamma_2}a_1\|_{L^2}^2+\|(-\Delta)^{\gamma_2}f\|_{L^2}^2\Big).
\end{align*}
Collecting all the above estimates, we finish the proof of the lemma.
\end{proof}

\begin{lemma}\label{thm-reg-delta-2}
If $a_0\in D((-\Delta)^{\gamma_1+\epsilon})$, $\gamma_1\in(\frac{1}{\alpha}+\frac14,\frac32]$ and $a_1,f\in D((-\Delta)^{\gamma_2+\epsilon})$, $\gamma_2\in(\frac{1}{4},1]$ and $0<\epsilon \ll 1$, then for $m=0,1,2,3,$
\[
\|\partial_t^m(-\Delta)^{\frac12}u\|_{L^2}\le C(1+t^{1-m-\frac{\alpha}{4}})\Big(\|(-\Delta)^{\gamma_1+\epsilon}a_0\|_{L^2}
+\|(-\Delta)^{\gamma_2+\epsilon}a_1\|_{L^2}+\|(-\Delta)^{\gamma_2+\epsilon}f\|_{L^2}\Big).
\]
\end{lemma}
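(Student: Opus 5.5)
We follow the proof of Lemma \ref{lem-reg-L2-2}, now applying $(-\Delta)^{\frac12}$ to the eigenfunction expansion \eqref{eq-u-wave-01}. Acting with $\partial_t^m(-\Delta)^{\frac12}$ termwise gives two series. The first is
\[\sum_n\lambda_n^{\frac12}\partial_t^m E_{\alpha,1}(-\lambda_nt^\alpha)(a_0,\varphi_n)\varphi_n,\]
and the second is
\[\sum_n\lambda_n^{\frac12}\partial_t^m\big(tE_{\alpha,2}(-\lambda_nt^\alpha)\big)\big((a_1,\varphi_n)+\Gamma(2-\alpha)(f,\varphi_n)\big)\varphi_n.\]
We compute the derivatives from the standard identities $\frac{d}{dt}E_{\alpha,1}(-\lambda t^\alpha)=-\lambda t^{\alpha-1}E_{\alpha,\alpha}(-\lambda t^\alpha)$ and $\frac{d}{dt}\big(t^{\nu-1}E_{\alpha,\nu}(-\lambda t^\alpha)\big)=t^{\nu-2}E_{\alpha,\nu-1}(-\lambda t^\alpha)$. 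The $m$-th derivative of each mode is then a power of $t$ times a Mittag-Leffler function, and Lemma \ref{lem-ml-asymp} bounds the latter by $C/(1+\lambda_nt^\alpha)$. By Parseval, it suffices to bound each mode's coefficient by $C(1+t^{1-m-\alpha/4})$ times $\lambda_n^{\gamma_i+\epsilon}|(\cdot,\varphi_n)|$.

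\textbf{Key step (scaling trick).} For $m\ge1$ we split $\lambda_n^{\frac12}\cdot\lambda_n t^{\alpha-m}$ (the $a_0$ part) as
\[(\lambda_nt^\alpha)^{\frac32-\gamma_1-\epsilon}\,t^{(\gamma_1+\epsilon-\frac12)\alpha-m}\,\lambda_n^{\gamma_1+\epsilon}.\]
The factor $(\lambda_nt^\alpha)^{\theta}/(1+\lambda_nt^\alpha)$ is bounded uniformly as long as $0\le\theta\le1$, which holds because $\gamma_1+\epsilon\ge\frac12$. This leaves $t^{(\gamma_1+\epsilon-\frac12)\alpha-m}$. The hypothesis $\gamma_1>\frac1\alpha+\frac14$ gives $(\gamma_1-\frac12)\alpha>1-\frac\alpha4$, so this power is dominated by $C(1+t^{1-m-\alpha/4})$ on $(0,T]$. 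The $a_1,f$ part is treated the same way. For $m\ge1$ its mode is $\lambda_n^{\frac12}\lambda_nt^{\alpha+1-m}E_{\alpha,\alpha+2-m}$ after differentiating, up to a lower-order first term when $m=1$, namely $E_{\alpha,1}$. We write it as
\[(\lambda_nt^\alpha)^{\frac32-\gamma_2-\epsilon}\,t^{(\gamma_2+\epsilon-\frac12)\alpha+1-m}\,\lambda_n^{\gamma_2+\epsilon}.\]
When $\gamma_2+\epsilon<\frac12$ the exponent $\frac32-\gamma_2-\epsilon$ exceeds $1$, and the decay $1/(1+\lambda_nt^\alpha)$ is not enough. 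In that case we do not pull out a full $\lambda_n$: for $m=1$ we simply use $\lambda_n^{\frac12}|E_{\alpha,1}|\le C(\lambda_nt^\alpha)^{\frac12-\gamma_2-\epsilon}t^{(\gamma_2+\epsilon-\frac12)\alpha}\lambda_n^{\gamma_2+\epsilon}/(1+\lambda_nt^\alpha)$. Since $\gamma_2>\frac14$, the resulting power $t^{(\gamma_2-\frac12)\alpha}$ is at least $t^{-\alpha/4}$, which is dominated by $C(1+t^{-\alpha/4})$. The case $m=0$ is direct. For $m=2,3$, the extra factors $t^{-1}$ coming from the Mittag-Leffler index shift produce exactly the $t^{1-m-\alpha/4}$ scaling.

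\textbf{Main obstacle.} We expect the hardest part to be the cases $m=2,3$. There the index $\nu=\alpha-1$ or $\alpha-2$ may be nonpositive, so we must check that Lemma \ref{lem-ml-asymp} still applies with $\nu$ an arbitrary complex number. This is needed to get a uniform bound for $(\lambda_nt^\alpha)^\theta|E_{\alpha,\nu}(-\lambda_nt^\alpha)|$. Since Lemma \ref{lem-ml-asymp} only gives decay $1/(1+|z|)$, the admissible range $\theta\in[0,1]$ forces the extra $\epsilon$ in the hypotheses, which we use to stay strictly inside that range at the endpoint cases. After bounding each term, we sum the squares, take square roots, and collect the bounds to conclude.
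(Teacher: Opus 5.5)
Your proposal is correct and follows the paper's argument: the eigenfunction expansion, the index-shift identity $\frac{d}{dt}\big(t^{\nu-1}E_{\alpha,\nu}(-\lambda t^\alpha)\big)=t^{\nu-2}E_{\alpha,\nu-1}(-\lambda t^\alpha)$, Lemma~\ref{lem-ml-asymp} applied with nonpositive $\nu$, and the bound $\sup_{x\ge0}x^{\theta}/(1+x)<\infty$ for $\theta\in[0,1]$. The paper avoids your case split on $\gamma_2+\epsilon$ by always writing the $a_1,f$ modes as $\lambda_n^{1/2}t^{1-m}E_{\alpha,2-m}(-\lambda_nt^\alpha)$ and extracting only the critical powers $\lambda_n^{1/4}$ and $\lambda_n^{1/\alpha+1/4}$, which is exactly the form your terse $m=2,3$ step needs; also, $\epsilon$ plays no role, since the endpoints $\theta=0,1$ are admissible and the lower bounds on $\gamma_1,\gamma_2$ are already strict.
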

\begin{proof}
For $\xi_2\in(0,1]$, it gives that
\begin{align*}
&\|(-\Delta)^{\frac12}u\|_{L^2}^2\\
=&\bigg\|\sum_{n=1}^{\infty}\lambda_n^{\frac12}E_{\alpha,1}(-\lambda_nt^\alpha)(a_0,\varphi_n)\varphi_n(x)
+\sum_{n=1}^{\infty}\lambda_n^{\frac12} t E_{\alpha,2}(-\lambda_n t^\alpha)\Big((a_1, \varphi_n)+\Gamma(2-\alpha)(f, \varphi_n)\Big)\varphi_n(x)\bigg\|_{L^2}^2\\
\leq& C\sum_{n=1}^{\infty}\lambda_nE_{\alpha,1}(-\lambda_nt^\alpha)^2|(a_0,\varphi_n)|^2
+C\sum_{n=1}^\infty\lambda_nt^2E_{\alpha,2}(-\lambda_n t^\alpha)^2\Big(|(a_1, \varphi_n)|^2+|(f, \varphi_n)|^2\Big)\\
=&C\sum_{n=1}^{\infty}\lambda_nE_{\alpha,1}(-\lambda_nt^\alpha)^2|(a_0,\varphi_n)|^2
+C\sum_{n=1}^\infty t^{\alpha(2\xi_2-2)+2}(\lambda_nt^\alpha)^{2-2\xi_2}E_{\alpha,2}(-\lambda_n t^\alpha)^2\lambda_n^{2\xi_2-1}\Big(|(a_1, \varphi_n)|^2+|(f, \varphi_n)|^2\Big)\\
\le&C\sum_{n=1}^{\infty}\lambda_nE_{\alpha,1}(-\lambda_nt^\alpha)^2|(a_0,\varphi_n)|^2
+C\sup_n\frac{(\lambda_nt^\alpha)^{2-2\xi_2}}{(1+\lambda_nt^\alpha)^2}t^{\alpha(2\xi_2-2)+2}\sum_{n=1}^\infty \lambda_n^{2\xi_2-1}\Big(|(a_1, \varphi_n)|^2+|(f, \varphi_n)|^2\Big)\\
\le& C\|(-\Delta)^{\frac12}a_0\|_{L^2}^2+Ct^{\alpha(2\xi_2-2)+2}
\Big(\|(-\Delta)^{\xi_2-\frac12}a_1\|_{L^2}^2+\|(-\Delta)^{\xi_2-\frac12}f\|_{L^2}^2\Big),
\end{align*}
where the last inequality holds since $\sup_n\frac{(\lambda_nt^\alpha)^{2-2\xi_2}}{(1+\lambda_nt^\alpha)^2}<C$.
We take  $\xi_2=\frac34\leq\gamma_2+\epsilon+\frac12$, it arrives that
\begin{align*}
\|(-\Delta)^{\frac12}u\|_{L^2}
&\le C\|(-\Delta)^{\frac12}a_0\|_{L^2}+ Ct^{1-\frac{\alpha}{4}}\Big(\|(-\Delta)^{\gamma_2+\epsilon}a_1\|_{L^2}+\|(-\Delta)^{\gamma_2+\epsilon}f\|_{L^2}\Big).
\end{align*}

Similarly, for $\xi_1,\xi_2\in(0,1]$, it gives that
\begin{align*}
&\|\partial_t(-\Delta)^{\frac12}u\|_{L^2}^2\\
=&\bigg\|\sum_{n=1}^{\infty}-\lambda_n^{\frac32}t^{\alpha-1}E_{\alpha,\alpha}(-\lambda_nt^\alpha)(a_0,\varphi_n)\varphi_n(x)
+\sum_{n=1}^{\infty}\lambda_n^{\frac12}E_{\alpha,1}(-\lambda_n t^\alpha)\Big((a_1, \varphi_n)+\Gamma(2-\alpha)(f, \varphi_n)\Big)\varphi_n(x)\bigg\|_{L^2}^2\\
\leq& C\sum_{n=1}^{\infty}\lambda_n^3t^{2\alpha-2}E_{\alpha,\alpha}(-\lambda_nt^\alpha)^2|(a_0,\varphi_n)|^2
+C\sum_{n=1}^\infty\lambda_nE_{\alpha,1}(-\lambda_n t^\alpha)^2\Big(|(a_1, \varphi_n)|^2+|(f, \varphi_n)|^2\Big)\\
=&C\sum_{n=1}^\infty t^{2\xi_1\alpha-2}(\lambda_nt^\alpha)^{2-2\xi_1}E_{\alpha,\alpha}(-\lambda_n t^\alpha)^2\lambda_n^{2\xi_1+1}|(a_0, \varphi_n)|^2\\
&+C\sum_{n=1}^\infty t^{\alpha(2\xi_2-2)}(\lambda_nt^\alpha)^{2-2\xi_2}E_{\alpha,1}(-\lambda_n t^\alpha)^2\lambda_n^{2\xi_2-1}\Big(|(a_1, \varphi_n)|^2+|(f, \varphi_n)|^2\Big)\\
\le&C\sup_n\frac{(\lambda_nt^\alpha)^{2-2\xi_1}}{(1+\lambda_nt^\alpha)^2}t^{2\xi_1\alpha-2}\sum_{n=1}^\infty \lambda_n^{2\xi_1+1}|(a_0, \varphi_n)|^2\\
&+C\sup_n\frac{(\lambda_nt^\alpha)^{2-2\xi_2}}{(1+\lambda_nt^\alpha)^2}t^{\alpha(2\xi_2-2)}\sum_{n=1}^\infty \lambda_n^{2\xi_2-1}\Big(|(a_1, \varphi_n)|^2+|(f, \varphi_n)|^2\Big)\\
\le& Ct^{2\xi_1\alpha-2}\|(-\Delta)^{\xi_1+\frac12}a_0\|_{L^2}^2+Ct^{\alpha(2\xi_2-2)}
\Big(\|(-\Delta)^{\xi_2-\frac12}a_1\|_{L^2}^2+\|(-\Delta)^{\xi_2-\frac12}f\|_{L^2}^2\Big).
\end{align*}
Taking $\xi_1=\frac{1}{\alpha}-\frac14\le\gamma_1+\epsilon-\frac12$ and $\xi_2=\frac34\leq\gamma_2+\epsilon+\frac12$, one has $\|\partial_t(-\Delta)^{\frac12}u\|_{L^2}\leq Ct^{-\frac{\alpha}{4}}\Big(\|(-\Delta)^{\gamma_1+\epsilon}a_0\|_{L^2}
+\|(-\Delta)^{\gamma_2+\epsilon}a_1\|_{L^2}+\|(-\Delta)^{\gamma_2+\epsilon}f\|_{L^2}\Big)$.

Some properties of $E_{\alpha,\nu}(-\lambda_nt^\alpha)$, $\alpha>0$, $\nu\in\mathbb{R}$, are needed to deduce the estimates of $\|\partial_t^m(-\Delta)^{\frac12}u\|_{L^2}$, $m=2,3$,
\begin{align*}
\partial_t[tE_{\alpha,1}(-\lambda_n t^\alpha)]
&= \partial_t\bigg[t\sum_{k=0}^\infty\frac{(-\lambda_nt^\alpha)^k}{\Gamma{(k\alpha+1)}}\bigg]\\
&= \partial_t\bigg[t+\sum_{k=1}^\infty\frac{(-\lambda_n)^kt^{k\alpha+1}}{\Gamma{(k\alpha+1)}}\bigg]\\
&= 1+\sum_{k=1}^\infty\frac{(-\lambda_nt^{\alpha})^k}{\Gamma{(k\alpha+1)}}+\sum_{k=1}^\infty\frac{(-\lambda_nt^{\alpha})^k}{\Gamma{(k\alpha)}}\\
&:=E_{\alpha,1}(-\lambda_nt^\alpha)+E_{\alpha,0}(-\lambda_nt^\alpha),
\end{align*}
and we get
\begin{align*}
\partial_t[E_{\alpha,1}(-\lambda_n t^\alpha)]
&=\partial_t[t^{-1}tE_{\alpha,1}(-\lambda_n t^\alpha)]\\
&=-t^{-1}E_{\alpha,1}(-\lambda_n t^\alpha)+t^{-1}\partial_t[tE_{\alpha,1}(-\lambda_n t^\alpha)]\\
&=-t^{-1}E_{\alpha,1}(-\lambda_n t^\alpha)+t^{-1}\big(E_{\alpha,1}(-\lambda_nt^\alpha)+E_{\alpha,0}(-\lambda_nt^\alpha)\big)\\
&=t^{-1}E_{\alpha,0}(-\lambda_nt^\alpha).
\end{align*}
Based on above results, it holds that
\begin{align*}
&\|\partial_t^2(-\Delta)^{\frac12}u\|_{L^2}^2\\
=&\bigg\|\sum_{n=1}^{\infty}-\lambda_n^{\frac32}t^{\alpha-2}E_{\alpha,\alpha-1}(-\lambda_nt^\alpha)(a_0,\varphi_n)\varphi_n(x)
+\sum_{n=1}^{\infty} \lambda_n^{\frac12}t^{-1}E_{\alpha,0}(-\lambda_n t^\alpha)\Big((a_1, \varphi_n)+\Gamma(2-\alpha)(f, \varphi_n)\Big)\varphi_n(x)\bigg\|_{L^2}^2\\
\leq& C\sum_{n=1}^{\infty}\lambda_n^3t^{2\alpha-4}E_{\alpha,\alpha-1}(-\lambda_nt^\alpha)^2|(a_0,\varphi_n)|^2
+C\sum_{n=1}^\infty\lambda_nt^{-2}E_{\alpha,0}(-\lambda_n t^\alpha)^2\Big(|(a_1, \varphi_n)|^2+|(f, \varphi_n)|^2\Big)\\
=&C\sum_{n=1}^\infty t^{\alpha(2\xi_2-2)-2}(\lambda_nt^\alpha)^{2-2\xi_2}E_{\alpha,0}(-\lambda_n t^\alpha)^2\lambda_n^{2\xi_2-1}\Big(|(a_1, \varphi_n)|^2+|(f, \varphi_n)|^2\Big)\\
\le&C\sup_n\frac{(\lambda_nt^\alpha)^{2-2\xi_1}}{(1+\lambda_nt^\alpha)^2}t^{2\xi_1-4}\sum_{n=1}^\infty \lambda_n^{2\xi_3+1}|(a_0, \varphi_n)|^2\\
&+C\sup_n\frac{(\lambda_nt^\alpha)^{2-2\xi_2}}{(1+\lambda_nt^\alpha)^2}t^{\alpha(2\xi_2-2)-2}\sum_{n=1}^\infty \lambda_n^{2\xi_2-1}\Big(|(a_1, \varphi_n)|^2+|(f, \varphi_n)|^2\Big)\\
\le& Ct^{2\xi_1\alpha-4}\|(-\Delta)^{\xi_1+\frac12}a_0\|_{L^2}^2+ Ct^{\alpha(2\xi_2-2)-2}\Big(\|(-\Delta)^{\xi_2-\frac12}a_1\|_{L^2}^2+\|(-\Delta)^{\xi_2-\frac12}f\|_{L^2}^2\Big),
\end{align*}
where $\xi_1,\xi_2\in(0,1]$, taking $\xi_1=\frac{1}{\alpha}-\frac14\le\gamma_1+\epsilon-\frac12$ and $\xi_2=\frac34\leq\gamma_2+\epsilon+\frac12$, i.e.
\begin{align*}
\|\partial_t^2(-\Delta)^{\frac12}u\|_{L^2}
&\le Ct^{-\frac{\alpha}{4}-1}\Big(\|(-\Delta)^{\gamma_1+\epsilon}a_0\|_{L^2}
+\|(-\Delta)^{\gamma_2+\epsilon}a_1\|_{L^2}+\|(-\Delta)^{\gamma_2+\epsilon}f\|_{L^2}\Big).
\end{align*}

In a similar fashion, we get
\begin{align*}
\|\partial_t^3(-\Delta)^{\frac12}u\|_{L^2}
&\le Ct^{-\frac{\alpha}{4}-2}\Big(\|(-\Delta)^{\gamma_1+\epsilon}a_0\|_{L^2}
+\|(-\Delta)^{\gamma_2+\epsilon}a_1\|_{L^2}+\|(-\Delta)^{\gamma_2+\epsilon}f\|_{L^2}\Big).
\end{align*}
The proof is completed.
\end{proof}

\begin{lemma}\label{thm-reg-delta} If $a_0\in D((-\Delta)^{\gamma_1+\epsilon+\frac12})$, $\gamma_1\in(\frac{1}{\alpha}+\frac14,\frac32]$ and $a_1,f\in D((-\Delta)^{\gamma_2+\epsilon+\frac12})$, $\gamma_2\in(\frac{1}{4},1]$ and $0<\epsilon \ll 1$, then for $m=0,1,2,3,$
\[
\|\partial_t^m(-\Delta)u\|_{L^2}\le C(1+t^{1-m-\frac{\alpha}{4}})\Big(\|(-\Delta)^{\gamma_1+\epsilon+\frac12}a_0\|_{L^2}
+\|(-\Delta)^{\gamma_2+\epsilon+\frac12}a_1\|_{L^2}+\|(-\Delta)^{\gamma_2+\epsilon+\frac12}f\|_{L^2}\Big).
\]
\end{lemma}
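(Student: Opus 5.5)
The plan is to reduce Lemma \ref{thm-reg-delta} to Lemma \ref{thm-reg-delta-2}. The operator $(-\Delta)^{\frac12}$ commutes with everything in the eigenfunction representation (\ref{eq-u-wave-01}). So we observe that
\[
(-\Delta)u=(-\Delta)^{\frac12}\tilde u,\qquad \tilde u:=(-\Delta)^{\frac12}u .
\]
Here $\tilde u$ is given by the same series (\ref{eq-u-wave-01}), with $(a_0,\varphi_n)$, $(a_1,\varphi_n)$, $(f,\varphi_n)$ replaced by $\lambda_n^{\frac12}(a_0,\varphi_n)$, $\lambda_n^{\frac12}(a_1,\varphi_n)$, $\lambda_n^{\frac12}(f,\varphi_n)$. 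In other words, $\tilde u$ is the solution of (\ref{eq-gov}) with data $\tilde a_0=(-\Delta)^{\frac12}a_0$, $\tilde a_1=(-\Delta)^{\frac12}a_1$, $\tilde f=(-\Delta)^{\frac12}f$. Since the series is linear in the data, this identification is exact; no PDE well-posedness argument is needed.

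The key step is checking the hypotheses. From $a_0\in D((-\Delta)^{\gamma_1+\epsilon+\frac12})$ we get $\tilde a_0\in D((-\Delta)^{\gamma_1+\epsilon})$ with $\|(-\Delta)^{\gamma_1+\epsilon}\tilde a_0\|_{L^2}=\|(-\Delta)^{\gamma_1+\epsilon+\frac12}a_0\|_{L^2}$. The same holds for $\tilde a_1$ and $\tilde f$ with $\gamma_2$. The ranges $\gamma_1\in(\frac1\alpha+\frac14,\frac32]$ and $\gamma_2\in(\frac14,1]$ are unchanged. Applying Lemma \ref{thm-reg-delta-2} to $\tilde u$ for $m=0,1,2,3$ then gives
\[
\|\partial_t^m(-\Delta)u\|_{L^2}=\|\partial_t^m(-\Delta)^{\frac12}\tilde u\|_{L^2}\le C(1+t^{1-m-\frac\alpha4})\Big(\|(-\Delta)^{\gamma_1+\epsilon+\frac12}a_0\|_{L^2}+\|(-\Delta)^{\gamma_2+\epsilon+\frac12}a_1\|_{L^2}+\|(-\Delta)^{\gamma_2+\epsilon+\frac12}f\|_{L^2}\Big).
\]

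The only point needing care is justifying that $\partial_t^m$ commutes with $(-\Delta)^{\frac12}$ and with the infinite sum. I would handle this through termwise differentiation, using the uniform bounds of Lemma \ref{lem-ml-asymp} on compact subsets of $(0,T]$. This is the same termwise manipulation already carried out in the proof of Lemma \ref{thm-reg-delta-2}. Consequently the result there holds verbatim for the shifted data, and I expect no genuine obstacle.

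As a fallback, if one prefers a self-contained argument, I would repeat the computations of Lemma \ref{thm-reg-delta-2} with $\lambda_n^{\frac12}$ replaced by $\lambda_n$ throughout. That means the same splitting $(\lambda_nt^\alpha)^{2-2\xi}$ with $\xi_1=\frac1\alpha-\frac14$ and $\xi_2=\frac34$, and the bound $\sup_n (\lambda_nt^\alpha)^{2-2\xi}/(1+\lambda_nt^\alpha)^2<C$. Every power of $\lambda_n$ on the data then simply increases by one half.
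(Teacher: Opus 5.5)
Your proposal is correct and is essentially the paper's argument. The paper's proof is the single remark that the result follows ``following the idea'' of Lemma \ref{thm-reg-delta-2}, which is exactly your fallback of rerunning those computations with one extra factor $\lambda_n^{1/2}$. Your main route applies Lemma \ref{thm-reg-delta-2} verbatim to $\tilde u=(-\Delta)^{\frac12}u$ with data $(-\Delta)^{\frac12}a_0$, $(-\Delta)^{\frac12}a_1$, $(-\Delta)^{\frac12}f$. That is a cleaner packaging of the same calculation, and it is valid because that lemma's proof uses the data only through their eigen-coefficients in (\ref{eq-u-wave-01}).
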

\begin{proof}
The desired results are directly obtained following the idea in Lemma \ref{thm-reg-delta-2}.    
\end{proof}

\begin{lemma}\label{thm-reg-L2-v}
If $a_0\in D((-\Delta)^{\gamma})$, $\gamma\in(\frac{1}{\alpha},1]$ and $a_1,f\in L^2(\Omega)$, then
\[
\|\partial_t^mv\|_{L^2}\le Ct^{\gamma\alpha-\beta-m} \|(-\Delta)^{\gamma}a_0\|_{L^2}+Ct^{1-\beta-m}\Big(\|a_1\|_{L^2}+\|f\|_{L^2}\Big),~~m=0,1,2,3.
\]
\end{lemma}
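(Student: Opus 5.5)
Throughout, $v=\partial_t^\beta u$ with $\beta=\alpha/2$. The plan is to compute $v$ explicitly from the eigenfunction expansion (\ref{eq-u-wave-01}) and then repeat the $\sup_n$ argument of Lemma \ref{lem-reg-L2-2}.

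\emph{Step 1: series for $v$.} We use the standard identity
\[
\partial_t^\beta\big[t^{\mu-1}E_{\alpha,\mu}(-\lambda t^\alpha)\big]=t^{\mu-1-\beta}E_{\alpha,\mu-\beta}(-\lambda t^\alpha),
\]
valid for the Caputo derivative whenever the function vanishes at $t=0$ or the constant term is handled separately. For the $a_0$-part, $E_{\alpha,1}(-\lambda_nt^\alpha)-1$ has no constant term, so
\[
\partial_t^\beta E_{\alpha,1}(-\lambda_nt^\alpha)=-\lambda_n t^{\alpha-\beta}E_{\alpha,\alpha+1-\beta}(-\lambda_nt^\alpha).
\]
This follows termwise from $\partial_t^\beta t^{k\alpha}=\frac{\Gamma(k\alpha+1)}{\Gamma(k\alpha+1-\beta)}t^{k\alpha-\beta}$ for $k\ge1$. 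For the $a_1,f$-part, $tE_{\alpha,2}$ vanishes at $0$, so
\[
\partial_t^\beta[tE_{\alpha,2}(-\lambda_nt^\alpha)]=t^{1-\beta}E_{\alpha,2-\beta}(-\lambda_nt^\alpha).
\]
Hence
\[
v=-\sum_n\lambda_nt^{\alpha-\beta}E_{\alpha,\alpha+1-\beta}(-\lambda_nt^\alpha)(a_0,\varphi_n)\varphi_n+\sum_n t^{1-\beta}E_{\alpha,2-\beta}(-\lambda_nt^\alpha)\big((a_1,\varphi_n)+\Gamma(2-\alpha)(f,\varphi_n)\big)\varphi_n .
\]
Termwise differentiation is justified by uniform convergence on compact subsets of $(0,T]$, using Lemma \ref{lem-ml-asymp}.

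\emph{Step 2: time derivatives.} These come from $\frac{d}{dt}[t^{\mu-1}E_{\alpha,\mu}(-\lambda t^\alpha)]=t^{\mu-2}E_{\alpha,\mu-1}(-\lambda t^\alpha)$, applied $m$ times. This gives, for the two parts respectively,
\[
\partial_t^m v=-\sum_n\lambda_nt^{\alpha-\beta-m}E_{\alpha,\alpha+1-\beta-m}(-\lambda_nt^\alpha)(a_0,\varphi_n)\varphi_n+\sum_n t^{1-\beta-m}E_{\alpha,2-\beta-m}(-\lambda_nt^\alpha)\big((a_1,\varphi_n)+\Gamma(2-\alpha)(f,\varphi_n)\big)\varphi_n .
\]
Some second indices $\nu$ become nonpositive, as with $E_{\alpha,0}$ in Lemma \ref{thm-reg-delta-2}. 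Lemma \ref{lem-ml-asymp} holds for arbitrary $\nu$ (with the convention $1/\Gamma$ of a nonpositive integer $=0$), so $|E_{\alpha,\nu}(-x)|\le C/(1+x)$ for $x\ge0$ in all cases.

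\emph{Step 3: estimates.} For the $(a_1,f)$-part, Parseval and the bound $|E|\le C$ give directly
\[
Ct^{1-\beta-m}\big(\|a_1\|_{L^2}+\|f\|_{L^2}\big).
\]
For the $a_0$-part we write
\[
\lambda_n t^{\alpha-\beta-m}=(\lambda_nt^\alpha)^{1-\gamma}\,t^{\gamma\alpha-\beta-m}\,\lambda_n^\gamma
\]
and use $\sup_{x\ge0}x^{1-\gamma}/(1+x)\le C$, valid since $\gamma\le1$. This yields
\[
Ct^{\gamma\alpha-\beta-m}\|(-\Delta)^\gamma a_0\|_{L^2},
\]
and squaring, summing and taking square roots finishes the proof. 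The lower bound $\gamma>1/\alpha$ is not needed for this computation; it is inherited from the hypotheses ensuring $u\in C^1$ for Lemma \ref{lem-sym}.

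The main obstacle is Step 1: rigorously commuting $\partial_t^\beta$ with the infinite sum, and treating the constant term of $E_{\alpha,1}$ correctly. I would handle this by verifying the identity on partial sums, where it is exact, and passing to the limit. The limit passage uses the $L^2$-convergence of the derivative series of $u$ from Lemma \ref{lem-reg-L2-2} together with boundedness of $\mathcal I^{1-\beta}$ on $L^1(0,T;L^2)$. The bounds $\|\partial_tu\|\le C(t^{\gamma\alpha-1}+1)$ there make $\partial_tu$ integrable.
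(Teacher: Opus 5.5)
Your proposal is correct and follows essentially the same route as the paper. The paper also writes $v$ as the series with terms $-\lambda_n t^{\alpha-\beta}E_{\alpha,\alpha+1-\beta}(-\lambda_nt^\alpha)$ and $t^{1-\beta}E_{\alpha,2-\beta}(-\lambda_nt^\alpha)$, differentiates termwise to get the second indices $\alpha+1-\beta-m$ and $2-\beta-m$, and bounds the result with $\sup_n(\lambda_nt^\alpha)^{2-2\gamma}/(1+\lambda_nt^\alpha)^2$ and the boundedness of the Mittag-Leffler functions. Your extra care (the partial-sum justification for commuting $\partial_t^\beta$ with the series, the treatment of nonpositive second indices, keeping the factor $\Gamma(2-\alpha)$, and noting that $\gamma>1/\alpha$ is unused) just makes explicit what the paper leaves implicit.
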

\begin{proof}
From (\ref{eq-u-wave-01}), indicates that
\begin{align*}
\|v\|_{L^2}^2
=&\bigg\|\frac{1}{\Gamma{(1-\beta)}}\int_0^t(t-s)^{-\beta}\partial_su(s)ds\bigg\|_{L^2}^2\\
=&\bigg\|\frac{1}{\Gamma{(1-\beta)}}\sum_{n=1}^{\infty}  \int_0^t(t-s)^{-\beta}\bigg(-\lambda_ns^{\alpha-1}E_{\alpha,\alpha}(-\lambda_ns^\alpha)(a_0,\varphi_n)\\
&+E_{\alpha,1}(-\lambda_n s^\alpha)\Big((a_1, \varphi_n)+\Gamma(2-\alpha)(f, \varphi_n)\Big)\bigg)ds\varphi_n(x)\bigg\|_{L^2}^2\\
=&\bigg\|-\sum_{n=1}^{\infty}\lambda_nt^{\alpha-\beta}E_{\alpha,\alpha+1-\beta}(-\lambda_n t^\alpha)(a_0, \varphi_n)\varphi_n(x)
+\sum_{n=1}^{\infty}t^{1-\beta}E_{\alpha,2-\beta}(-\lambda_n t^\alpha)\Big((a_1, \varphi_n)+(f, \varphi_n)\Big)\varphi_n(x)\bigg\|_{L^2}^2\\
\le&C\sum_{n=1}^{\infty}\lambda_n^2t^{2(\alpha-\beta)}E_{\alpha,\alpha+1-\beta}(-\lambda_n t^\alpha)^2|(a_0, \varphi_n)|^2
+C\sum_{n=1}^{\infty}t^{2-2\beta}E_{\alpha,2-\beta}(-\lambda_n t^\alpha)^2\Big(|(a_1, \varphi_n)|^2+|(f, \varphi_n)|^2\Big)\\
\le&Ct^{2\gamma\alpha-2\beta}\sup_n\frac{(\lambda_nt^\alpha)^{2-2\gamma}}{(1+\lambda_nt^\alpha)^2}\sum_{n=1}^\infty \lambda_n^{2\gamma}|(a_0, \varphi_n)|^2
+t^{2-2\beta}\sup_nE_{\alpha,2-\beta}(-\lambda_n t^\alpha)^2\sum_{n=1}^{\infty}\Big(|(a_1, \varphi_n)|^2+|(f, \varphi_n)|^2\Big)\\
\le&Ct^{2(\gamma\alpha-\beta)} \|(-\Delta)^{\gamma}a_0\|_{L^2}^2+Ct^{2(1-\beta)}\Big(\|a_1\|_{L^2}^2+\|f\|_{L^2}^2\Big).
\end{align*}

Similarly, we get
\begin{align*}
\|\partial_t^mv\|_{L^2}^2
=&\bigg\|\sum_{n=1}^{\infty}-\lambda_nt^{\alpha-\beta-m}E_{\alpha,\alpha+1-\beta-m}(-\lambda_n t^\alpha)(a_0, \varphi_n)\varphi_n(x)\\
&+\sum_{n=1}^{\infty}t^{1-\beta-m}E_{\alpha,2-\beta-m}(-\lambda_n t^\alpha)\Big((a_1, \varphi_n)+(f, \varphi_n)\Big)\varphi_n(x)\bigg\|_{L^2}^2\\
\le&Ct^{2(\alpha-\beta-m)}\sum_{n=1}^{\infty}\lambda_n^2E_{\alpha,\alpha+1-\beta-m}(-\lambda_n t^\alpha)^2|(a_0, \varphi_n)|^2\\
&+Ct^{2(1-\beta-m)}\sum_{n=1}^{\infty}\big(E_{\alpha,2-\beta-m}(-\lambda_n t^\alpha)\big)^2\Big(|(a_1, \varphi_n)|^2+|(f, \varphi_n)|^2\Big)\\
\le&Ct^{2(\gamma\alpha-\beta-m)} \|(-\Delta)^{\gamma}a_0\|_{L^2}^2+Ct^{2(1 -\beta-m)}\Big(\|a_1\|_{L^2}^2+\|f\|_{L^2}^2\Big).
\end{align*}
The proof of the theorem is completed.
\end{proof}

\begin{lemma}\label{thm-reg-delta-v}
If $a_0\in D((-\Delta)^{\gamma})$, $\gamma\in(\frac{1}{\alpha}+\frac12,\frac32]$ and $a_1,f\in H^1(\Omega)$, then
\[
\|\partial_t^m(-\Delta)^{\frac12}v\|_{L^2}\le Ct^{1-\beta-m}\Big(\|(-\Delta)^{\gamma}a_0\|_{L^2}+ \|(-\Delta)^{\frac12}a_1\|_{L^2}+\|(-\Delta)^{\frac12}f\|_{L^2}\Big),~~m=0,1,2,3.
\]
\end{lemma}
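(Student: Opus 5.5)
We follow the proof of Lemma \ref{thm-reg-L2-v}, now applying $(-\Delta)^{\frac12}$ to the eigenfunction expansion of $v=\partial_t^\beta u$. Starting from (\ref{eq-u-wave-01}), we differentiate in $t$ and apply the Riemann--Liouville integral $\mathcal I^{1-\beta}$ termwise, using $\mathcal I^{\mu}[t^{\nu-1}E_{\alpha,\nu}(-\lambda t^\alpha)]=t^{\nu+\mu-1}E_{\alpha,\nu+\mu}(-\lambda t^\alpha)$. This gives
\begin{align*}
\partial_t^m(-\Delta)^{\frac12}v=&-\sum_{n=1}^\infty \lambda_n^{\frac32}t^{\alpha-\beta-m}E_{\alpha,\alpha+1-\beta-m}(-\lambda_nt^\alpha)(a_0,\varphi_n)\varphi_n\\
&+\sum_{n=1}^\infty \lambda_n^{\frac12}t^{1-\beta-m}E_{\alpha,2-\beta-m}(-\lambda_nt^\alpha)\big((a_1,\varphi_n)+\Gamma(2-\alpha)(f,\varphi_n)\big)\varphi_n .
\end{align*}
The $m$-fold differentiation only lowers both the power of $t$ and the second Mittag-Leffler index by $m$, exactly as in Lemma \ref{thm-reg-L2-v}. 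The Mittag-Leffler indices may become nonpositive, e.g. $2-\beta-3<0$. Even so, Lemma \ref{lem-ml-asymp} still gives $|E_{\alpha,\nu}(-x)|\le C/(1+x)$ for $x\ge0$, since $\arg(-x)=\pi$ lies in the admissible sector for every real $\nu$.

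By orthonormality, $\|\partial_t^m(-\Delta)^{\frac12}v\|_{L^2}^2$ is bounded by the two sums of squares. The $a_1,f$ part is the easy one. We bound $\sup_n E_{\alpha,2-\beta-m}(-\lambda_nt^\alpha)^2\le C$ and obtain
\[
Ct^{2(1-\beta-m)}\big(\|(-\Delta)^{\frac12}a_1\|^2_{L^2}+\|(-\Delta)^{\frac12}f\|^2_{L^2}\big),
\]
which already has the claimed rate.

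For the $a_0$ part we use the splitting trick of Lemma \ref{lem-reg-L2-2}. We write
\[
\lambda_n^{3}t^{2(\alpha-\beta-m)}=(\lambda_nt^\alpha)^{3-2\gamma}\,t^{2(\gamma\alpha-\beta-m)}\,\lambda_n^{2\gamma},
\]
and invoke $\sup_{x>0}x^{3-2\gamma}/(1+x)^2<\infty$. This holds because $\gamma>\frac1\alpha+\frac12>1$ gives $0\le 3-2\gamma<2$. The result is the bound $Ct^{\gamma\alpha-\beta-m}\|(-\Delta)^\gamma a_0\|_{L^2}$. Since $\gamma\alpha>1+\frac\alpha2>1$, for $t\le T$ we have $t^{\gamma\alpha-\beta-m}\le T^{\gamma\alpha-1}t^{1-\beta-m}$. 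Hence the $a_0$ contribution is also $\le Ct^{1-\beta-m}\|(-\Delta)^\gamma a_0\|_{L^2}$. Taking square roots and adding the two pieces gives the statement for $m=0,1,2,3$.

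The main obstacle is justifying the termwise formula above, namely interchanging $\mathcal I^{1-\beta}$, $\partial_t^m$ and the infinite sum in the expansion of $v$. The first term of $\partial_tu$ contains $\lambda_n s^{\alpha-1}E_{\alpha,\alpha}$, which is only integrable near $s=0$. We plan to handle this by noting that each series converges in $L^2$ uniformly on compact subsets of $(0,T]$, thanks to the Mittag-Leffler decay combined with the weights $\lambda_n^{\gamma}$. We would also check that the differentiated Mittag-Leffler identity, $\partial_t^m[t^{\nu-1}E_{\alpha,\nu}(-\lambda t^\alpha)]=t^{\nu-1-m}E_{\alpha,\nu-m}(-\lambda t^\alpha)$, remains valid for nonpositive $\nu-m$. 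This follows from the series definition, in the same way as the $E_{\alpha,0}$ computation in Lemma \ref{thm-reg-delta-2}.
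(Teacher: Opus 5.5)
Your route matches the paper's: expand $\partial_t^m(-\Delta)^{\frac12}v$ in eigenfunctions, apply Lemma \ref{lem-ml-asymp}, bound $E_{\alpha,2-\beta-m}$ uniformly for the $a_1,f$ part, and split $\lambda_n^3t^{2(\alpha-\beta-m)}$ for the $a_0$ part. However, the splitting identity you display is false:
\[
(\lambda_nt^\alpha)^{3-2\gamma}\,t^{2(\gamma\alpha-\beta-m)}\,\lambda_n^{2\gamma}=\lambda_n^{3}\,t^{3\alpha-2\beta-2m}\neq\lambda_n^{3}\,t^{2(\alpha-\beta-m)} .
\]
The correct identity is $\lambda_n^{3}t^{2(\alpha-\beta-m)}=(\lambda_nt^\alpha)^{3-2\gamma}\,t^{(2\gamma-1)\alpha-2\beta-2m}\,\lambda_n^{2\gamma}$. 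So the $a_0$ contribution is $Ct^{(\gamma-\frac12)\alpha-\beta-m}\|(-\Delta)^\gamma a_0\|_{L^2}$, not $Ct^{\gamma\alpha-\beta-m}\|(-\Delta)^\gamma a_0\|_{L^2}$.

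The conclusion still holds. The bound $t^{(\gamma-\frac12)\alpha-1}\le T^{(\gamma-\frac12)\alpha-1}$ is valid once $(\gamma-\frac12)\alpha\ge 1$, and that is exactly the hypothesis $\gamma>\frac1\alpha+\frac12$. Your stated reason, $\gamma\alpha>1+\frac\alpha2$, checks the wrong inequality. A warning sign was available: with your exponent the argument would work for every $\gamma\in[\frac1\alpha,\frac32]$, which would make half a power of $-\Delta$ in the hypothesis superfluous.

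Once corrected, your argument is equivalent to the paper's. The paper takes $\xi=\frac1\alpha$, that is,
\[
\lambda_n^3t^{2(\alpha-\beta-m)}=(\lambda_nt^\alpha)^{2-\frac2\alpha}\,t^{2(1-\beta-m)}\,\lambda_n^{\frac2\alpha+1}.
\]
This lands directly on $t^{1-\beta-m}$, and the paper then uses $\|(-\Delta)^{\frac1\alpha+\frac12}a_0\|_{L^2}\le C\|(-\Delta)^\gamma a_0\|_{L^2}$, which follows from $\lambda_n\ge\lambda_1>0$. You spend the slack in $t$ (via $t\le T$); the paper spends it in $\lambda_n$.

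Your concern about integrability at $s=0$ is misplaced. For $\alpha>1$ the factor $s^{\alpha-1}$ is bounded near $0$. The corrected splitting also gives
\[
\|(-\Delta)^{\frac12}\partial_su(s)\|_{L^2}\le C\big(s^{(\gamma-\frac12)\alpha-1}\|(-\Delta)^\gamma a_0\|_{L^2}+\|(-\Delta)^{\frac12}a_1\|_{L^2}+\|(-\Delta)^{\frac12}f\|_{L^2}\big).
\]
This is bounded on $(0,T]$, which is all dominated convergence needs to pass $\mathcal I^{1-\beta}$ through the series.
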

\begin{proof}
From $\xi\in(0,1]$, indicates that
\begin{align*}
&\|(-\Delta)^{\frac12}v\|_{L^2}^2\\
=&\bigg\|-\sum_{n=1}^{\infty}\lambda_n^{\frac32}t^{\alpha-\beta}E_{\alpha,\alpha+1-\beta}(-\lambda_n t^\alpha)(a_0, \varphi_n)\varphi_n(x)
+\sum_{n=1}^{\infty}\lambda_n^{\frac12}t^{1-\beta}E_{\alpha,2-\beta}(-\lambda_n t^\alpha)\Big((a_1, \varphi_n)+(f, \varphi_n)\Big)\varphi_n(x)\bigg\|_{L^2}^2\\
\le&C\sum_{n=1}^{\infty}\lambda_n^3t^{2(\alpha-\beta)}E_{\alpha,\alpha+1-\beta}(-\lambda_n t^\alpha)^2|(a_0, \varphi_n)|^2
+C\sum_{n=1}^{\infty}\lambda_nt^{2-2\beta}E_{\alpha,2-\beta}(-\lambda_n t^\alpha)^2\Big(|(a_1, \varphi_n)|^2+|(f, \varphi_n)|^2\Big)\\
\le&C\sup_n\frac{(\lambda_nt^\alpha)^{2-2\xi}}{(1+\lambda_nt^\alpha)^2}t^{2\xi\alpha-2\beta}\sum_{n=1}^\infty \lambda_n^{2\xi+1}|(a_0, \varphi_n)|^2
+t^{2-2\beta}\sup_nE_{\alpha,2-\beta}(-\lambda_n t^\alpha)^2\sum_{n=1}^{\infty}\lambda_n\Big(|(a_1, \varphi_n)|^2+|(f, \varphi_n)|^2\Big)\\
\le&Ct^{2-2\beta}\Big(\|(-\Delta)^{\gamma}a_0\|_{L^2}^2+\|(-\Delta)^{\frac12}a_1\|_{L^2}^2+\|(-\Delta)^{\frac12}f\|_{L^2}^2\Big),
\end{align*}
 where the last inequality follows from the fact that $\xi=\frac{1}{\alpha}\le\gamma-\frac12$. Then we yield $\|(-\Delta)^{\frac12}v\|_{L^2}\leq Ct^{1-\beta}\Big(\|(-\Delta)^{\gamma}a_0\|_{L^2}+\|(-\Delta)^{\frac12}a_1\|_{L^2}+\|(-\Delta)^{\frac12}f\|_{L^2}\Big)$.

Similarly, we get
\begin{align*}
\|\partial_t^m(-\Delta)^{\frac12}v\|_{L^2}^2
=&\bigg\|-\sum_{n=1}^{\infty}\lambda_n^{\frac32}t^{\alpha-\beta-m}E_{\alpha,\alpha+1-\beta-m}(-\lambda_n t^\alpha)(a_0, \varphi_n)\varphi_n(x)\\
&+\sum_{n=1}^{\infty}\lambda_n^{\frac12}t^{1-\beta-m}E_{\alpha,2-\beta-m}(-\lambda_n t^\alpha)\Big((a_1, \varphi_n)+(f, \varphi_n)\Big)\varphi_n(x)\bigg\|_{L^2}^2\\
\le&Ct^{2(\alpha-\beta-m)}\sum_{n=1}^{\infty}\lambda_n^3E_{\alpha,\alpha+1-\beta-m}(-\lambda_n t^\alpha)^2|(a_0, \varphi_n)|^2\\
&+Ct^{2(1-\beta-m)}\sum_{n=1}^{\infty}\lambda_n\big(E_{\alpha,2-\beta-m}(-\lambda_n t^\alpha)\big)^2\Big(|(a_1, \varphi_n)|^2+|(f, \varphi_n)|^2\Big)\\
\le&Ct^{2(1-\beta-m)}\Big(\|(-\Delta)^{\gamma}a_0\|_{L^2}^2+\|(-\Delta)^{\frac12}a_1\|_{L^2}^2+\|(-\Delta)^{\frac12}f\|_{L^2}^2\Big).
\end{align*}
The proof of the theorem is completed.
\end{proof}

\begin{lemma}\label{thm-reg-L2-z}
If $a_0\in H^2(\Omega)$ and $a_1,f\in H^1(\Omega)$, then
\[ \|\partial_t^mz\|_{L^2}\le Ct^{\beta-m} \|(-\Delta)a_0\|_{L^2}+Ct^{1-m}\Big(\|(-\Delta)^{\frac12}a_1\|_{L^2}+\|(-\Delta)^{\frac12}f\|_{L^2}\Big),~~m=0,1,2,3. \]
\end{lemma}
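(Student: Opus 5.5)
The plan is to work directly from the eigenfunction expansion of $v=\partial_t^\beta u$ used in the proof of Lemma \ref{thm-reg-L2-v}, and to exploit the fact that subtracting $[a_1+\Gamma(2-\alpha)f]\omega_{2-\alpha/2}(t)$ removes exactly the leading term of the Mittag-Leffler series in the $a_1,f$ part. Write $b_n:=(a_1,\varphi_n)+\Gamma(2-\alpha)(f,\varphi_n)$, and note that $\beta=\alpha/2$ gives $\alpha-\beta=\beta$ and $2-\alpha/2=2-\beta$, so $\omega_{2-\alpha/2}(t)=t^{1-\beta}/\Gamma(2-\beta)$. From the computation in Lemma \ref{thm-reg-L2-v}, keeping the factor $\Gamma(2-\alpha)$ in front of $f$,
\begin{align*}
z=&-\sum_{n=1}^\infty\lambda_n t^{\beta}E_{\alpha,\alpha+1-\beta}(-\lambda_nt^\alpha)(a_0,\varphi_n)\varphi_n\\
&+\sum_{n=1}^\infty t^{1-\beta}\Big(E_{\alpha,2-\beta}(-\lambda_nt^\alpha)-\frac{1}{\Gamma(2-\beta)}\Big)b_n\varphi_n .
\end{align*}

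\textbf{Step 1: remove the singular leading term.} Apply the elementary identity $E_{\alpha,\nu}(w)=\frac{1}{\Gamma(\nu)}+wE_{\alpha,\alpha+\nu}(w)$, which follows by shifting the summation index. This turns the second sum into
$$-\sum_{n=1}^\infty\lambda_nt^{\alpha+1-\beta}E_{\alpha,\alpha+2-\beta}(-\lambda_nt^\alpha)b_n\varphi_n .$$
Both pieces of $z$ are now of the form $t^{\mu-1}E_{\alpha,\mu}(-\lambda_nt^\alpha)$ times coefficients, with $\mu=\alpha+1-\beta$ for the $a_0$ part and $\mu=\alpha+2-\beta$ for the $a_1,f$ part.

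\textbf{Step 2: differentiate in time.} Use $\frac{d^m}{dt^m}\big[t^{\mu-1}E_{\alpha,\mu}(-\lambda t^\alpha)\big]=t^{\mu-1-m}E_{\alpha,\mu-m}(-\lambda t^\alpha)$, verified term by term on the power series. This gives $\partial_t^m z$ with the factor $t^{\beta-m}$ on the $a_0$ part and $t^{\alpha+1-\beta-m}$ on the $a_1,f$ part. Differentiating under the sum is justified for $t>0$ by the uniform bounds obtained in Step 3.

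\textbf{Step 3: estimate each part.}
For the $a_0$ part, Lemma \ref{lem-ml-asymp} with real second index $\nu=\alpha+1-\beta-m$ gives $|E_{\alpha,\nu}(-\lambda_nt^\alpha)|\le C$. Squaring and summing then yields the bound $Ct^{\beta-m}\|(-\Delta)a_0\|_{L^2}$.
For the $a_1,f$ part, split
$$\lambda_nt^{\alpha+1-\beta-m}=t^{1-m}\,(\lambda_nt^\alpha)^{1/2}\,\lambda_n^{1/2},$$
using $\alpha-\beta-\alpha/2=0$. Then $\sup_n (\lambda_nt^\alpha)^{1/2}|E_{\alpha,\alpha+2-\beta-m}(-\lambda_nt^\alpha)|\le \sup_{x>0} Cx^{1/2}/(1+x)\le C$. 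Summing gives $Ct^{1-m}\big(\|(-\Delta)^{\frac12}a_1\|_{L^2}+\|(-\Delta)^{\frac12}f\|_{L^2}\big)$, with $\Gamma(2-\alpha)$ absorbed into $C$. The triangle inequality then yields the claim for $m=0,1,2,3$.

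The main obstacle I expect is the Mittag-Leffler bound when the second index $\mu-m$ becomes zero or negative for $m=2,3$. Lemma \ref{lem-ml-asymp} allows an arbitrary complex $\nu$, and the argument $-\lambda_nt^\alpha$ lies on the negative real axis, so $|\arg|=\pi$ is admissible. The bound $C/(1+|w|)$ therefore applies uniformly with $C$ depending only on $\alpha,\beta,m$, and this uniformity also justifies the termwise differentiation. A secondary check is that the identity in Step 1 uses exactly the constant $1/\Gamma(2-\beta)$ produced by $\omega_{2-\alpha/2}$, so that the $t^{1-\beta}$ singularity cancels exactly.
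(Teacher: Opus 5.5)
Your proposal is correct and follows essentially the same route as the paper. Your Step 1, using $E_{\alpha,\nu}(w)=\frac{1}{\Gamma(\nu)}+wE_{\alpha,\alpha+\nu}(w)$ and $\alpha-\beta=\beta$, derives exactly the expansion of $z$ in terms of $t^{\beta}E_{\alpha,1+\beta}$ and $t^{1+\beta}E_{\alpha,2+\beta}$ that the paper simply states; both arguments then differentiate termwise, bound the $a_0$ part by the uniform Mittag-Leffler bound of Lemma \ref{lem-ml-asymp} (valid for arbitrary second index, which covers $m=2,3$), and bound the $a_1,f$ part via $\lambda_n^2t^{2(1+\beta-m)}=t^{2(1-m)}(\lambda_nt^\alpha)\lambda_n$ together with $\sup_{x>0}x/(1+x)^2<\infty$.
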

\begin{proof}
The solution $z$ of system (\ref{eq-gov-trans-novel}) can be expressed as:
\begin{align}\nonumber
z(x,t) =& -\sum_{n=1}^{\infty} \lambda_n t^{\beta}  E_{\alpha, 1+\beta}(-\lambda_n t^\alpha)(a_0, \varphi_n)\varphi_n(x) \\\label{solution-z}
&-\sum_{n=1}^{\infty}\lambda_nt^{1+\beta} E_{\alpha, 2+\beta}(-\lambda_n t^\alpha)\big( (a_1, \varphi_n) + \Gamma(2-\alpha) (f, \varphi_n) \big)\varphi_n(x).    
\end{align}
From (\ref{eq-u-wave-01}), indicates that 
\begin{align*}
\|z\|_{L^2}^2
=&\bigg\|-\sum_{n=1}^{\infty}\lambda_n t^{\beta}E_{\alpha,1+\beta}(-\lambda_n t^\alpha)(a_0, \varphi_n)\varphi_n(x)\\ 
&-\sum_{n=1}^{\infty}\lambda_nt^{1+\beta}E_{\alpha,2+\beta}(-\lambda_n t^\alpha)\Big((a_1, \varphi_n)+(f, \varphi_n)\Big)\varphi_n(x)\bigg\|_{L^2}^2\\ 
\le&C\sum_{n=1}^{\infty}\lambda_n^2t^{2\beta}E_{\alpha,1+\beta}(-\lambda_n t^\alpha)^2|(a_0, \varphi_n)|^2\\ &+C\sum_{n=1}^{\infty}\lambda_n^2t^{2+2\beta}E_{\alpha,2+\beta}(-\lambda_n t^\alpha)^2\Big(|(a_1, \varphi_n)|^2+|(f,\varphi_n)|^2\Big)\\
\le&Ct^{2\beta}\sup_nE_{\alpha,1+\beta}(-\lambda_n t^\alpha)^2{(1+\lambda_nt^\alpha)^2}\sum_{n=1}^\infty \lambda_n^{2}|(a_0, \varphi_n)|^2\\
&+t^{2}\sup_n\frac{\lambda_nt^{\alpha}}{(1+\lambda_nt^\alpha)^2}\sum_{n=1}^{\infty}\Big(\lambda_n|(a_1, \varphi_n)|^2+\lambda_n|(f, \varphi_n)|^2\Big)\\
\le&Ct^{2\beta} \|(-\Delta)a_0\|_{L^2}^2+Ct^{2}\Big(\|(-\Delta)^{\frac12}a_1\|_{L^2}^2+\|(-\Delta)^{\frac12}f\|_{L^2}^2\Big). 
\end{align*} 

Similarly, we get \begin{align*} \|\partial_t^mz\|_{L^2}^2 
=&\bigg\|-\sum_{n=1}^{\infty}\lambda_nt^{\beta-m}E_{\alpha,1+\beta-m}(-\lambda_n t^\alpha)(a_0, \varphi_n)\varphi_n(x)\\
&-\sum_{n=1}^{\infty}\lambda_nt^{1+\beta-m}E_{\alpha,2+\beta-m}(-\lambda_n t^\alpha)\Big((a_1, \varphi_n)+(f, \varphi_n)\Big)\varphi_n(x)\bigg\|_{L^2}^2\\ 
\le&Ct^{2(\beta-m)}\sum_{n=1}^{\infty}\lambda_n^2E_{\alpha,1+\beta-m}(-\lambda_n t^\alpha)^2|(a_0, \varphi_n)|^2\\ 
&+Ct^{2(1+\beta-m)}\sum_{n=1}^{\infty}\lambda_n^2\big(E_{\alpha,2+\beta-m}(-\lambda_n t^\alpha)\big)^2\Big(|(a_1, \varphi_n)|^2+|(f, \varphi_n)|^2\Big)\\ \le&Ct^{2(\beta-m)} \|(-\Delta)a_0\|_{L^2}^2+Ct^{2(1 -m)}\Big(\|(-\Delta)^{\frac12}a_1\|_{L^2}^2+\|(-\Delta)^{\frac12}f\|_{L^2}^2\Big). 
\end{align*}
The proof of the theorem is completed.
\end{proof}

\begin{lemma}\label{thm-reg-delta-z} If $a_0\in H^3(\Omega)$ and $a_1,f\in H^2(\Omega)$, then 
\[ \|\partial_t^m(-\Delta)^{\frac12}z\|_{L^2}\le Ct^{\beta-m}\Big(\|(-\Delta)^{\frac32}a_0\|_{L^2}+ \|(-\Delta)a_1\|_{L^2}+\|(-\Delta)f\|_{L^2}\Big),~~m=0,1,2,3. \] 
\end{lemma}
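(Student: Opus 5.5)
The plan is to mirror the proof of Lemma \ref{thm-reg-L2-z}, carrying one extra factor $\lambda_n^{1/2}$ throughout. I start from the series representation (\ref{solution-z}) and apply $(-\Delta)^{\frac12}$ termwise, which multiplies the $n$-th coefficient by $\lambda_n^{\frac12}$. For the time derivatives I use the identity
\[
\partial_t^m\big(t^{\mu}E_{\alpha,\mu+1}(-\lambda_n t^\alpha)\big)=t^{\mu-m}E_{\alpha,\mu+1-m}(-\lambda_n t^\alpha),
\]
with $\mu=\beta$ and $\mu=1+\beta$. It follows directly from differentiating the power series term by term. Then, for $m=0,1,2,3$,
\[
\partial_t^m(-\Delta)^{\frac12}z=-\sum_{n=1}^\infty \lambda_n^{\frac32}t^{\beta-m}E_{\alpha,1+\beta-m}(-\lambda_nt^\alpha)(a_0,\varphi_n)\varphi_n-\sum_{n=1}^\infty \lambda_n^{\frac32}t^{1+\beta-m}E_{\alpha,2+\beta-m}(-\lambda_nt^\alpha)\big((a_1,\varphi_n)+\Gamma(2-\alpha)(f,\varphi_n)\big)\varphi_n .
\]
By orthonormality of $\{\varphi_n\}$ and $(a+b)^2\le 2a^2+2b^2$, the squared $L^2$ norm splits into two sums that I estimate separately.

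For the $a_0$ part I write $\lambda_n^{3}=\lambda_n^{3}$ directly. I then bound $\sup_n |E_{\alpha,1+\beta-m}(-\lambda_n t^\alpha)|^2\le C$ using Lemma \ref{lem-ml-asymp}, since $-\lambda_nt^\alpha$ lies on the negative real axis, where $|\arg z|=\pi$. This gives $Ct^{2(\beta-m)}\|(-\Delta)^{\frac32}a_0\|_{L^2}^2$.

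For the $a_1,f$ part I split $\lambda_n^3=\lambda_n\cdot\lambda_n^2$ and write
\[
\lambda_n t^{2(1+\beta-m)}E^2=t^{2(\beta-m)}\,t^{2-\alpha}\,(\lambda_nt^\alpha)E_{\alpha,2+\beta-m}(-\lambda_nt^\alpha)^2 .
\]
Lemma \ref{lem-ml-asymp} gives $\sup_n\frac{\lambda_nt^\alpha}{(1+\lambda_nt^\alpha)^2}\le C$, and $t^{2-\alpha}\le T^{2-\alpha}$ because $\alpha<2$. This yields $Ct^{2(\beta-m)}\big(\|(-\Delta)a_1\|^2+\|(-\Delta)f\|^2\big)$. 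Alternatively, I could simply use $t^{2}\le Ct^{0}$, which gives the bound $t^{2(1+\beta-m)}\le Ct^{2(\beta-m)}$ on $(0,T]$ and avoids the split entirely. Taking square roots then gives the claim, with $C$ depending on $T$ and $\alpha$. Finally, I need the identification $\|(-\Delta)^{\frac32}a_0\|$, $\|(-\Delta)a_1\|$ finite for $a_0\in H^3$ and $a_1\in H^2$, interpreted in the $\mathcal D$-spaces as in the propositions.

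The main obstacle is justifying the uniform Mittag-Leffler bound when the second index $1+\beta-m$ or $2+\beta-m$ is zero or negative, which happens for $m=2,3$. Lemma \ref{lem-ml-asymp} allows arbitrary complex $\nu$, so the bound $|E_{\alpha,\nu}(-x)|\le C/(1+x)$ still applies. I only need to confirm that $\alpha\in(1,2)$ admits an admissible $\mu$ in $(\pi\alpha/2,\pi)$, which it does. Termwise differentiation is justified by the uniform convergence the same bounds provide on compact subsets of $(0,T]$.
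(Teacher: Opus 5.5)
Your proposal is correct and follows the paper's route. The paper's proof just says to repeat Lemma \ref{thm-reg-L2-z} with the extra factor $\lambda_n^{\frac12}$. Your splitting $\lambda_n^3t^{2(1+\beta-m)}E^2=t^{2(\beta-m)}\,t^{2-\alpha}(\lambda_nt^\alpha)E^2\cdot\lambda_n^2$ is the same device used there.

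The ``alternative'' of using only $t^2\le C$ and skipping the split does not work. Without the split you are left with $\lambda_n^3|(a_1,\varphi_n)|^2$, which requires $\|(-\Delta)^{\frac32}a_1\|$. Otherwise you would need $\sup_n\lambda_nE_{\alpha,\nu}(-\lambda_nt^\alpha)^2$ to be bounded uniformly in $t$, but it is only bounded by $Ct^{-\alpha}$. Absorbing one $\lambda_n$ into $\lambda_nt^\alpha/(1+\lambda_nt^\alpha)^2$ is therefore essential.
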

\begin{proof}
The desired results are directly obtained following the idea in Lemma \ref{thm-reg-L2-z}.

\end{proof}

\section{Numerical Algorithms}\label{sec-na}
\subsection{Preliminary}
Our main concern is the time approximation of (\ref{eq-gov}). For a positive integer $N$, the interval $[0,T]$ is divided into $0=t_0<t_1<\cdots< t_{k-1}<t_k<\cdots<t_N=T$ with $$t_k=T(k/N)^r,~r\geq1,~0\leq k\leq N,$$ with $\tau_k=t_k-t_{k-1}$ being the time step size. We use $\rho_k:=\tau_k/\tau_{k+1} ~\mbox{for}~1\leq k\leq N-1$ and $\rho=\max_k\{\rho_k\}$ to denote the local time step-size ratios and the maximum ratio, respectively. Here and hereafter, $g^k$ denotes $g(t_k)$. Define the off-set time
points and grid functions
$t_{n-\theta}:= \theta t_{n-1}+(1-\theta)t_n$ and $g^{n-\theta}:= \theta g^{n-1}+(1-\theta)g^n,~1 \le n \le N$.
The Caputo derivative $\partial_t^\beta v(t_{n-\theta})$ can be formally approximated by the following discrete form with convolution structure:  
\begin{align}\label{b-1}
(\bar\partial_t^\beta v)^{n-\theta}:=\sum\limits_{k=1}^{n}A_{n-k}^{(n)}\nabla_{\tau}v^k,~\mbox{where }~\nabla_{\tau}v^k=v^k-v^{k-1}.    
\end{align}
The general discretization (\ref{b-1}) includes two practical ones. It leads to the $L1$ formula while
$\theta= 0$ (see also (\ref{L1})) and the Alikhanov formula while $\theta=\beta/2$ (see also (\ref{L2-1})). To efficiently solve the fractional diffusion-wave equation with lower weak singular or more complicated solutions, we next give more explicit formulations of these two classical approximations on nonuniform meshes, which have also been rigorously studied in \cite{LiaoH2018L1,LiaoH2021L2_1}.

\textbf{Nonuniform $L1$ formula} The $L1$ formula (\ref{L1}), one of the most classical discrete methods, is used to approximate the Caputo derivative on nonuniform meshes $\{t_n|0=t_0<t_1<\dots <t_N=T\}$. Denote $\tau_n=t_n-t_{n-1}$, $n\geq1$.
\begin{align}\nonumber
\bar\partial_t^\beta v(t_n)
:&= \sum_{k=1}^{n}\int_{t_{k-1}}^{t_k}\omega_{1-\beta}(t_n-s)\frac{v(t_k)-v(t_{k-1})}{\tau_k}ds\\ \label{L1}
&=\sum_{k=1}^nA_{n-k}^{(n)}\nabla_\tau v(t_k),
\end{align}
where $\nabla_\tau v(t_k)=v(t_k)-v(t_{k-1})$, $A_{n-k}^{(n)}:=\int_{t_{k-1}}^{t_k}\frac{\omega_{1-\beta}(t_n-s)}{\tau_k}ds$. 

\textbf{Nonuniform Alikhanov formula} 
Denote $\theta=\beta/2=\alpha/4$, and the discrete coefficients $a_{n-k}^{(n)}$ and $b_{n-k}^{(n)}$ are defined by
\begin{align*}
&a_{0}^{(n)}=\frac{1}{\tau_n}\int_{t_{n-1}}^{t_{n-\theta}}
\omega_{1-\beta}(t_{n-\theta}-s)ds;\\
&a_{n-k}^{(n)}=\frac{1}{\tau_k}\int_{t_{k-1}}^{t_k}
\omega_{1-\beta}(t_{n-\theta}-s)ds,\quad 1\leq k\leq n-1;\\
&b_{n-k}^{(n)}=\frac{2}{\tau_k(\tau_k+\tau_{k+1})}
\int_{t_{k-1}}^{t_k}(s-t_{k-1/2})
\omega_{1-\beta}(t_{n-\theta}-s)ds,\quad 1\leq k\leq n-1.
\end{align*}
The approximation to
the Caputo derivative $\partial_t^\beta v(t_{n-\theta})$ is defined by
\begin{align}\nonumber
(\bar\partial_t^\beta v)^{n-\theta}
&:=\int_{t_{n-1}}^{t_{n-\theta}}\omega_{1-\beta}(t_{n-\theta}-s)(\Pi_{1}v)'(s)ds
+\sum\limits_{k=1}^{n-1}\int_{t_{k-1}}^{t_{k}}\omega_{1-\beta}(t_{n-\theta}-s)
(\Pi_{2}v)'(s)ds\\\nonumber
&=a_0^{(n)}\nabla_\tau v^n+\sum\limits_{k=1}^{n-1}(a_{n-k}^{(n)}\nabla_\tau v^k+\rho_kb_{n-k}^{(n)}\nabla_\tau v^{k+1}-b_{n-k}^{(n)}\nabla_\tau v^k),\\\label{L2-1}
&=\sum\limits_{k=1}^{n}A_{n-k}^{(n)}\nabla_{\tau}v^k,
\end{align}
where $\Pi_{1}$ and $\Pi_{2}$ denote the linear interpolation operator and the quadratic interpolation operator, respectively, and the discrete convolution kernels $A_{n-k}^{(n)}$ are defined as follows: $A_0^{(1)}=a_0^{(1)}$ if $n=1$ and, for $n\geq2$,
\begin{align}\label{b-4}
\begin{array}{c}
A_{n-k}^{(n)}=\left\{
\begin{array}{ll}
a_0^{(n)}+\rho_{n-1}b_1^{(n)},&\mbox{for}~k=n,\\
a_{n-k}^{(n)}+\rho_{k-1}b_{n-k+1}^{(n)}-b_{n-k}^{(n)},
&\mbox{for}~ 2\leq k\leq n-1,\\
a_{n-1}^{(n)}-b_{n-1}^{(n)},&\mbox{for}~ k=1.
\end{array}\right.
\end{array}
\end{align} 

  In the rest of this paper, we will use the general form (\ref{b-1}) to represent the nonuniform $L1$ formula and $L2$-$1_\sigma$ formula while \( \theta = 0 \) and \( \theta = \beta/2 \), respectively. The following two related properties have been verified in \cite{LiaoH2018L1,LiaoH2021L2_1} for the discrete coefficients  \( A^{(n)}_{n-k} \)  of the nonuniform $L1$ formula (with $\pi_A = 1$) and the nonuniform Alikhanov formula
(with $\pi_A=11/4$ and $\rho=7/4$), which are required in the numerical analysis of corresponding algorithms:
\begin{itemize}
    \item[\textbf{A1.}] The discrete kernels are positive and monotone: $A_0^{(n)}\geq A_1^{(n)}\geq\cdots\geq A_{n-1}^{(n)}> 0;$
    \item[\textbf{A2.}]There is a constant $\pi_A$ such that $A_{n-k}^{(n)}
\geq\frac{1}{\pi_A\tau_k}\int_{t_{k-1}}^{t_k}\omega_{1-\beta}(t_n-s)\frac{{\rm d}s}{s},~1\leq k\leq n.$
\end{itemize}

Next, the time semi-discrete scheme will be given.

\subsection{Time semi-discrete scheme}
The corresponding numerical schemes of problems (\ref{eq-gov-trans}) and (\ref{eq-gov-trans-novel}) are as follows:
\begin{equation}\label{num-scheme-1}
\begin{cases}
  (\bar\partial_t^\beta V)^{n-\theta} - \Delta U^{n-\theta} = a_1(x)\omega_{2-\alpha}(t_{n-\theta})+t_{n-\theta}^{1-\alpha}f(x), & 1\le n \le N, \\
  V^{n-\theta}  = (\bar\partial_t^\beta U)^{n-\theta}, & 1\le n \le N, \\
  U(x,0) = V(x,0)=0, & x \in \Omega, \\
  U(x,t) = V(x,t)=0, & (x,t) \in \partial \Omega \times (0,T),
\end{cases}
\end{equation}
and
\begin{equation}\label{num-scheme-2} \begin{cases}   (\bar\partial_t^\beta Z)^{n-\theta} - \Delta U^{n-\theta} = 0, & 1\le n \le N, \\  
Z^{n-\theta}  = (\bar\partial_t^\beta U)^{n-\theta}-[a_1(x)+\Gamma{(2-\alpha)}f(x)]\omega_{2-\beta}(t_{n-\theta}), & 1\le n \le N, \\   U(x,0) = V(x,0)=0, & x \in \Omega,\\ U(x,t) = V(x,t)=0, & (x,t) \in \partial \Omega \times (0,T), \end{cases} \end{equation}
where $U^n$, $V^n$  and $Z^n$ are numerical solutions corresponding to $u(t_n)$, $v(t_n)$ and $z(t_n)$  in (\ref{eq-gov-trans}) and (\ref{eq-gov-trans-novel}).
Some important lemmas are introduced as following.
\begin{lemma}\label{FD-ineq}
(\cite{LiaoH2018L1})For $V^n$, $1\leq n\leq N$, one has
\begin{align*}
((\bar\partial_t^\beta V)^{n-\theta},V^{n-\theta})\geq\frac{1}{2}\sum\limits_{k=1}^{n}A_{n-k}^{(n)}\nabla_{\tau}
(\|V^k\|^2).
\end{align*}
\end{lemma}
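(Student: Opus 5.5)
The plan is to expand the discrete inner product directly in terms of the increments $\nabla_\tau V^k$ and reduce the claim to a pointwise inequality of the form $2ab\ge a^2-b^2$ ($|ab|\le\frac12(a^2+b^2)$ suffices), using only the structural properties \textbf{A1} (positivity and monotonicity of the kernels $A_{n-k}^{(n)}$). By linearity, $(\bar\partial_t^\beta V)^{n-\theta}=\sum_{k=1}^nA_{n-k}^{(n)}\nabla_\tau V^k$. Write $V^{n-\theta}=V^n-\theta\nabla_\tau V^n$. The target inequality is then equivalent to
\[
2\sum_{k=1}^nA_{n-k}^{(n)}(\nabla_\tau V^k,V^n)-2\theta\sum_{k=1}^nA_{n-k}^{(n)}(\nabla_\tau V^k,\nabla_\tau V^n)\ \ge\ \sum_{k=1}^nA_{n-k}^{(n)}\big(\|V^k\|^2-\|V^{k-1}\|^2\big).
\]

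For the L1 case ($\theta=0$) I would follow the standard Liao--Li--Zhang argument. Apply summation by parts to the left-hand side: $\sum_k A_{n-k}^{(n)}\nabla_\tau V^k=A_0^{(n)}V^n-\sum_{k=1}^{n-1}(A_{n-k-1}^{(n)}-A_{n-k}^{(n)})V^k-A_{n-1}^{(n)}V^0$. Pair this with $V^n$ and bound each cross term with $2(V^k,V^n)\le\|V^k\|^2+\|V^n\|^2$. The coefficients $A_{n-k-1}^{(n)}-A_{n-k}^{(n)}$ and $A_{n-1}^{(n)}$ are nonnegative by \textbf{A1}, so these inequalities may be multiplied through. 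Collecting terms yields exactly $\sum_k A_{n-k}^{(n)}\nabla_\tau\|V^k\|^2$, which is the same summation by parts applied to the scalars $\|V^k\|^2$.

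For the Alikhanov case ($\theta=\beta/2$) the extra term $-2\theta\sum_kA_{n-k}^{(n)}(\nabla_\tau V^k,\nabla_\tau V^n)$ must be absorbed. I would first rewrite the left-hand side as
\[
2\sum_kA_{n-k}^{(n)}(\nabla_\tau V^k,V^n)=\sum_kA_{n-k}^{(n)}\nabla_\tau\|V^k\|^2+\text{(nonnegative quadratic form in the increments)},
\]
and then show that this quadratic form dominates the $\theta$-term. This is the main obstacle. It needs more than \textbf{A1}: one requires a bound of the type $(1-2\theta)A_0^{(n)}-A_1^{(n)}\ge0$, or in general that the kernel sequence modified by the $\theta$-shift is still positive semidefinite. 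I would try to prove it using the explicit formulas for $a_0^{(n)}$ and $b_1^{(n)}$ together with the step-ratio bound $\rho\le7/4$. Failing that, I would invoke the corresponding result of Liao et al.\ \cite{LiaoH2021L2_1}, where exactly this positive-definiteness is verified for the nonuniform Alikhanov kernels.
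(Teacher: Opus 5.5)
Your $L1$ argument ($\theta=0$) is correct. It is the standard proof behind the cited result; the paper itself gives no proof and only quotes \cite{LiaoH2018L1}. Summation by parts gives
\[
2\Big(\sum_{k=1}^nA_{n-k}^{(n)}\nabla_\tau V^k,V^n\Big)-\sum_{k=1}^nA_{n-k}^{(n)}\nabla_\tau\|V^k\|^2=\sum_{k=0}^{n-1}c_k\|W^k\|^2,\qquad W^k:=V^n-V^k,
\]
with $c_0=A_{n-1}^{(n)}$ and $c_k=A_{n-k-1}^{(n)}-A_{n-k}^{(n)}$, all nonnegative by \textbf{A1}. You are also right that \textbf{A1} alone cannot cover $\theta=\beta/2$.

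The gap is in the Alikhanov case: the sufficient condition you plan to prove, $(1-2\theta)A_0^{(n)}\ge A_1^{(n)}$, is false, so that route cannot be completed. Already on a uniform mesh, the explicit formulas give $(1-2\theta)(a_0^{(n)}+b_1^{(n)})=a_1^{(n)}-b_1^{(n)}$ exactly; this identity is precisely where $\theta=\beta/2$ enters. Hence $(1-2\theta)A_0^{(n)}-A_1^{(n)}=-b_2^{(n)}<0$ for every $n\ge3$.

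The correct requirement comes out of your own decomposition. Since $\sum_kA_{n-k}^{(n)}\nabla_\tau V^k=\sum_{k=0}^{n-1}c_kW^k$ and $V^{n-\theta}=V^n-\theta W^{n-1}$, twice the difference of the two sides of the lemma satisfies
\[
\sum_{k=0}^{n-1}c_k\|W^k\|^2-2\theta\Big(W^{n-1},\sum_{k=0}^{n-1}c_kW^k\Big)\ \ge\ \Big[(1-2\theta)A_0^{(n)}-(1-\theta)^2A_1^{(n)}\Big]\|W^{n-1}\|^2 .
\]
This follows by completing the square in each $W^k$, $k\le n-2$, and using $\sum_{k=0}^{n-2}c_k=A_1^{(n)}$. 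The bound is attained at $W^k=\theta W^{n-1}$, so the lemma holds for all sequences exactly when $(1-2\theta)A_0^{(n)}\ge(1-\theta)^2A_1^{(n)}$. That condition is implied by the Alikhanov-type condition $(1-2\theta)A_0^{(n)}\ge(1-\theta)A_1^{(n)}$, the nonuniform analogue of Alikhanov's $(2\sigma-1)c_0-\sigma c_1>0$ with $\sigma=1-\theta$. As far as I recall, this is the extra kernel property verified in \cite{LiaoH2021L2_1} under the step-ratio restriction, and it is not among \textbf{A1}--\textbf{A2}. You need to replace your target condition by this one and either prove it or cite it. As written, the $\theta=\beta/2$ half of your proof is not established.
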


\begin{lemma}\label{grownwall}
(\cite{LyuP2022SFOR})Let $(g^n)_{n=1}^N$ and $(\lambda_l)_{l=0}^{N-1}$ be given nonnegative sequences. Assume that there exists a constant $\Lambda$ such that $\Lambda\geq\sum_{l=0}^{N-1}\lambda_l$, and that the maximum step satisfies
$$\max_{1\leq n \leq N}\tau_n\leq \frac{1}{\sqrt[\beta]{4\pi_A\Gamma{(2-\beta)}\Lambda}}.$$
Then, for any nonnegative sequence $(v^k)_{k=0}^N$ and $(w^k)_{k=0}^N$ satisfying
$$\sum_{k=1}^nA_{n-k}^{(n)}\nabla_\tau\big[(v^k)^2+(w^k)^2\big]\leq\sum_{k=1}^n\lambda_{n-k}\big(v^{k-\theta}+w^{k-\theta}\big)^2+(v^{n-\theta}+w^{n-\theta})g^n,~~1\leq n\leq N,$$
it holds that
$$v^n+w^n\leq 4E_\beta(4\pi_A\Lambda t_n^\beta)\bigg(v^0+w^0+\max_{1\leq k\leq n}\sum_{j=1}^kP_{k-j}^{(k)}g^j\bigg),~~1\leq n\leq N,$$
where $P_0^{(n)}:=\frac{1}{A_0^{(n)}}$, $P_{n-j}^{(n)}:=\frac{1}{A_0^{(j)}}\sum_{k=j+1}^n(A_{k-j-1}^{(k)}-A_{k-j}^{(k)})P_{n-k}^{(n)}$, $1\leq j\leq n-1$, $E_\beta(z)=\sum_{k=0}^\infty\frac{z^k}{\Gamma{(1+k\beta)}}$ is the Mittag-Leffler function.
\end{lemma}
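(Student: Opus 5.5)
The plan is to adapt the discrete fractional Gr\"onwall argument of \cite{LiaoH2018L1} to the pair $(v^k,w^k)$. The main tool is the complementary discrete kernels $P^{(n)}_{n-j}$ defined in the statement. First I will record three properties that follow from \textbf{A1}--\textbf{A2}:
\begin{itemize}
\item[(i)] $P^{(n)}_{n-j}\ge 0$ and the discrete convolution identity $\sum_{j=k}^n P^{(n)}_{n-j}A^{(j)}_{j-k}=1$ for $1\le k\le n$; this follows from the recursive definition and the monotonicity in \textbf{A1}.
\item[(ii)] $P^{(n)}_0=1/A^{(n)}_0\le \pi_A\Gamma(2-\beta)\tau_n^\beta$, obtained from \textbf{A2} with $k=n$.
\item[(iii)] For $m\ge0$, $\sum_{j=1}^n P^{(n)}_{n-j}\,\omega_{1+m\beta}(t_j)\le \pi_A\,\omega_{1+(m+1)\beta}(t_n)$; this follows from \textbf{A2} by comparing $\sum_k A^{(n)}_{n-k}\nabla_\tau \omega_{1+(m+1)\beta}(t_k)$ with $\omega_{1+m\beta}(t_n)$.
\end{itemize}

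Next I set $E^k:=v^k+w^k$ and $Q^k:=(v^k)^2+(w^k)^2$, so that $\tfrac12(E^k)^2\le Q^k\le (E^k)^2$. I multiply the hypothesis at level $j$ by $P^{(n)}_{n-j}$, sum over $j=1,\dots,n$, and exchange the order of summation using (i). This telescopes the left side and gives
\[
Q^n\le Q^0+\sum_{j=1}^n P^{(n)}_{n-j}\sum_{k=1}^j\lambda_{j-k}(E^{k-\theta})^2+\sum_{j=1}^n P^{(n)}_{n-j}E^{j-\theta}g^j .
\]
I then choose $n_0\le n$ with $E^{n_0}=\max_{0\le k\le n}E^k$. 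Since $E^{k-\theta}\le \max(E^k,E^{k-1})$, the last term is bounded by $E^{n_0}\max_k\sum_{j}P^{(k)}_{k-j}g^j$. Writing the inequality at $n_0$ and using $\tfrac12 (E^{n_0})^2\le Q^{n_0}$ and $Q^0\le (E^0)^2$, then dividing by $E^{n_0}$, gives a linear bound of the form
\[
E^n\le 2E^0+2\max_{k}\sum_{j=1}^kP^{(k)}_{k-j}g^j+2\sum_{j=1}^{n}P^{(n)}_{n-j}\sum_{k=1}^j\lambda_{j-k}\,E^{k-\theta}.
\]
In the last sum I bound one factor $E^{k-\theta}$ by $E^{n_0}$ and keep the other one.

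The remaining $\lambda$-term is handled by the standard linear discrete Gr\"onwall step. The diagonal contribution $k=j=n$ contains $E^n$ itself. Using (ii) and $\lambda_0\le\Lambda$, its coefficient is at most $\pi_A\Gamma(2-\beta)\Lambda\tau_n^\beta\le\tfrac14$ under the stated step-size restriction. Hence this term can be absorbed into the left side, at the cost of a factor $2$; this explains the overall constant $4$.

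For the off-diagonal terms I argue by induction. The hypothesis is $E^k\le 4\big(E^0+\max\sum P g\big)\sum_{m\ge0}(4\pi_A\Lambda)^m\omega_{1+m\beta}(t_k)$, the right side being $4E_\beta(4\pi_A\Lambda t_k^\beta)\big(E^0+\max\sum Pg\big)$ expanded as a series. Inserting it and using $\sum_l\lambda_l\le\Lambda$ together with (iii) raises each power of $t$ by $\beta$, so the series reproduces itself and closes the induction.

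I expect the main obstacle to be this last step. The kernel $\lambda_{j-k}$ is an arbitrary nonnegative sequence rather than a fractional kernel, so I must check that after the double sum $\sum_j P^{(n)}_{n-j}\sum_k\lambda_{j-k}\omega_{1+m\beta}(t_k)$ the bound $\Lambda\,\pi_A\,\omega_{1+(m+1)\beta}(t_n)$ still holds. I would try first to use the monotonicity of $\omega_{1+m\beta}(t_k)$ in $k$ to replace $\omega_{1+m\beta}(t_k)$ by $\omega_{1+m\beta}(t_j)$, and then apply (iii). I also need to track the off-set index $k-\theta$ carefully, via $E^{k-\theta}\le\max(E^{k-1},E^k)$, so that the diagonal absorption uses only $E^n$.
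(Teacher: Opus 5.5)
The paper does not prove this lemma; it quotes it from the SFOR reference, which adapts the discrete Gr\"onwall argument of Liao et al. Your skeleton follows that route: sum against $P^{(n)}_{n-j}$ so the left side telescopes, pass to a maximal index, and absorb the $j=k=n$ term with the step restriction. Those steps are fine, up to the indexing point at the end.

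The gap is in property (iii), on which your closing step rests, and it is not where you expected it. The arbitrary kernel $\lambda_l$ is harmless, since monotonicity in $k$ and $\sum_l\lambda_l\le\Lambda$ dispose of it. What fails is (iii) itself for $m\ge1$:
\begin{itemize}
\item Take $n=1$ and the $L1$ kernel ($\pi_A=1$). Then $P^{(1)}_0=1/A^{(1)}_0=\Gamma(2-\beta)\tau_1^\beta$, so (iii) with $m=1$ is equivalent to $\Gamma(2-\beta)\Gamma(1+2\beta)\le\Gamma(1+\beta)$.
\item This is an equality at $\beta=1/2$ and false for every $\beta\in(1/2,1)$, i.e.\ for every $\alpha\in(1,2)$ in this paper; at $\beta=3/4$ it reads $1.20>0.92$.
\item For any fixed $\beta$ and $\pi_A$ it also fails for large $m$, since the ratio grows like $(m\beta)^\beta$.
\end{itemize}
Your justification ``from \textbf{A2}'' cannot work. \textbf{A2} only lets you pass to the $L1$ weights, and the $L1$ formula underestimates the Caputo derivative of a convex function. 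The profile $\omega_{1+(m+1)\beta}$ is convex as soon as $(m+1)\beta>1$. So (iii) is guaranteed only in the concave range, which here means only $m=0$.

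This is not merely a first-step artefact; at $n=1$ there are no off-diagonal terms anyway. After you replace $t_k$ by $t_j$, the inequality your induction needs is
$\sum_{j=1}^nP^{(n)}_{n-j}E_\beta(ct_j^\beta)\le\frac{\pi_A}{c}\big(E_\beta(ct_n^\beta)-1\big)$ with $c=4\pi_A\Lambda$. It already fails at $n=2$ on a uniform $L1$ mesh with $\tau=1$, $\beta=3/4$, $c=1/2$, which satisfies the step restriction. There the left side is about $3.85$ and the right side about $3.60$.

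So the series-by-series argument does not close the induction. A proof needs a comparison step that is genuinely valid for convex powers, rather than evaluating the majorant at right endpoints term by term. For example, it could exploit the strictly lower-triangular structure that remains once the diagonal is absorbed.

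A smaller point: dividing by $E^{n_0}$ gives your linear bound at level $n_0$, with $P^{(n_0)}$ and sums up to $n_0$, not at level $n$. The $P$-weighted sums are not monotone in the level, so you cannot simply replace $n_0$ by $n$. You must split the induction into two cases: $n_0<n$, where you use the bound at $n_0$ and its monotonicity, and $n_0=n$.
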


\begin{lemma}\label{P}
For the sequence $(P_{n-j}^{(n)})_{j=1}^n$, some properties are given in \cite{LiaoH2018L1}.
\begin{align*}
&\sum_{j=k}^nP_{n-j}^{(n)}A_{j-k}^{(j)}\equiv1,~~1\leq k\leq n,\\
&0\leq P_{n-j}^{(n)}\leq \pi_A\Gamma{(2-\beta)}\tau_j^\beta,~~\sum_{j=1}^nP_{n-j}^{(n)}\omega_{1-\beta}(t_j)\leq C,~~ 1\leq j\leq n\leq N.
\end{align*}
\end{lemma}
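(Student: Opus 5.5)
The plan is to derive all three properties from the definition of $P_{n-j}^{(n)}$ and the kernel properties \textbf{A1}–\textbf{A2}, following the complementary discrete convolution kernel framework of \cite{LiaoH2018L1}.

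\emph{The identity.} Fix $n$ and argue by backward induction on $k$. For $k=n$ the identity $P_0^{(n)}A_0^{(n)}=1$ is the definition of $P_0^{(n)}$. For $k<n$, write
\[
P_{n-k}^{(n)}A_0^{(k)}=\sum_{j=k+1}^n\big(A_{j-k-1}^{(j)}-A_{j-k}^{(j)}\big)P_{n-j}^{(n)}.
\]
Adding $\sum_{j=k+1}^nP_{n-j}^{(n)}A_{j-k}^{(j)}$ to both sides gives
\[
\sum_{j=k}^nP_{n-j}^{(n)}A_{j-k}^{(j)}=\sum_{j=k+1}^nP_{n-j}^{(n)}A_{j-(k+1)}^{(j)},
\]
which equals $1$ by the induction hypothesis. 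Swapping the order of summation then yields the discrete inversion formula
\[
\sum_{j=1}^nP_{n-j}^{(n)}\sum_{k=1}^jA_{j-k}^{(j)}\nabla_\tau\phi^k=\phi^n-\phi^0
\]
for any grid function $\phi$. This formula is what I will use for the last estimate.

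\emph{Sign and the bound on $P_{n-j}^{(n)}$.} Nonnegativity follows by the same backward induction: by \textbf{A1} every difference $A_{j-k-1}^{(j)}-A_{j-k}^{(j)}$ is nonnegative, and $A_0^{(k)}>0$. Taking $k=j$ in the identity, all summands are nonnegative, so
\[
P_{n-j}^{(n)}A_0^{(j)}\le\sum_{i=j}^nP_{n-i}^{(n)}A_{i-j}^{(i)}=1,
\]
i.e.\ $P_{n-j}^{(n)}\le 1/A_0^{(j)}$. It then suffices to show $A_0^{(j)}\ge 1/(\pi_A\Gamma(2-\beta)\tau_j^\beta)$. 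For the $L1$ kernel this is an equality with $\pi_A=1$, since $A_0^{(j)}=\omega_{2-\beta}(\tau_j)/\tau_j$. For the Alikhanov kernel I would estimate $a_0^{(j)}+\rho_{j-1}b_1^{(j)}$ directly from its integral representation, using $b_1^{(j)}\ge0$ and $\theta=\beta/2$. This is the lower bound on the leading kernel that \cite{LiaoH2021L2_1} obtains with $\pi_A=11/4$.

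\emph{The weighted sum $\sum_jP_{n-j}^{(n)}\omega_{1-\beta}(t_j)$.} I would choose a bounded, increasing auxiliary function $\phi$, concretely $\phi(t)=\pi_A\,\omega_{1+\beta}(t)$ or a close variant. For this $\phi$ I would show the pointwise lower bound
\[
\sum_{k=1}^jA_{j-k}^{(j)}\nabla_\tau\phi(t_k)\ge\omega_{1-\beta}(t_j),\qquad 1\le j\le N.
\]
To obtain it, apply \textbf{A2} termwise. The nonnegative factors $\nabla_\tau\phi(t_k)/\tau_k$ are compared with $s$ times a kernel on $[t_{k-1},t_k]$, chosen so as to cancel the $1/s$ weight in \textbf{A2}. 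This reduces the left side to $\pi_A^{-1}$ times a continuous convolution $\int_0^{t_j}\omega_{1-\beta}(t_j-s)\psi(s)\,ds$. That convolution is evaluated by the semigroup property $\omega_a*\omega_b=\omega_{a+b}$. Once the lower bound holds, multiply by $P_{n-j}^{(n)}\ge0$, sum over $j$, and apply the inversion formula. This gives
\[
\sum_jP_{n-j}^{(n)}\omega_{1-\beta}(t_j)\le\phi(t_n)-\phi(0)\le C(T).
\]

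\emph{Main obstacle.} I expect the hardest step to be this last lower bound: choosing $\phi$ so that the $ds/s$ weight in \textbf{A2} produces exactly $\omega_{1-\beta}(t_j)$ rather than a weaker power of $t_j$. The first cell $k=1$, where $s\to0$, needs particular care. If a direct choice fails, I would first try splitting off the $k=1$ term and bounding it by $A_{j-1}^{(j)}\phi(t_1)$. I would then use monotonicity of the kernels (\textbf{A1}) together with the graded-mesh relation $t_1\le t_j$ to absorb that term.
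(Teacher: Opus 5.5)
Your arguments for the identity, the inversion formula, nonnegativity, and $P^{(n)}_{n-j}\le 1/A^{(j)}_0\le\pi_A\Gamma(2-\beta)\tau_j^\beta$ are correct. They are the standard ones from \cite{LiaoH2018L1}, which the paper simply cites without proof. With the correct form of \textbf{A2} (see below), the lower bound on $A^{(j)}_0$ is just \textbf{A2} at $k=j$, so your separate Alikhanov computation is valid but unnecessary.

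The gap is in the last estimate. Your concrete choice $\phi=\pi_A\omega_{1+\beta}$ cannot satisfy $\sum_{k=1}^jA^{(j)}_{j-k}\nabla_\tau\phi(t_k)\ge\omega_{1-\beta}(t_j)$. Already for $j=1$ and the $L1$ kernel, the left side equals $A^{(1)}_0\,\pi_A\omega_{1+\beta}(t_1)=\pi_A/\big(\Gamma(2-\beta)\Gamma(1+\beta)\big)$, a constant. The right side is $\omega_{1-\beta}(t_1)=t_1^{-\beta}/\Gamma(1-\beta)$, which tends to $\infty$ as $N\to\infty$. The continuous analogue shows why no smooth variant works. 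You get $\omega_{1-\beta}*\omega_\beta=\omega_1\equiv1$, and producing the full singularity $\omega_{1-\beta}(t)$ via $\omega_a*\omega_b=\omega_{a+b}$ would require $\phi'=\omega_0$, i.e.\ a jump of $\phi$ at $t=0$.

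Moreover, the $1/s$ weight you plan to cancel is a misprint. As printed, the right side of \textbf{A2} is $+\infty$ for $k=1$, because $\int_0^{t_1}\omega_{1-\beta}(t_n-s)\,s^{-1}ds=\infty$, so it cannot be a valid hypothesis. The assumption in \cite{LiaoH2018L1} reads $A^{(n)}_{n-k}\ge\frac{1}{\pi_A\tau_k}\int_{t_{k-1}}^{t_k}\omega_{1-\beta}(t_n-s)\,ds$. Your fallback of splitting off the $k=1$ term does not rescue $\phi=\pi_A\omega_{1+\beta}$ either, since that term carries the small factor $\phi(t_1)\sim t_1^{\beta}$.

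The missing idea is that the right test function is exactly such a jump: $\phi^0=0$ and $\phi^k=1$ for $k\ge1$, which amounts to using your identity with $k=1$. Apply \textbf{A2} (correct form) at $k=1$, together with $\omega_{1-\beta}(t_j-s)\ge\omega_{1-\beta}(t_j)$ for $0<s<t_1$. This gives $A^{(j)}_{j-1}\ge\pi_A^{-1}\omega_{1-\beta}(t_j)$, hence
\[
\sum_{j=1}^nP^{(n)}_{n-j}\omega_{1-\beta}(t_j)\le\pi_A\sum_{j=1}^nP^{(n)}_{n-j}A^{(j)}_{j-1}=\pi_A .
\]
This is the $m=0$ case of $\sum_{j}P^{(n)}_{n-j}\omega_{1+m\beta-\beta}(t_j)\le\pi_A\omega_{1+m\beta}(t_n)$ in \cite{LiaoH2018L1}. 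The first cell is the whole argument: no semigroup identity or mesh-grading relation is needed, and the constant is $C=\pi_A$.
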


\subsection{Convergence analysis}
In this part, we propose convergence estimates for the numerical schemes (\ref{num-scheme-1}) and (\ref{num-scheme-2}). First, the convergence analysis of (\ref{num-scheme-1}) is considered under the following conditions: $a_0\in D((-\Delta)^{\gamma_1+\epsilon})$, $\gamma_1\in(\frac{1}{\alpha}+\frac{1}{4},\frac32]$ and $a_1\in D((-\Delta)^{\gamma_2+\epsilon})$, $\gamma_2\in(\frac{1}{4},1]$, $0<\epsilon \ll 1$. This is consistent with the feasibility conditions for the SFOR framework. 
Denote $\mathcal{R}_v^{n-\theta}:=(\bar\partial_t^\beta v)^{n-\theta}-(\partial_t^\beta v)^{n-\theta}$, $\mathcal{R}_u^{n-\theta}:=(\partial_t^\beta u)^{n-\theta}-(\bar\partial_t^\beta u)^{n-\theta}$, $R_v^{n-\theta}=v(t_{n-\theta})-\big(\theta v(t_{n-1})+(1-\theta)v(t_n)\big)$, $\Delta R_u^{n-\theta}=\Delta u(t_{n-\theta})-\big(\theta\Delta u(t_{n-1})+(1-\theta)\Delta u(t_n)\big)$.

\begin{equation}\nonumber
\begin{cases}
  (\bar\partial_t^\beta v)^{n-\theta} - \Delta u^{n-\theta} = a_1(x)\omega_{2-\alpha}(t_{n-\theta})+g(x,t_{n-\theta})+\mathcal{R}_v^{n-\theta}+\Delta R_u^{n-\theta}, & 1\le n\le N, \\
  v^{n-\theta}  = (\bar\partial_t^\beta u)^{n-\theta} + \mathcal{R}_u^{n-\theta}+R_v^{n-\theta}, & 1\le n\le N, \\
  u(x,0) = v(x,0)=0, & x \in \Omega, \\
  u(x,t) = v(x,t)=0, & (x,t) \in \partial \Omega \times (0,T).
\end{cases}
\end{equation}
Denote $\bar u^n:=u^n-U^n$ and $\bar v^n:=v^n-V^n$. One has the error system (\ref{error-system}):
\begin{equation}\label{error-system}
\begin{cases}
  (\bar\partial_t^\beta \bar v)^{n-\theta} - \Delta \bar u^{n-\theta} = \mathcal{R}_v^{n-\theta}+\Delta R_u^{n-\theta}, & 1\le n\le N, \\
  \bar v^{n-\theta}  = (\bar\partial_t^\beta \bar u)^{n-\theta} + \mathcal{R}_u^{n-\theta}+R_v^{n-\theta}, & 1\le n\le N, \\
  \bar u(x,0) = \bar v(x,0)=0, & x \in \Omega, \\
  \bar u(x,t) = \bar v(x,t)=0, & (x,t) \in \partial \Omega \times (0,T).
\end{cases}
\end{equation}

\textbf{For $L1$ approximation $(\theta=0)$:} The error estimate of the proposed approximation is estimated in the next lemma.
If $a_0\in D((-\Delta)^{\gamma})$, $\gamma\in(\frac{1}{\alpha},1]$ and $a_1,f\in L^2(\Omega)$, based on the regularity result of Lemma \ref{thm-reg-L2-v}, the estimate of $\mathcal{R}_v^n$ can be deduced as follows.

\begin{lemma}\label{lem-r-1}
For $\mathcal{R}_v^n$ in (\ref{error-system}), it holds that
\[
\|\mathcal{R}_v^n\|_{L^2} \le Ct_n^{-\beta}N^{-q_1}\Big(\|(-\Delta)^{\gamma}a_0\|_{L^2}+\|a_1\|_{L^2}+\|f\|_{L^2}\Big),~~q_1=\min\{r(1-\beta),2-\beta\}.
\]
\end{lemma}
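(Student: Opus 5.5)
We follow the standard local truncation analysis for the nonuniform $L1$ formula on graded meshes, as in \cite{LiaoH2018L1,Stynes2017}. The only input needed from the paper is the regularity bound of Lemma \ref{thm-reg-L2-v}, namely
\[
\|\partial_t^m v(t)\|_{L^2}\le C t^{1-\beta-m}\mathcal{M},\qquad m=1,2,
\]
with $\mathcal{M}:=\|(-\Delta)^{\gamma}a_0\|_{L^2}+\|a_1\|_{L^2}+\|f\|_{L^2}$. The term involving $a_0$ carries the exponent $\gamma\alpha-\beta-m$, which is larger than $1-\beta-m$ because $\gamma\alpha>1$; on $(0,T]$ it is therefore dominated by the $t^{1-\beta-m}$ term. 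So $v$ behaves like $t^{\sigma}$ with $\sigma=1-\beta\in(0,1)$.

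Write $\Pi_{1,k}v$ for the linear interpolant of $v$ on $[t_{k-1},t_k]$. Integrating by parts on each subinterval gives
\[
\mathcal{R}_v^n=\sum_{k=1}^n\int_{t_{k-1}}^{t_k}\omega_{1-\beta}(t_n-s)\,(\Pi_{1,k}v-v)'(s)\,ds
=-\sum_{k=1}^n\int_{t_{k-1}}^{t_k}\omega_{-\beta}(t_n-s)\,(\Pi_{1,k}v-v)(s)\,ds ,
\]
where $\omega_{-\beta}(t)=-\beta t^{-\beta-1}/\Gamma(1-\beta)$. We split the sum into three groups.

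\textbf{Step 1: $k=1$.} Since $\|\partial_t v\|\le Cs^{-\beta}\mathcal{M}$ is integrable, $|(\Pi_{1,1}v-v)(s)|\le C\tau_1^{1-\beta}\mathcal{M}$ on $[0,t_1]$. Here the kernel is used in its $L^2$-in-time-integrated form rather than pointwise.
\begin{itemize}
\item For $n=1$ we estimate directly: $\|\mathcal{R}_v^1\|\le C\tau_1^{-\beta}\tau_1^{1-\beta}$, which is $t_1^{-\beta}N^{-r(1-\beta)}$ since $\tau_1=TN^{-r}$.
\item For $n\ge2$ the kernel is bounded by $C(t_n-t_1)^{-\beta-1}$, and the $k=1$ term contributes $C\tau_1^{2-\beta}t_n^{-\beta-1}\le Ct_n^{-\beta}N^{-r(1-\beta)}$.
\end{itemize}

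\textbf{Step 2: $2\le k\le n-1$.} Use $|\Pi_{1,k}v-v|\le C\tau_k^2\max_{[t_{k-1},t_k]}\|v''\|\le C\tau_k^2t_{k-1}^{-1-\beta}\mathcal{M}$ together with the graded-mesh facts $\tau_k\le CN^{-1}t_k^{1-1/r}$ and $t_k\le 2^r t_{k-1}$. This gives
\[
\sum_{k=2}^{n-1}\tau_k^2 t_k^{-1-\beta}\int_{t_{k-1}}^{t_k}(t_n-s)^{-\beta-1}ds .
\]
We split this sum at $t_k\approx t_n/2$.
\begin{itemize}
\item For $t_k\le t_n/2$, the kernel is bounded by $Ct_n^{-1-\beta}$. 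The sum becomes $N^{-2}t_n^{-1-\beta}\sum_k \tau_k\,t_k^{1-2/r-\beta}$. Comparing with $\int s^{1-\beta-2/r}\,ds$ gives the familiar $\min$ between $N^{-r(1-\beta)}$ and $N^{-2}$, with a logarithm in the borderline case. So this part is bounded by $t_n^{-\beta}N^{-\min\{r(1-\beta),2\}}$.
\item For $t_k> t_n/2$, $t_k\sim t_n$. We sum the kernel integrals telescopically, which yields $\le C\tau_{n-1}^{-\beta}$ after pulling out $\tau_k^2\le\tau_n^2$. The total is $C\tau_n^{2-\beta}t_n^{-1-\beta}$.
\end{itemize}

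\textbf{Step 3: $k=n$.} Here the kernel is nonintegrable, so we return to the first representation. Writing $\Pi_{1,n}v-v$ via Taylor expansion yields $\le C\tau_n^{2-\beta}\max\|v''\|\le C\tau_n^{2-\beta}t_{n-1}^{-1-\beta}$.

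\textbf{Step 4: combining the near-diagonal terms.} Steps 2 and 3 both produce $\tau_n^{2-\beta}t_n^{-1-\beta}$. With $\tau_n\le CN^{-1}t_n^{1-1/r}$ this equals
\[
t_n^{-\beta}\,N^{-(2-\beta)}\,t_n^{(2-\beta)(1-1/r)-1}.
\]
Using $t_n\ge TN^{-r}$ when the exponent $(2-\beta)(1-1/r)-1$ is negative, this is bounded by $t_n^{-\beta}N^{-\min\{r(1-\beta),2-\beta\}}$. Collecting Steps 1–4 gives the stated $q_1$. The $N^{-2}$ from Step 2 is absorbed because $2>2-\beta$.

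The main obstacle is this last bookkeeping: making the near-diagonal terms (Step 3 and the upper part of Step 2) produce exactly the $2-\beta$ rate with the prefactor $t_n^{-\beta}$. The exponent analysis at $t_n\ge\tau_1$ must be done carefully to land on the minimum $q_1$. Alternatively, we would cite the corresponding truncation lemma of \cite{LiaoH2018L1} with $\sigma=1-\beta$.
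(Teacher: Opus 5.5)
Your proposal is correct, but it handles the interior subintervals differently from the paper.

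\textbf{End intervals.} The first and last subintervals are treated essentially as in the paper. For $n\ge2$ you integrate by parts on $[0,t_1]$ and get $C\tau_1^{2-\beta}t_n^{-1-\beta}$. The paper does not integrate by parts there; it simply uses $(t_n-s)^{-\beta}\le(t_n-t_1)^{-\beta}$ and gets $Ct_n^{-\beta}\tau_1^{1-\beta}$. Both suffice.

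\textbf{Interior intervals in the paper.} The paper never separates near and far indices. It factors each local term as $\Phi_k\cdot\tau_{k+1}^{\beta}\int_{t_k}^{t_{k+1}}s^{-\beta}(t_n-s)^{-\beta-1}ds$, where
\[
\Phi_k=\tau_{k+1}^{2-\beta}t_{k+1}^{\beta}\sup_{(t_k,t_{k+1})}s^{-\beta-1}\le C(\tau_{k+1}/t_{k+1})^{2-\beta}t_{k+1}^{1-\beta}\le CN^{-q_1}
\]
uniformly in $k$. It then uses $\tau_{k+1}\le\tau_{n-1}$ and $\int_{t_1}^{t_{n-1}}s^{-\beta}(t_n-s)^{-\beta-1}ds\le Ct_n^{-\beta}\tau_{n-1}^{-\beta}$ (only this one integral is split at $t_n/2$) to sum the weights to $Ct_n^{-\beta}$. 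This yields $q_1$ in one stroke, with no case analysis on the exponent $1-\beta-2/r$.

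\textbf{Your route.} You split into a far field ($t_k\le t_n/2$, Riemann-sum comparison) and a near field (telescoping), and merge the near field with the last interval. This is the classical Stynes-type bookkeeping. It costs more cases, but it shows more: the far field converges at rate $\min\{r(1-\beta),2\}$, and the $2-\beta$ barrier comes only from the intervals adjacent to $t_n$.

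\textbf{One point to tighten.} In the borderline case $r(2-\beta)=2$ you have $q_1=r(1-\beta)<2-\beta$, so the $\log N$ factor you mention would not be acceptable for the lemma as stated. It disappears if you keep the factor $t_n^{-1}$ rather than bounding it crudely:
\[
N^{-2}t_n^{-1-\beta}\log(t_n/t_1)=t_n^{-\beta}N^{-2}t_1^{-1}\,\frac{t_1}{t_n}\log\frac{t_n}{t_1}\le Ct_n^{-\beta}N^{r-2},
\]
and in this case $r-2=-r(1-\beta)$.

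A minor slip: the overall sign in your integration-by-parts identity should be $+$, since $\frac{d}{ds}\omega_{1-\beta}(t_n-s)=-\omega_{-\beta}(t_n-s)$. This does not affect any of the norm bounds.
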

\begin{proof}
For $n=1$, it holds that
\begin{align*}
\|\mathcal{R}_v^1\|_{L^2}
&\le C\bigg\|\int_0^{\tau_1}(\tau_1-s)^{-\beta}\bigg(v'(s)-\frac{v^1-v^0}{\tau_1}\bigg)ds\bigg\|_{L^2}\\
&\le C\bigg\|\int_0^{\tau_1}(\tau_1-s)^{-\beta}\bigg(v'(s)-\frac{1}{\tau_1}\int_0^{\tau_1}v'(y)dy\bigg)ds\bigg\|_{L^2}\\
&\le C\tau_1^{-1}\bigg\|\int_0^{\tau_1}(\tau_1-s)^{-\beta}\int_0^{\tau_1}\big(v'(s)-v'(y)\big)dyds\bigg\|_{L^2}\\
&\le C\tau_1^{-1}\int_0^{\tau_1}(\tau_1-s)^{-\beta}\int_0^{\tau_1}\big(\|v'(s)\|_{L^2}+\|v'(y)\|_{L^2}\big)dyds\\
&\le C\tau_1^{-1}\int_0^{\tau_1}(\tau_1-s)^{-\beta}\int_0^{\tau_1}\big(s^{-\beta}+y^{-\beta}\big)dyds\\
&\le C\tau_1^{1-2\beta}.
\end{align*}
For the case $n\geq2$, one has
\begin{align*}
\|\mathcal{R}_v^n\|_{L^2}
&\le C\bigg\|\sum_{k=0}^{n-1}\int_{t_k}^{t_{k+1}}(t_n-s)^{-\beta}\bigg(v'(s)-\frac{v^{k+1}-v^k}{\tau_{k+1}}\bigg)ds\bigg\|_{L^2}\\
&\le C\sum_{k=0}^{n-1}\bigg\|\int_{t_k}^{t_{k+1}}(t_n-s)^{-\beta}\bigg(v'(s)-\frac{v^{k+1}-v^k}{\tau_{k+1}}\bigg)ds\bigg\|_{L^2}\\
&:=C\sum_{k=0}^{n-1}\mathcal{R}_v^{n,k}.
\end{align*}
Then, we consider the estimate of $\mathcal{R}_v^n$ term by term. For the case $k=0$, it gives that
\begin{align*}
\mathcal{R}_v^{n,0}
&\le C\bigg\|\int_{0}^{t_1}(t_n-s)^{-\beta}\bigg(v'(s)-\frac{v^1-v^0}{\tau_1}\bigg)ds\bigg\|_{L^2}\\
&\le C\int_{0}^{t_1}(t_n-s)^{-\beta}\|v'(s)\|_{L^2}ds+C\tau_1^{-1}\int_0^{t_1}(t_n-s)^{-\beta}\int_0^{t_1}\|v'(y)\|_{L^2}dyds\\
&\le C(t_n-t_1)^{-\beta}\int_0^{t_1}s^{-\beta}ds+C\tau_1^{-1}\int_0^{t_1}(t_n-s)^{-\beta}\int_0^{t_1}y^{-\beta}dyds\\
&\le C(t_n-t_1)^{-\beta}\tau_1^{1-\beta}+C\tau_1^{-\beta}\int_0^{t_1}(t_n-s)^{-\beta}ds\\
&\le C(t_n-t_1)^{-\beta}\tau_1^{1-\beta}\\
&\le Ct_n^{-\beta}\tau_1^{1-\beta},
\end{align*}
where $(t_n-t_1)^{-\beta}=t_n^{-\beta}(1-t_1/t_n)^{-\beta}\le Ct_n^{-\beta}$.

For the case $k= n-1$, following identity is used
\begin{align*}
v'(s)-\frac{v^{n}-v^{n-1}}{\tau_{n}}=\frac{1}{\tau_{n}}\int_{t_{n-1}}^{t_{n}}v'(s)-v'(y)dy=\frac{1}{\tau_{n}}\int_{t_{n-1}}^{t_{n}}\int_{y}^sv''(z)dzdy.
\end{align*}
From Lemma \ref{thm-reg-L2-v}, it holds that
\begin{align*}
\bigg\|v'(s)-\frac{v^{n}-v^{n-1}}{\tau_{n}}\bigg\|_{L^2}
\le \frac{1}{\tau_{n}}\int_{t_{n-1}}^{t_{n}}\int_{\min\{s,y\}}^{\max\{s,y\}}\|v''(z)\|_{L^2}dzdy\le C\tau_{n}\sup_{s\in(t_{n-1},t_n)} s^{-\beta-1}.
\end{align*}
And, one has
\begin{align*}
\mathcal{R}_v^{n,n-1}
&\le C\bigg\|\int_{t_{n-1}}^{t_{n}}(t_n-s)^{-\beta}\bigg(v'(s)-\frac{v^{n}-v^{n-1}}{\tau_{n}}\bigg)ds\bigg\|_{L^2}\\
&\le C\tau_{n}\sup_{s\in(t_{n-1},t_n)} s^{-\beta-1}\int_{t_{n-1}}^{t_{n}}(t_n-s)^{-\beta}ds\\
&\le C\tau_n^{2-\beta}\sup_{s\in(t_{n-1},t_n)} s^{-\beta-1}\\
&\le C\bigg(\frac{\tau_n}{t_n}\bigg)^{2-\beta}t_n^{1-2\beta}\\
&\le Ct_n^{-\beta}N^{-q_1}t_n^{-q_1/r}t_n^{1-\beta}\\
&\le Ct_n^{-\beta}N^{-q_1},
\end{align*}
the inequality holds using $\tau_n\leq CN^{-1}t_n^{1-\frac{1}{r}}$.
For the case $1\le k\le n-2$, we have
\begin{align*}
\mathcal{R}_v^{n,k}
&\le C\bigg\|\int_{t_{k}}^{t_{k+1}}(t_n-s)^{-\beta}\bigg(v'(s)-\frac{v^{k+1}-v^{k}}{\tau_{k+1}}\bigg)ds\bigg\|_{L^2}\\
&\le C\int_{t_{k}}^{t_{k+1}}(t_n-s)^{-\beta-1}\bigg\|v(s)-\frac{(s-t_k)v^{k+1}+(t_{k+1}-s)v^{k}}{\tau_{k+1}}\bigg\|_{L^2}ds\\
&\le C\tau_{k+1}^2\sup_{s\in(t_k,t_{k+1})}s^{-\beta-1}\int_{t_{k}}^{t_{k+1}}(t_n-s)^{-\beta-1}ds\\
&\le C\bigg(\tau_{k+1}^{2-\beta}t_{k+1}^\beta\sup_{s\in(t_k,t_{k+1})}s^{-\beta-1}\bigg)\bigg(\tau_{k+1}^\beta t_{k+1}^{-\beta}\int_{t_k}^{t_{k+1}}(t_n-s)^{-\beta-1}ds\bigg)\\
&\le C\bigg(\tau_{k+1}^{2-\beta}t_{k+1}^\beta\sup_{s\in(t_k,t_{k+1})}s^{-\beta-1}\bigg)\bigg(\tau_{k+1}^\beta \int_{t_k}^{t_{k+1}}s^{-\beta}(t_n-s)^{-\beta-1}ds\bigg).
\end{align*}
Denote $\Phi_k:=\tau_{k+1}^{2-\beta}t_{k+1}^\beta\sup_{s\in(t_k,t_{k+1})}s^{-\beta-1}\sim N^{-q_1}$. Hence, we obtain
\begin{align*}
\sum_{k=1}^{n-2}\mathcal{R}_v^{n,k}
&\le C\sum_{k=1}^{n-2}\Phi_k\tau_{k+1}^\beta \int_{t_k}^{t_{k+1}}s^{-\beta}(t_n-s)^{-\beta-1}ds\\
&\le C\max_k\Phi_k\bigg(\tau_{n-1}^\beta \int_{t_1}^{t_{n-1}}s^{-\beta}(t_n-s)^{-\beta-1}ds\bigg)\\
&\le Ct_n^{-\beta}N^{-q_1},
\end{align*}
the last inequality holds by
\begin{align*}
\int_{t_1}^{t_{n-1}}s^{-\beta}(t_n-s)^{-\beta-1}ds
&\le \int_{t_1}^{t_{n}/2}s^{-\beta}(t_n-s)^{-\beta-1}ds+\int_{t_{n}/2}^{t_{n-1}}s^{-\beta}(t_n-s)^{-\beta-1}ds\\
&\le C(t_n/2)^{1-\beta}(t_n-t_n/2)^{-\beta-1}+C(t_n/2)^{-\beta}\tau_n^{-\beta}\\
&\le Ct_n^{-\beta}\tau_n^{-\beta}+Ct_n^{-\beta}\tau_n^{-\beta}\\
&\le Ct_n^{-\beta}\tau_{n-1}^{-\beta}.
\end{align*}
The proof completes based on above estimates.
\end{proof}

Following the idea in Lemma \ref{lem-r-1}, if $a_0\in D((-\Delta)^{\gamma_1+\epsilon})$, $\gamma_1\in(\frac{1}{\alpha}+\frac14,\frac32]$ and $a_1,f\in D((-\Delta)^{\gamma_2+\epsilon})$, $\gamma_2\in(\frac{1}{4},1]$ and $0<\epsilon \ll 1$, from the regularity of $u$ in Lemma \ref{thm-reg-delta-2}, we get the estimate for $\mathcal{R}_u^n$ through the following lemma.
\begin{lemma}\label{lem-r-2}
For $\mathcal{R}_u^n$ in (\ref{error-system}), it holds that
\[
\|\nabla \mathcal{R}_u^n\|_{L^2} \le Ct_n^{-\beta}N^{-q_2}\Big(\|(-\Delta)^{\gamma_1+\epsilon}a_0\|_{L^2}
+\|(-\Delta)^{\gamma_2+\epsilon}a_1\|_{L^2}+\|(-\Delta)^{\gamma_2+\epsilon}f\|_{L^2}\Big),
\]
where $q_2=\min\{r(1-\beta/2),2-\beta\}.$
\end{lemma}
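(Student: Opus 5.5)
The plan is to repeat the argument of Lemma \ref{lem-r-1} almost word for word, with $v$ replaced by $\nabla u$ and with the regularity input taken from Lemma \ref{thm-reg-delta-2} rather than Lemma \ref{thm-reg-L2-v}. Since $\|\nabla w\|_{L^2}=\|(-\Delta)^{\frac12}w\|_{L^2}$ on $H_0^1(\Omega)$, Lemma \ref{thm-reg-delta-2} gives
\begin{align*}
\|\partial_t^m\nabla u(t)\|_{L^2}\le C\big(1+t^{1-m-\frac{\alpha}{4}}\big)\mathcal{M},\qquad m=1,2,
\end{align*}
where $\mathcal{M}$ is the bracket of data norms in the statement. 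Because $1-\frac{\alpha}{4}=1-\frac{\beta}{2}$, for $t\le T$ this means $\|\nabla u'(t)\|\le Ct^{-\beta/2}\mathcal{M}$ and $\|\nabla u''(t)\|\le Ct^{-\beta/2-1}\mathcal{M}$. The singularity exponent $\sigma:=1-\beta/2$ thus plays the role that $1-\beta$ played for $v$, which explains the rate $q_2=\min\{r\sigma,2-\beta\}$. I write $\nabla\mathcal{R}_u^n=\sum_{k=0}^{n-1}\int_{t_k}^{t_{k+1}}\omega_{1-\beta}(t_n-s)\big(\nabla u'(s)-\nabla(u^{k+1}-u^k)/\tau_{k+1}\big)ds$ and estimate it interval by interval.

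\emph{Starting intervals.} For $n=1$, I bound both $\nabla u'(s)$ and the difference quotient directly using $\int_0^{\tau_1}s^{-\beta/2}ds\le C\tau_1^{1-\beta/2}$. This gives $\|\nabla\mathcal{R}_u^1\|\le C\tau_1^{1-\beta/2}\cdot\tau_1^{-\beta}\cdot\tau_1^{\ldots}$, which after simplification is $C\tau_1^{\sigma-\beta}$, and $\tau_1^{\sigma}=T^{\sigma}N^{-r\sigma}$. The result therefore has the form $Ct_1^{-\beta}N^{-r\sigma}$. For $n\ge2$, the $k=0$ piece is handled exactly as $\mathcal{R}_v^{n,0}$ was: using $(t_n-t_1)^{-\beta}\le Ct_n^{-\beta}$, I get $\mathcal{R}_u^{n,0}\le Ct_n^{-\beta}\tau_1^{\sigma}\le Ct_n^{-\beta}N^{-r\sigma}$.

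\emph{Last interval.} For $k=n-1$ I use the double-integral identity for $u'(s)-(u^n-u^{n-1})/\tau_n$ together with the bound on $\nabla u''$. This gives $\mathcal{R}_u^{n,n-1}\le C\tau_n^{2-\beta}t_{n-1}^{-\beta/2-1}$. With $\tau_n\le CN^{-1}t_n^{1-1/r}$ and $t_{n-1}\ge ct_n$ for $n\ge2$, this equals $t_n^{-\beta}(\tau_n/t_n)^{2-\beta}t_n^{1-\beta/2}$. That quantity is bounded by $t_n^{-\beta}N^{-(2-\beta)}t_n^{\sigma-(2-\beta)/r}$ when $r\sigma\ge 2-\beta$, and by $t_n^{-\beta}N^{-r\sigma}$ (using $t_n\ge t_1$) otherwise.

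\emph{Interior intervals.} For $1\le k\le n-2$ I integrate by parts to move the derivative onto the kernel, as in Lemma \ref{lem-r-1}, and use the linear interpolation error $C\tau_{k+1}^2\sup\|\nabla u''\|$. This yields the factor $\Phi_k:=\tau_{k+1}^{2-\beta}t_{k+1}^{\beta}\sup_{(t_k,t_{k+1})}s^{-\beta/2-1}$ times $\tau_{k+1}^\beta\int_{t_k}^{t_{k+1}}s^{-\beta}(t_n-s)^{-\beta-1}ds$. The main obstacle is showing $\max_k\Phi_k\le CN^{-q_2}$ on the graded mesh. Using $\tau_{k+1}\sim N^{-1}t_{k+1}^{1-1/r}$, I get $\Phi_k\sim N^{-(2-\beta)}t_{k+1}^{(1-\beta/2)-(2-\beta)/r}$. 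If $r\sigma\ge2-\beta$ the exponent of $t_{k+1}$ is nonnegative and the bound is $N^{-(2-\beta)}$. Otherwise the exponent is negative and its worst case is $k=1$, where $t_2^{\sigma-(2-\beta)/r}\sim N^{-r\sigma+(2-\beta)}$, giving $N^{-r\sigma}$. Finally I sum the kernel integrals. Monotonicity $\tau_{k+1}\le\tau_{n-1}$ and the splitting at $t_n/2$ already established in Lemma \ref{lem-r-1} give $\tau_{n-1}^\beta\int_{t_1}^{t_{n-1}}s^{-\beta}(t_n-s)^{-\beta-1}ds\le Ct_n^{-\beta}$. Combining the four cases yields the claim with $q_2=\min\{r(1-\beta/2),2-\beta\}$.
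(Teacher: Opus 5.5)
Your proposal is correct and takes the route the paper indicates; the paper gives no separate proof, only ``following the idea in Lemma \ref{lem-r-1}''. You rerun the interval-by-interval $L1$ truncation argument for $\nabla u$ with the bounds $\|\nabla u'(t)\|_{L^2}\le Ct^{-\beta/2}$ and $\|\nabla u''(t)\|_{L^2}\le Ct^{-1-\beta/2}$ from Lemma \ref{thm-reg-delta-2}, so $1-\beta/2$ replaces $1-\beta$ in the grading exponent. The one imprecise step is $n=1$: the $\nabla u'(s)$ term needs the Beta integral $\int_0^{\tau_1}(\tau_1-s)^{-\beta}s^{-\beta/2}\,ds=C\tau_1^{1-3\beta/2}$, not just $\int_0^{\tau_1}s^{-\beta/2}\,ds$, though your resulting bound $C\tau_1^{\sigma-\beta}$ is correct.
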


\textbf{For Alikhanov approximation $(\theta=\beta/2)$:}
The global consistency error estimate of the Alikhanov approximation is estimated in the
next lemmas.
\begin{lemma}{\rm{(\cite[Lemma 3.6 and Theorem 3.9]{LiaoH2021L2_1})}}\label{lem-r-v}
    Assume that \( v \in C^3((0, T]) \) and there exists a constant \( C>0 \) such that
    \[
    \|v'''(t)\|_{L^2} \leq C (1 + t^{(1-\beta)-3}), \quad 0 < t \leq T.
    \]
    Then
 \begin{align*}
    \sum_{j=1}^n P_{n-j}^{(n)} \|\mathcal{R}_v^{j-\theta}\|_{L^2} &\leq 
    C\left( \frac{\tau_1^{1-\beta}}{1-\beta} + t_1^{(1-\beta)-3} \tau_2^3 + 
    \frac{1}{1 - \beta} \max_{2 \leq k \leq n} \frac{t_k^\beta t_{k-1}^{(1-\beta)-3} \tau_k^3}{\tau_{k-1}^\beta} \right)\\
    &\leq CN^{-\min\{r(1-\beta),2\}}.
 \end{align*}

    Similarly, we yield
 \begin{align*}    
    \sum_{j=1}^n P_{n-j}^{(n)} \|\nabla\mathcal{R}_u^{j-\theta}\|_{L^2} &\leq  C\left( \frac{\tau_1^{1-\beta/2}}{1-\beta/2} + t_1^{(1-\beta/2)-3} \tau_2^3 +  \frac{1}{1-\beta} \max_{2 \leq k \leq n} \frac{t_k^\beta t_{k-1}^{(1-\beta/2)-3} \tau_k^3}{\tau_{k-1}^\beta} \right)\\
    &\leq CN^{-\min\{r(1-\beta/2),2\}}.     
 \end{align*}
\end{lemma}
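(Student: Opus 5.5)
The plan is to treat Lemma \ref{lem-r-v} as a direct application of the general global consistency result for the Alikhanov formula in \cite[Lemma 3.6 and Theorem 3.9]{LiaoH2021L2_1}. That result states: if a function $w$ satisfies $\|w'''(t)\|\le C(1+t^{\sigma-3})$ with $\sigma\in(0,1)$, then on any mesh satisfying \textbf{A1}--\textbf{A2}, $\sum_{j}P^{(n)}_{n-j}\|\mathcal R_w^{j-\theta}\|$ is bounded by the three-term expression displayed in the statement, with $\sigma$ in place of the regularity exponent. So the work splits into two parts. First, verify the hypotheses with the right exponent for each of $v$ and $\nabla u$. Second, evaluate the three-term bound on the graded mesh $t_k=T(k/N)^r$.

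For the first part, take $\sigma=1-\beta$ for $v$. The bound $\|v'''\|\le C(1+t^{-2-\beta})$ follows from Lemma \ref{thm-reg-delta-v} (or Lemma \ref{thm-reg-L2-v}) with $m=3$. For $\nabla u$, use Lemma \ref{thm-reg-delta-2} with $m=3$, which gives $\|\partial_t^3(-\Delta)^{1/2}u\|\le C(1+t^{-2-\alpha/4})$, i.e. $\sigma=1-\beta/2$. The hypothesis for $\nabla u$ follows because $\|\nabla w\|=\|(-\Delta)^{1/2}w\|$ on $H_0^1$.

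I would then check that the Alikhanov local truncation structure is the same as in \cite{LiaoH2021L2_1}. That paper splits $\mathcal R^{j-\theta}$ into:
\begin{itemize}
\item the first-cell linear-interpolation error, estimated via $\int_0^{t_1}\|w'\|$ and giving $\tau_1^{\sigma}/\sigma$;
\item the second-cell term, bounded by $t_1^{\sigma-3}\tau_2^3$;
\item the quadratic-interpolation errors on cells $k\ge2$, weighted by $\omega_{1-\beta}$ and summed against $P^{(n)}_{n-j}$ using Lemma \ref{P}, which gives the maximum term.
\end{itemize}
Only the kernel coefficients $A^{(n)}_{n-k}$ with \textbf{A1}--\textbf{A2} (with $\pi_A=11/4$, $\rho=7/4$) enter. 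The exponent $\sigma$ enters only through the regularity bound, so replacing $\sigma$ by $1-\beta/2$ is legitimate. One caveat: the factor $\frac{1}{1-\beta}$ in front of the maximum comes from the kernel, not from $\sigma$, which explains why it stays unchanged in the second display.

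For the second part, insert $\tau_k\le CN^{-1}t_k^{1-1/r}$, $t_k\le Ct_{k-1}$ and $\tau_k\le C\tau_{k-1}$ for $k\ge2$. The first two terms are $O(N^{-r\sigma})$. The generic term is
\[
\frac{t_k^\beta t_{k-1}^{\sigma-3}\tau_k^3}{\tau_{k-1}^\beta}\le C\,\tau_k^{3-\beta}t_k^{\sigma-3+\beta}\le C N^{-(3-\beta)}\,t_k^{\sigma-(3-\beta)/r}.
\]
Write $t_k^{\sigma-(3-\beta)/r}\sim (k/N)^{r\sigma-(3-\beta)}$. If $r\sigma\ge 3-\beta$, this is bounded and the term is $O(N^{-(3-\beta)})$. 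Otherwise the maximum is attained at $k=2$ and equals $O(N^{-r\sigma})$.

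The main obstacle is the claimed rate $\min\{r\sigma,2\}$ rather than $\min\{r\sigma,3-\beta\}$. Temporal order two for Alikhanov is usually limited by the $\tau_k^3$ local term together with $\tau_{k-1}^{-\beta}$, and the generic term above only limits the order to $3-\beta$. The cap at $2$ must therefore come from the $n$-dependent part of Theorem 3.9 in \cite{LiaoH2021L2_1}. Since $3-\beta>2$, the bound $\min\{r\sigma,3-\beta\}$ is stronger than needed and in particular implies $\min\{r\sigma,2\}$, so I would simply cite that theorem's final bound $CN^{-\min\{r\sigma,2\}}$. The remaining care point is the constants: they depend on $\beta$ and $r$ but not on $N$, and they stay finite because $\sigma$ is bounded away from zero.
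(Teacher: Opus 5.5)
Your proposal is correct and follows the paper's approach. The paper likewise just invokes \cite[Lemma 3.6 and Theorem 3.9]{LiaoH2021L2_1}, with $\sigma=1-\beta$ for $v$ and $\sigma=1-\beta/2$ for $\nabla u$ (the latter via Lemma \ref{thm-reg-delta-2} with $m=3$), and your explicit graded-mesh evaluation giving $O(N^{-\min\{r\sigma,3-\beta\}})$ is a valid way to fill in the second inequality that the paper leaves implicit. One inaccuracy: the cap at $2$ need not come from any other $n$-dependent part of the cited theorem, since it follows from the cruder bound $t_k^{\sigma-3+\beta}\tau_k^{3-\beta}\le t_k^{\sigma-2}\tau_k^{2}$ on the same maximum term; in any case your own computation already implies the stated rate, so nothing further is needed.
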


If $a_0\in D((-\Delta)^{\gamma})$, $\gamma\in(\frac{1}{\alpha}+\frac12,\frac32]$ and $a_1,f\in H^1(\Omega)$, from the regularity of $\nabla v$ in Lemma \ref{thm-reg-delta-v} and \cite[Lemma 3.8 and Theorem 3.9]{LiaoH2021L2_1}, we have the following lemma on estimating
the time weighted approximation.
\begin{lemma}\label{lem-nabla-v}
    Denote the local truncation error of $v^{n-\theta}$ (here $\theta=\beta/2$) by
    \[
    R_v^{n-\theta}=v(t_{n-\theta})-v^{n-\theta}\quad\text{for }1\leq n\leq N.
    \]
    Then the error satisfies
    \[
    \|\nabla R_v^{n-\theta}\|_{L^2}\leq C\Bigl(\tau_{1}^{1-\beta}/(1-\beta)+\max_{2\leq k\leq n}t_{k-1}^{(1-\beta)-2}\tau_{k}^{2}\Bigr)\leq CN^{-\min\{r(1-\beta),2\}}.
    \]
    
    In a similar fashion, we have
    \[     \|\Delta R_u^{n-\theta}\|_{L^2}\leq C\Bigl(\tau_{1}^{(1-\beta/2)}/(1-\beta/2)+\max_{2\leq k\leq n}t_{k-1}^{(1-\beta/2)-2}\tau_{k}^{2}\Bigr)\leq CN^{-\min\{r(1-\beta/2),2\}}.     \]
\end{lemma}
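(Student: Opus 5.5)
The statement to prove is Lemma \ref{lem-nabla-v}: it bounds the weighted-interpolation errors $R_v^{n-\theta}=v(t_{n-\theta})-\theta v(t_{n-1})-(1-\theta)v(t_n)$ (in the $H^1$ seminorm) and $\Delta R_u^{n-\theta}$. My plan is to follow \cite[Lemma 3.8 and Theorem 3.9]{LiaoH2021L2_1}, feeding in the regularity estimates of Section \ref{sec-reg}.

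The inputs are these. By Lemma \ref{thm-reg-delta-v}, $\|\partial_t^m\nabla v\|_{L^2}\le Ct^{1-\beta-m}$ for $m=1,2$. By Lemma \ref{thm-reg-delta}, $\|\partial_t^m\Delta u\|_{L^2}\le C(1+t^{1-\beta/2-m})$, since $\alpha/4=\beta/2$. So $\nabla v$ behaves like $t^{\sigma}$ with $\sigma=1-\beta$, and $\Delta u$ like $t^{\sigma}$ with $\sigma=1-\beta/2$. The argument below is the same for both, with the appropriate $\sigma$, so I describe it for a generic $w$ with $\|w'\|\le Ct^{\sigma-1}$ and $\|w''\|\le Ct^{\sigma-2}$.

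\textbf{Case $n=1$.} Write
\[
R^{1-\theta}=\theta\bigl(w(t_{1-\theta})-w(t_0)\bigr)-(1-\theta)\bigl(w(t_1)-w(t_{1-\theta})\bigr).
\]
Each difference is $\int\|w'\|\,ds$ over a subinterval of $[0,\tau_1]$. Bounding the integrand by $Cs^{\sigma-1}$ gives at most $C\tau_1^{\sigma}/\sigma$, which is the first term of the claimed bound.

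\textbf{Case $n\ge2$.} I use the linear-interpolation remainder: for $t\in[t_{n-1},t_n]$,
\[
w(t)-\Pi_1w(t)=\tfrac12 (t-t_{n-1})(t-t_n)\,w''(\xi)
\]
in integral (Peano kernel) form, which avoids the pointwise mean value theorem for vector-valued $w$. Since $t_{n-\theta}$ lies in the interval, this gives
\[
\|R^{n-\theta}\|\le C\tau_n^2\sup_{s\in(t_{n-1},t_n)}\|w''(s)\|\le C\tau_n^2\, t_{n-1}^{\sigma-2}.
\]
Here the supremum is controlled at $t_{n-1}$ because $\sigma-2<0$. This is the second term of the claimed bound.

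\textbf{Translating to a rate in $N$.} On the graded mesh, $\tau_1^{\sigma}=T^{\sigma}N^{-r\sigma}$. For $k\ge2$ we have $t_k\le 2^r t_{k-1}$ and $\tau_k\le CN^{-1}t_k^{1-1/r}$. Hence
\[
t_{k-1}^{\sigma-2}\tau_k^2\le C N^{-2}\, t_k^{\sigma-2/r}.
\]
If $r\sigma\ge2$, the exponent of $t_k$ is nonnegative and the bound is $CN^{-2}$. If $r\sigma<2$, substitute $t_k=T(k/N)^r$ to get $CN^{-2}(k/N)^{r\sigma-2}\le CN^{-r\sigma}$, which is worst at $k=2$. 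Together these give $CN^{-\min\{r\sigma,2\}}$, and inserting $\sigma=1-\beta$ and $\sigma=1-\beta/2$ yields the two displayed statements.

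\textbf{Main obstacle.} The only delicate point is checking that the hypotheses of the regularity lemmas are actually met. For $\Delta u$ in particular, Lemma \ref{thm-reg-delta} needs $a_0\in D((-\Delta)^{\gamma_1+\epsilon+1/2})$ and $a_1,f\in D((-\Delta)^{\gamma_2+\epsilon+1/2})$, and this holds under the assumptions of Proposition \ref{main-pro2}. The additive constant $1$ in that lemma's bound is harmless, because $t^{\sigma-m}\ge c>0$ on $(0,T]$.
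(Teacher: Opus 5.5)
Your proposal is correct and takes essentially the same route as the paper. The paper simply invokes \cite[Lemma 3.8 and Theorem 3.9]{LiaoH2021L2_1}, fed by the regularity estimates of Lemma \ref{thm-reg-delta-v} (for $\nabla v$, with $\sigma=1-\beta$) and Lemma \ref{thm-reg-delta} (for $\Delta u$, with $\sigma=1-\beta/2$); you have written that argument out explicitly, namely the $n=1$ bound via $\int_0^{\tau_1}\|w'\|$, the linear-interpolation remainder for $n\ge 2$, and the graded-mesh conversion to $N^{-\min\{r\sigma,2\}}$.
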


Next, the convergence analysis 
 of scheme (\ref{num-scheme-2}) will be given. Denote $\bar u^n:=u^n-U^n$ and $\bar z^n:=z^n-Z^n$. One has the error system of scheme (\ref{num-scheme-2}): 
\begin{equation}\label{error-system-2} 
\begin{cases}   (\bar\partial_t^\beta \bar z)^{n-\theta} - \Delta \bar u^{n-\theta} = \mathcal{R}_z^{n-\theta}+\Delta R_u^{n-\theta}, & 1\le n\le N, \\   
\bar z^{n-\theta}  = (\bar\partial_t^\beta \bar u)^{n-\theta} + \mathcal{R}_u^{n-\theta}+R_z^{n-\theta}, & 1\le n\le N, \\   
\bar u(x,0) = \bar z(x,0)=0, & x \in \Omega, \\   
\bar u(x,t) = \bar z(x,t)=0, & (x,t) \in \partial \Omega \times (0,T). \end{cases} 
\end{equation}

The error estimate of $\mathcal{R}_z^{n-\theta}$ and $\nabla R_z^{n-\theta}$ can be derived by Lemmas \ref{lem-r-1}, \ref{lem-r-v}  and \ref{lem-nabla-v}.

 The regularity assumptions are proposed for the $L1$ and $L2$-$1_\sigma$ methods as follows.
\begin{itemize}
    \item[\textbf{B1.}] $a_0\in D((-\Delta)^{\gamma_1+\epsilon})$ and $a_1,f\in D((-\Delta)^{\gamma_2+\epsilon})$,  $0<\epsilon \ll 1$.
    \item[\textbf{B2.}] $a_0\in D((-\Delta)^{\gamma_1+\epsilon+\frac12})$ and $a_1,f\in D((-\Delta)^{\gamma_2+\epsilon+\frac12})$, 
    \item[\textbf{B3.}] $a_0\in D((-\Delta)^{\max\{\gamma_1+\epsilon,1\}})$ and $a_1,f\in D((-\Delta)^{\max\{\gamma_2+\epsilon,\frac12\}})$, 
    \item[\textbf{B4.}] $a_0\in D((-\Delta)^{\max\{\gamma_1+\epsilon,1\}+\frac12})$  and $a_1,f\in D((-\Delta)^{\max\{\gamma_2+\epsilon,\frac12\}+\frac12})$,
\end{itemize}
where $\gamma_1\in(\frac{1}{\alpha}+\frac{1}{4},\frac32]$, $\gamma_2\in(\frac{1}{4},1]$, $0<\epsilon \ll 1$.
\begin{theorem}\label{thm-conv-1}
If the assumptions \textbf{B1} and \textbf{B2} hold for $L1$ and $L2$-$1_\sigma$ respectively, the numerical scheme (\ref{error-system}) are unconditional convergent with
\[
\|\bar v^n\|_{L^2}+\|\nabla \bar u^n\|_{L^2} \le
\begin{cases}
C(N^{-\min\{r(1-\beta),2-\beta\}}+N^{-\min\{r(1-\beta/2),2-\beta\}}), \quad \text{if } \theta=0;\\
C(N^{-\min\{r(1-\beta),2\}}+N^{-\min\{r(1-\beta/2),2\}}),\quad \text{if } \theta=\beta/2;
\end{cases}
\]
for $1 \leq n \leq N$.
\end{theorem}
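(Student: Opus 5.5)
We would follow the energy method used for the stability lemmas of Section \ref{sec-stability}, now at the discrete level on the error system (\ref{error-system}). First apply $\nabla$ to the second equation of (\ref{error-system}). Then take the $L^2$ inner product of the first equation with $\bar v^{n-\theta}$ and of the differentiated second equation with $\nabla\bar u^{n-\theta}$. The cross terms $-(\Delta\bar u^{n-\theta},\bar v^{n-\theta})$ and $(\nabla\bar v^{n-\theta},\nabla\bar u^{n-\theta})$ cancel after integration by parts, using the homogeneous boundary conditions. This leaves
\begin{align*}
((\bar\partial_t^\beta \bar v)^{n-\theta},\bar v^{n-\theta})+((\bar\partial_t^\beta \nabla\bar u)^{n-\theta},\nabla\bar u^{n-\theta})
=(\mathcal{R}_v^{n-\theta}+\Delta R_u^{n-\theta},\bar v^{n-\theta})-(\nabla\mathcal{R}_u^{n-\theta}+\nabla R_v^{n-\theta},\nabla\bar u^{n-\theta}).
\end{align*}
We bound the left side from below by Lemma \ref{FD-ineq}, applied to both components; this relies on A1 holding for both the $L1$ and Alikhanov kernels. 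By Cauchy--Schwarz the right side is at most $(\|\bar v^{n-\theta}\|+\|\nabla\bar u^{n-\theta}\|)g^n$, where
\[
g^n:=2\big(\|\mathcal{R}_v^{n-\theta}\|+\|\Delta R_u^{n-\theta}\|+\|\nabla\mathcal{R}_u^{n-\theta}\|+\|\nabla R_v^{n-\theta}\|\big).
\]

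Next, apply Lemma \ref{grownwall} with $\lambda_l\equiv0$, so that no step-size restriction arises and convergence is unconditional, and with $v^k=\|\bar v^k\|$, $w^k=\|\nabla\bar u^k\|$. The initial errors vanish, so
\[
\|\bar v^n\|+\|\nabla\bar u^n\|\le C\max_{1\le k\le n}\sum_{j=1}^kP_{k-j}^{(k)}g^j .
\]
What remains is to bound the weighted sum of the truncation errors in each case.

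For $\theta=0$ the interpolation terms $R_v$ and $\Delta R_u$ vanish. Under \textbf{B1}, Lemma \ref{lem-r-1} gives $\|\mathcal{R}_v^j\|\le Ct_j^{-\beta}N^{-q_1}$, and Lemma \ref{lem-r-2} gives $\|\nabla\mathcal{R}_u^j\|\le Ct_j^{-\beta}N^{-q_2}$. Since $t_j^{-\beta}=\Gamma(1-\beta)\omega_{1-\beta}(t_j)$, the property $\sum_jP_{n-j}^{(n)}\omega_{1-\beta}(t_j)\le C$ from Lemma \ref{P} turns both into $N^{-q_1}+N^{-q_2}$, which is the first bound. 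Lemma \ref{lem-r-1} is stated under the $L^2$-type hypothesis of Lemma \ref{thm-reg-L2-v}, which \textbf{B1} implies, so it applies.

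For $\theta=\beta/2$ we work under \textbf{B2}. Lemma \ref{lem-r-v} controls $\sum_jP^{(n)}_{n-j}(\|\mathcal{R}_v^{j-\theta}\|+\|\nabla\mathcal{R}_u^{j-\theta}\|)$ directly by $N^{-\min\{r(1-\beta),2\}}+N^{-\min\{r(1-\beta/2),2\}}$. The terms $\nabla R_v$ and $\Delta R_u$ are bounded pointwise by Lemma \ref{lem-nabla-v}. To turn these pointwise bounds into bounds on the weighted sums, use Lemma \ref{P}: $\sum_{j}P_{n-j}^{(n)}\le \sum_j P_{n-j}^{(n)}A_{j-1}^{(j)}/A_{n-1}^{(n)}\le C t_n^{\beta}\le C$, because $A^{(j)}_{j-1}\ge A^{(n)}_{n-1}$-type monotonicity together with $A_{n-1}^{(n)}\ge C\,\omega_{1-\beta}(t_n)$ via A2.

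The main obstacle is making sure the regularity hypotheses of Lemmas \ref{lem-r-v} and \ref{lem-nabla-v} actually hold under \textbf{B2}. This means deriving bounds such as $\|v'''\|\le C(1+t^{1-\beta-3})$, $\|\nabla v''\|\le C t^{1-\beta-2}$ and $\|\Delta u''\|\le Ct^{1-\beta/2-2}$ from Lemmas \ref{thm-reg-delta}, \ref{thm-reg-L2-v} and \ref{thm-reg-delta-v}. The exponent $1-\alpha/4=1-\beta/2$ of Lemma \ref{thm-reg-delta} matches exactly. For $\nabla v$, Lemma \ref{thm-reg-delta-v} requires $a_0$ slightly smoother than \textbf{B2} might literally give. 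We would check that $\gamma_1+\epsilon+\frac12>\frac1\alpha+\frac34\ge\frac1\alpha+\frac12$, so that the hypotheses are covered, and otherwise appeal to \textbf{B4}.
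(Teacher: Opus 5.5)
Your proposal is correct and follows the paper's proof. It uses the same energy identity, obtained by pairing the first equation with $\bar v^{n-\theta}$ and the gradient of the second with $\nabla\bar u^{n-\theta}$, then Lemma \ref{FD-ineq}, and then Lemmas \ref{grownwall} and \ref{P}; you additionally supply the truncation-error bookkeeping and the regularity checks under \textbf{B1}/\textbf{B2} that the paper leaves implicit. One small tidy-up: $\sum_j P_{n-j}^{(n)}\le C$ follows most directly from $\sum_j P_{n-j}^{(n)}\omega_{1-\beta}(t_j)\le C$ in Lemma \ref{P} together with $\omega_{1-\beta}(t_j)\ge\omega_{1-\beta}(T)$, rather than from a cross-$n$ comparison $A_{j-1}^{(j)}\ge A_{n-1}^{(n)}$, which A1 does not literally provide.
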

\begin{proof}
Acting $\nabla$ on the second equation of (\ref{error-system}).
Taking the inner product with $\bar v^n$ and $-\nabla \bar u^n$ for the first two equations, respectively. It gives that
\begin{align*}
&((\bar\partial_t^\beta \bar v)^{n-\theta},\bar v^{n-\theta}) - (\Delta \bar u^{n-\theta},\bar v^{n-\theta}) = (\mathcal{R}_v^{n-\theta}+\Delta R_u^{n-\theta},\bar v^{n-\theta}), \\
&-(\nabla\bar v^{n-\theta},\nabla \bar u^{n-\theta}) 
= -((\bar\partial_t^\beta \nabla\bar u)^{n-\theta},\nabla \bar u^{n-\theta})-(\nabla\mathcal{R}_u^{n-\theta}+\nabla R_v^{n-\theta},\nabla \bar u^{n-\theta}).
\end{align*}
Adding above equations, it comes that
\begin{align*}
((\bar\partial_t^\beta \bar v)^{n-\theta},\bar v^{n-\theta}) + ((\bar\partial_t^\beta \nabla \bar u)^{n-\theta},\nabla \bar u^{n-\theta}) = (\mathcal{R}_v^{n-\theta}+\Delta R_u^{n-\theta},\bar v^{n-\theta})-(\nabla \mathcal{R}_u^{n-\theta}+\nabla R_v^{n-\theta},\nabla \bar u^{n-\theta}).
\end{align*}
By Lemma \ref{FD-ineq}, one has
\begin{align*}
&\frac12\bar\partial_t^\beta \big(\|\bar v^n\|_{L^2}^2 + \|\nabla \bar u^n\|_{L^2}^2\big) \\
\le& \big(\|\mathcal{R}_v^{n-\theta}\|_{L^2}+\|\Delta R_u^{n-\theta}\|_{L^2}\big)\|\bar v^{n-\theta}\|_{L^2}+\big(\|\nabla \mathcal{R}_u^{n-\theta}\|_{L^2}+\|\nabla R_v^{n-\theta}\|_{L^2}\big)\|\nabla \bar u^{n-\theta}\|_{L^2}\\
\le&\big(\|\mathcal{R}_v^{n-\theta}\|_{L^2}+\|\Delta R_u^{n-\theta}\|_{L^2}+\|\nabla \mathcal{R}_u^{n-\theta}\|_{L^2}+\|\nabla R_v^{n-\theta}\|_{L^2}\big)\big(\|\bar v^{n-\theta}\|_{L^2}+\|\nabla \bar u^{n-\theta}\|_{L^2}\big).
\end{align*}
The desired result follows from Lemmas \ref{grownwall} and \ref{P}.
\end{proof}

For (\ref{error-system-2}), the convergence estimate is obtained by Theorem \ref{thm-conv-1} similarly.
\begin{theorem}\label{thm-conv-2}
If the assumptions \textbf{B3} and \textbf{B4} hold for $L1$ and $L2$-$1_\sigma$ respectively, the numerical scheme (\ref{error-system-2}) are unconditional convergent with 
\[ \|\bar z^n\|_{L^2}+\|\nabla \bar u^n\|_{L^2} \le 
\begin{cases} C(N^{-\min\{r\beta,2-\beta\}}+N^{-\min\{r(1-\beta/2),2-\beta\}}), \quad \text{if } \theta=0;\\ 
C(N^{-\min\{r\beta,2\}}+N^{-\min\{r(1-\beta/2),2\}}),\quad \text{if } \theta=\beta/2; 
\end{cases} \] for $1 \leq n \leq N$. 
\end{theorem}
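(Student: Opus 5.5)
The plan is to mirror the proof of Theorem \ref{thm-conv-1}, working with the error system (\ref{error-system-2}). I apply $\nabla$ to the second equation of (\ref{error-system-2}). I then take the $L^2$ inner product of the first equation with $\bar z^{n-\theta}$ and of the gradient of the second equation with $-\nabla\bar u^{n-\theta}$. On adding the two, the cross terms $-(\Delta\bar u^{n-\theta},\bar z^{n-\theta})$ and $-(\nabla\bar z^{n-\theta},\nabla\bar u^{n-\theta})$ cancel by integration by parts, using the homogeneous Dirichlet data. This leaves
\[
((\bar\partial_t^\beta \bar z)^{n-\theta},\bar z^{n-\theta})+((\bar\partial_t^\beta \nabla\bar u)^{n-\theta},\nabla\bar u^{n-\theta})
=(\mathcal{R}_z^{n-\theta}+\Delta R_u^{n-\theta},\bar z^{n-\theta})-(\nabla\mathcal{R}_u^{n-\theta}+\nabla R_z^{n-\theta},\nabla\bar u^{n-\theta}).
\]
Lemma \ref{FD-ineq} bounds the left side below by $\frac12\sum_k A^{(n)}_{n-k}\nabla_\tau(\|\bar z^k\|^2+\|\nabla\bar u^k\|^2)$. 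The Cauchy--Schwarz inequality gives the form required by Lemma \ref{grownwall} with $\lambda_l=0$ and $g^n$ equal to twice the sum of the four truncation norms. Lemma \ref{grownwall} then yields $\|\bar z^n\|+\|\nabla\bar u^n\|\le C\max_k\sum_j P^{(k)}_{k-j}g^j$, since the initial errors vanish.

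It remains to bound $\sum_j P_{k-j}^{(k)}$ times each truncation term. The terms $\nabla\mathcal{R}_u$ and $\Delta R_u$ are identical to those in Theorem \ref{thm-conv-1}. Under \textbf{B3} (resp.\ \textbf{B4}), Lemma \ref{thm-reg-delta-2} (resp.\ Lemma \ref{thm-reg-delta}) gives $\|\partial_t^m\nabla u\|\le C(1+t^{1-\beta/2-m})$. Lemma \ref{lem-r-2} combined with Lemma \ref{P} (using $\sum_j P^{(n)}_{n-j}t_j^{-\beta}\le C$) then gives $N^{-\min\{r(1-\beta/2),2-\beta\}}$ for $L1$. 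For $L2$-$1_\sigma$, the second parts of Lemmas \ref{lem-r-v} and \ref{lem-nabla-v} give $N^{-\min\{r(1-\beta/2),2\}}$. In the latter case the pointwise bound on $\|\Delta R_u\|$ is summed using $\sum_jP^{(n)}_{n-j}\le Ct_n^\beta$.

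For the $z$-terms, the regularity exponent changes from $1-\beta$ to $\beta$. Under \textbf{B3}, Lemma \ref{thm-reg-L2-z} gives $\|\partial_t^m z\|\le Ct^{\beta-m}$; under \textbf{B4}, Lemma \ref{thm-reg-delta-z} gives $\|\partial_t^m\nabla z\|\le Ct^{\beta-m}$. The \textbf{B3}/\textbf{B4} hypotheses are exactly the maxima needed to invoke these lemmas alongside the $u$-regularity. I then rerun the proof of Lemma \ref{lem-r-1} with exponent $\sigma=\beta$ in place of $1-\beta$. On the first interval the contribution becomes $t_n^{-\beta}\tau_1^{\beta}$. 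For the interior intervals, $\Phi_k=\tau_{k+1}^{2-\beta}t_{k+1}^{\beta}\sup s^{\beta-2}\sim N^{-\min\{r\beta,2-\beta\}}$. Hence $\|\mathcal R_z^n\|\le Ct_n^{-\beta}N^{-\min\{r\beta,2-\beta\}}$, and Lemma \ref{P} removes the factor $t_n^{-\beta}$. For the Alikhanov case, Lemmas \ref{lem-r-v} and \ref{lem-nabla-v} with regularity index $\beta$ replacing $1-\beta$ give $N^{-\min\{r\beta,2\}}$ for both $\mathcal R_z$ and $\nabla R_z$. For $L1$ ($\theta=0$), $R_z$ vanishes.

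The main obstacle is making sure the cited consistency lemmas, which are stated for exponent $1-\beta$, transfer to exponent $\beta$. In particular, the first-interval bound $\tau_1^{\sigma}/\sigma$ and the term $t_1^{\sigma-3}\tau_2^3$ must still scale as $N^{-r\beta}$. A further concern is that when $\beta>1/2$ the function $z$ is smoother, so I must confirm that no estimate used $\sigma<1/2$. I would check this by redoing the $n=1$ and $k=0$ computations in Lemma \ref{lem-r-1} explicitly with $\|z'(s)\|\le Cs^{\beta-1}$, which integrates to $\tau_1^\beta$.
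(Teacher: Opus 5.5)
Your proposal is correct and follows the paper's route. The paper only says that Theorem \ref{thm-conv-2} follows ``similarly'' to Theorem \ref{thm-conv-1}. You supply exactly that argument: test with $\bar z^{n-\theta}$ and $-\nabla\bar u^{n-\theta}$, then apply Lemmas \ref{FD-ineq}, \ref{grownwall} and \ref{P}, and get the $z$-consistency bounds by rerunning Lemmas \ref{lem-r-1}, \ref{lem-r-v} and \ref{lem-nabla-v} with regularity index $\beta$ in place of $1-\beta$. The transfer to index $\beta$ does go through: the cited Liao--McLean--Zhang bounds hold for any $\sigma\in(0,1)$, and your own first-interval computation ($\|\mathcal R_z^1\|_{L^2}\le C= Ct_1^{-\beta}\tau_1^{\beta}$, with $\tau_1^\beta\sim t_1^{\beta-3}\tau_2^3\sim N^{-r\beta}$) confirms the $N^{-\min\{r\beta,\cdot\}}$ scaling.
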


\subsection{$L^2(\Omega)$ norm estimate for the case of discontinue $a_1(x)$}
In the previous context, we give the $H^1(\Omega)$ norm error estimate, see Theorems \ref{thm-conv-1} and \ref{thm-conv-2}. The generalized results of the $L^2(\Omega)$ norm are considered in this section.

\begin{lemma}\label{lem-L2-v}
For the stability of (\ref{eq-gov-trans}), one has    
$$\|v\|_{H^{-1}(\Omega)} + \|u\|_{L^{2}(\Omega)}\leq C(\|a_0\|_{L^{2}(\Omega)}+\|a_1\|_{H^{-1}(\Omega)}+\|f\|_{H^{-1}(\Omega)}),$$
where $C$ is a constant depends on $t$ and $\alpha$.
\end{lemma}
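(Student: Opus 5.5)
We prove this by rerunning the energy argument used for the $H^1$-stability of (\ref{eq-gov-trans}), shifted down one level in the scale $\mathcal{D}((-\Delta)^{\gamma})$. The natural pairing is $(-\Delta)^{-1}v$ with the first equation and $u$ with the second, rather than $v$ and $\Delta u$.

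We take the $L^2$ inner product of the first equation of (\ref{eq-gov-trans}) with $(-\Delta)^{-1}v$. Then
$((\partial_t^\beta v,(-\Delta)^{-1}v)) = (\partial_t^\beta (-\Delta)^{-\frac12}v,(-\Delta)^{-\frac12}v)$, and the elliptic term gives $(-\Delta u,(-\Delta)^{-1}v)=(u,v)$. The right-hand side becomes
\[
\omega_{2-\alpha}(t)\big((-\Delta)^{-\frac12}a_1+\Gamma(2-\alpha)(-\Delta)^{-\frac12}f,(-\Delta)^{-\frac12}v\big).
\]
Next we take the inner product of the second equation $v=\partial_t^\beta u$ with $u$, which gives $(v,u)=(\partial_t^\beta u,u)$.

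Subtracting the two identities cancels the cross term $(u,v)$. This leaves
\[
(\partial_t^\beta w,w)+(\partial_t^\beta u,u)=\omega_{2-\alpha}(t)\big((-\Delta)^{-\frac12}(a_1+\Gamma(2-\alpha)f),w\big),
\]
with $w:=(-\Delta)^{-\frac12}v$. The sign bookkeeping here is the only point needing care.

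The right-hand side is bounded by $\omega_{2-\alpha}(t)\big(\|a_1\|_{H^{-1}}+\Gamma(2-\alpha)\|f\|_{H^{-1}}\big)\big(\|w\|+\|u\|\big)$, using the norm equivalence $\|\cdot\|_{\mathcal{D}((-\Delta)^{-1/2})}\sim\|\cdot\|_{H^{-1}}$. It is cleanest to work mode by mode in the eigenbasis $\{\varphi_k\}$, so that Lemma \ref{Coercivity} applies to scalar $C^1$ functions, and then sum.

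We then convolve with $\omega_\beta(t-s)$ and apply Lemma \ref{Coercivity}. The initial data are $v(0)=0$ and $u(0)=a_0$, so
\[
\|w(t)\|^2+\|u(t)\|^2\le \|a_0\|^2+2\big(\|a_1\|_{H^{-1}}+\Gamma(2-\alpha)\|f\|_{H^{-1}}\big)\,\mathcal{I}^\beta\big[\omega_{2-\alpha}(\|w\|+\|u\|)\big](t).
\]

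We set $M(t)=\sup_{s\le t}(\|w(s)\|+\|u(s)\|)$. The weight integrates as $\mathcal{I}^\beta\omega_{2-\alpha}=\omega_{2-\alpha/2}$, so the inequality yields $M^2\le 2\|a_0\|^2+C\,\omega_{2-\alpha/2}(t)\big(\|a_1\|_{H^{-1}}+\|f\|_{H^{-1}}\big)M$. Solving this quadratic inequality gives the claimed bound. We finish by noting $\|w\|=\|v\|_{\mathcal{D}((-\Delta)^{-1/2})}\sim\|v\|_{H^{-1}}$.

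The main obstacle is justifying the coercivity step with weak data. When $a_1\in L^2$ only, $v$ need not be $C^1$ in time with values in $L^2$. We handle this by applying Lemma \ref{Coercivity} to the scalar Fourier coefficients, which are explicit Mittag-Leffler expressions that are smooth for $t>0$. Alternatively, we prove the estimate for smooth data and pass to the limit by density.
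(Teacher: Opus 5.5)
Your proposal is correct and follows the paper's proof. Pairing the first equation with $(-\Delta)^{-1}v$ and the second with $u$ is the same as the paper's step of applying $(-\Delta)^{-\frac12}$ and testing with $(-\Delta)^{-\frac12}v$ and $-(-\Delta)^{\frac12}u$, after which both arguments use Lemma \ref{Coercivity} and $\mathcal{I}^{\beta}\omega_{2-\alpha}=\omega_{2-\alpha/2}$; your $M(t)=\sup_{s\le t}(\|w(s)\|+\|u(s)\|)$ device is a more careful version of the paper's closing ``triangle inequality'' step.

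One small correction to your last paragraph. The modes $v_k(t)$ behave like $t^{1-\beta}$ near $t=0$ whenever $(a_1+\Gamma(2-\alpha)f,\varphi_k)\neq0$, even for smooth data, so they are not in $C^1[0,T]$ and a density argument does not remove the issue. What justifies the step is that the coercivity inequality still holds for absolutely continuous $h$ (Alikhanov's inequality), and these modes are absolutely continuous.
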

\begin{proof}
Acting $(-\Delta)^{-\frac12}$ on the first two equations of (\ref{eq-gov-trans}).
Taking the inner product $(\cdot,\cdot)_{L^2(\Omega)}$ with $(-\Delta)^{-\frac12}v$ and $-(-\Delta)^{\frac12} u$ for the first two equations, respectively. It gives that
\begin{align*}
(\partial_t^\beta (-\Delta)^{-\frac12}v,(-\Delta)^{-\frac12}v) + ((-\Delta)^{\frac12} u,(-\Delta)^{-\frac12}v) &= \omega_{2-\alpha}(t)((-\Delta)^{-\frac12}a_1,(-\Delta)^{-\frac12}v)\\&~~~~+\Gamma{(2-\alpha)}\omega_{2-\alpha}(t)((-\Delta)^{-\frac12}f,(-\Delta)^{-\frac12}v), \\
-((-\Delta)^{-\frac12}v,(-\Delta)^{\frac12} u)  &= -(\partial_t^\beta (-\Delta)^{-\frac12}u,(-\Delta)^{\frac12} u). 
\end{align*}
Adding above equations, from (\ref{frac-power-define}), it comes that
\begin{align*}
(\partial_t^\beta v,v)_{H^{-1}(\Omega)} + (\partial_t^\beta u, u)_{L^{2}(\Omega)}
&= \omega_{2-\alpha}(t)(a_1,v)_{H^{-1}(\Omega)}+\Gamma{(2-\alpha)}\omega_{2-\alpha}(t)(f,v)_{H^{-1}(\Omega)}\\
&\le \omega_{2-\alpha}(t)(\|a_1\|_{H^{-1}(\Omega)}+\Gamma{(2-\alpha)}\|f\|_{H^{-1}(\Omega)})(\|v\|_{H^{-1}(\Omega)}+\| u\|_{L^{2}(\Omega)}).
\end{align*}
By Lemma \ref{Coercivity}, one has
\begin{align*}
&\|v\|_{H^{-1}(\Omega)}^2 + \| u\|_{L^{2}(\Omega)}^2\\
\le& \|a_0\|_{L^{2}(\Omega)}^2+2(\|a_1\|_{H^{-1}(\Omega)}+\Gamma{(2-\alpha)}\|f\|_{H^{-1}(\Omega)})\mathcal{I}^{\beta}\omega_{2-\alpha}(t)(\|v\|_{H^{-1}(\Omega)}+\|u\|_{L^{2}(\Omega)}),
\end{align*}
where $\mathcal{I}^{\beta}\omega_{2-\alpha}(t)=\omega_{2-\alpha/2}(t)$.
The desired result follows by the triangle inequality.
\end{proof}

\begin{lemma}\label{lem-L2-z}
For the stability of (\ref{eq-gov-trans-novel}), one has    
$$\|z\|_{H^{-1}(\Omega)} + \|u\|_{L^2(\Omega)}\leq C(\|a_0\|_{L^2(\Omega)}+\|a_1\|_{L^2(\Omega)}+\|f\|_{L^2(\Omega)}),$$
where $C$ is a constant depends on $t$ and $\alpha$.
\end{lemma}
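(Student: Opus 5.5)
We follow the argument of Lemma \ref{lem-L2-v}, now applied to system (\ref{eq-gov-trans-novel}). The difference is that in (\ref{eq-gov-trans-novel}) the data enter through the second equation instead of as a source in the first. We apply $(-\Delta)^{-\frac12}$ to the first two equations of (\ref{eq-gov-trans-novel}). We then pair the first with $(-\Delta)^{-\frac12}z$ and the second with $-(-\Delta)^{\frac12}u$, both in $(\cdot,\cdot)_{L^2(\Omega)}$. This gives
\begin{align*}
&(\partial_t^\beta (-\Delta)^{-\frac12}z,(-\Delta)^{-\frac12}z)+((-\Delta)^{\frac12}u,(-\Delta)^{-\frac12}z)=0,\\
&-((-\Delta)^{-\frac12}z,(-\Delta)^{\frac12}u)=-(\partial_t^\beta (-\Delta)^{-\frac12}u,(-\Delta)^{\frac12}u)+\omega_{2-\alpha/2}(t)\big((-\Delta)^{-\frac12}(a_1+\Gamma(2-\alpha)f),(-\Delta)^{\frac12}u\big).
\end{align*}
When the two identities are added, the cross terms cancel by the symmetry of $(-\Delta)^{\pm\frac12}$. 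By (\ref{frac-power-define}) we then have $((-\Delta)^{-\frac12}\psi,(-\Delta)^{\frac12}u)=(\psi,u)$. The sum therefore becomes
\[
(\partial_t^\beta z,z)_{H^{-1}(\Omega)}+(\partial_t^\beta u,u)_{L^2(\Omega)}=\omega_{2-\alpha/2}(t)\big(a_1+\Gamma(2-\alpha)f,u\big)_{L^2(\Omega)}.
\]
We bound the right-hand side by Cauchy--Schwarz:
\[
\omega_{2-\alpha/2}(t)\big(\|a_1\|_{L^2}+\Gamma(2-\alpha)\|f\|_{L^2}\big)\big(\|z\|_{H^{-1}}+\|u\|_{L^2}\big).
\]
This is the step that explains why only $L^2$ norms of $a_1,f$ appear, whereas Lemma \ref{lem-L2-v} needed $H^{-1}$ norms paired against $v$.

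Next we apply the $\mathcal{I}^\beta$ operator, i.e.\ the kernel $\omega_\beta(t-s)$, to both sides. The coercivity inequality of Lemma \ref{Coercivity} is applied coefficientwise in the eigenbasis $\{\varphi_k\}$ and summed, for both $z$ and $u$. Using $z(0)=0$ and $u(0)=a_0$, this yields
\[
\|z\|_{H^{-1}}^2+\|u\|_{L^2}^2\le \|a_0\|_{L^2}^2+2\big(\|a_1\|_{L^2}+\Gamma(2-\alpha)\|f\|_{L^2}\big)\,\mathcal{I}^\beta\big[\omega_{2-\alpha/2}(\|z\|_{H^{-1}}+\|u\|_{L^2})\big](t).
\]
We then set $M(t)=\sup_{s\le t}(\|z(s)\|_{H^{-1}}+\|u(s)\|_{L^2})$ and use $\mathcal{I}^\beta\omega_{2-\alpha/2}=\omega_2(t)=t$. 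This gives
\[
\tfrac12 M^2\le \|a_0\|^2+2tM\big(\|a_1\|+\Gamma(2-\alpha)\|f\|\big),
\]
since the left-hand side is monotone in $t$ after taking the sup. Young's inequality then gives the claim with $C$ depending on $t$ and $\alpha$.

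The main obstacle is justifying the coercivity step. Lemma \ref{Coercivity} requires $C^1[0,T]$ functions, but by Lemma \ref{thm-reg-L2-z} $z$ only behaves like $t^{\beta}$ near $0$, and $u$ has an analogous singular derivative. We would handle this in the way the paper implicitly does in the earlier stability lemmas: apply the inequality to each Fourier mode, or to a regularized/truncated series, and pass to the limit by density using the explicit representations (\ref{eq-u-wave-01}) and (\ref{solution-z}). In this way all terms remain integrable, since $\omega_{2-\alpha/2}$ is integrable against $\omega_\beta$.
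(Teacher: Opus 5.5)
Your proposal is correct and follows the paper's argument: applying $(-\Delta)^{-\frac12}$ and testing with $(-\Delta)^{-\frac12}z$ and $-(-\Delta)^{\frac12}u$ is the same as the paper's testing of the original equations with $(-\Delta)^{-1}z$ and $-u$. It is then followed by the same cancellation, Cauchy--Schwarz bound, coercivity step and identity $\mathcal{I}^{\beta}\omega_{2-\alpha/2}=\omega_2$. Your running-supremum $M(t)$ makes rigorous the paper's informal step of pulling $\|z\|_{H^{-1}(\Omega)}+\|u\|_{L^2(\Omega)}$ out of $\mathcal{I}^{\beta}$, and you correctly write $\|a_0\|_{L^2(\Omega)}^2$ where the paper has $\|a_0\|_{L^2(\Omega)}$.
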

\begin{proof}
Taking the inner product $(\cdot,\cdot)_{L^2(\Omega)}$ with $(-\Delta)^{-1}z$ and $-u$ for the first two equations, respectively. It gives that
\begin{align*}
(\partial_t^\beta z,(-\Delta)^{-1}z) + ((-\Delta) u,(-\Delta)^{-1}z) &= 0, \\
-( z, u)  &= -(\partial_t^\beta  u, u)+\omega_{2-\alpha/2}(t)( a_1+\Gamma{(2-\alpha)} f, u). 
\end{align*}
Adding above equations, from (\ref{frac-power-define}), it comes that
\begin{align*}
(\partial_t^\beta z,z)_{H^{-1}(\Omega)} + (\partial_t^\beta  u, u)_{L^2(\Omega)}
&= \omega_{2-\alpha/2}(t)( a_1+\Gamma{(2-\alpha)} f, u)\\
&\le \omega_{2-\alpha/2}(t)(\| a_1\|_{L^2(\Omega)}+\Gamma{(2-\alpha)}\| f\|_{L^2(\Omega)})(\|z\|_{H^{-1}(\Omega)}+\| u\|_{L^2(\Omega)}).
\end{align*}
By Lemma \ref{Coercivity}, one has
\begin{align*}
&\|z\|_{H^{-1}(\Omega)}^2 + \|u\|_{L^2(\Omega)}^2\\
\le& \|a_0\|_{L^2(\Omega)}+2(\|a_1\|_{L^2(\Omega)}+\Gamma{(2-\alpha)}\|f\|_{L^2(\Omega)})\mathcal{I}^{\beta}\omega_{2-\alpha/2}(t)(\|z\|_{H^{-1}(\Omega)}+\|u\|_{L^2(\Omega)}),
\end{align*}
where $\mathcal{I}^{\beta}\omega_{2-\alpha/2}(t)=\omega_{2}(t)$.
The desired result follows by the triangle inequality.
\end{proof}

\begin{remark}
Comparing the results in Section \ref{sec-stability}, $L^2(\Omega)$ norm stability is derived based on the more general assumption for $a_1(x)\in H^{-1}(\Omega)$ or $L^2(\Omega)$.   
\end{remark}

\begin{remark} If $a_0(x)\in H^2(\Omega)\cap H_0^1(\Omega)$, $a_1(x)\in L^2(\Omega)$ and $t^{\alpha-1}g(x,t)\in L^2(\Omega)$ for $t\in(0,T]$, then the error systems (\ref{error-system}) and (\ref{error-system-2}) (here $\theta=0$) are unconditional convergent with 
\[  \begin{cases} 
 \|\bar v^n\|_{H^{-1}(\Omega)}+\| \bar u^n\|_{L^2(\Omega)} \le C(N^{-\min\{r(1-\beta),2-\beta\}}+N^{-\min\{r(1-\beta/2),2-\beta\}}), \quad \text{for (\ref{error-system})};\\  
 \|\bar z^n\|_{H^{-1}(\Omega)}+\| \bar u^n\|_{L^2(\Omega)} \le C(N^{-\min\{r\beta,2-\beta\}}+N^{-\min\{r(1-\beta/2),2-\beta\}}),\quad \text{for (\ref{error-system-2})};  
 \end{cases} \] 
 where $1 \leq n \leq N$.
 In order to obtain the desired results, we act $(-\Delta)^{-\frac12}$ the first two equations on the error equation (\ref{error-system}), and then take the inner product $(\cdot,\cdot)_{L^2(\Omega)}$ with $(-\Delta)^{-\frac12}\bar v^n$ and $-(-\Delta)^{\frac12} \bar u^n$ for the first two equations, respectively. For error system (\ref{error-system-2}) , we take the inner product $(\cdot,\cdot)_{L^2(\Omega)}$ with $(-\Delta)^{-1}\bar z^n$ and $-\bar u^n$ for the first two equations, respectively.
A discrete fractional Gr\"{o}nwall inequality proposed in Lemma \ref{grownwall} is a crucial tool
in the numerical analysis of the given problems.   
\end{remark}

\section{Numerical experiment}\label{sec-num}
In this section, we carry out some numerical experiments using finite element method to check the theoretical results. Let us revisit the foundational work in which the SFOR method was originally proposed \cite{LyuP2022SFOR}. As established in Remark 2.2 of that paper, auxiliary variables were  precisely introduced to avoid explicitly representing the singular temporal kernel $a_1(x)\omega_{2-\alpha}(t)$. Based on this, they adopted the following numerical framework with non-singular source $g(x,t)$:
\begin{equation}\label{eq-gov-ref}
\begin{cases}
  \partial_t^\beta \textbf{v}  - \Delta \textbf{u} = t\Delta a_1(x)+g(x,t), & (x,t) \in \Omega \times (0,T), \\
  \textbf{v}  = \partial_t^\beta \textbf{u}, & (x,t) \in \Omega \times (0,T), \\
  \textbf{u}(x,0) = \textbf{v}(x,0)=0, & x \in \Omega, \\
  \textbf{u}(x,t) = \textbf{v}(x,t)=0, & (x,t) \in \partial \Omega \times (0,T),
\end{cases}
\end{equation}
where $u=\textbf{u}+ta_1(x)$. The limitation of (\ref{eq-gov-ref}) is $(\Delta a_1(x),\phi(x))$, $x\in\Omega$ must exist, where $\phi(x)$ is a basis function from a finite element space.
Our numerical frameworks (\ref{eq-gov-trans}) and (\ref{eq-gov-trans-novel}) can relax this requirement. First, we find that the optimal convergence of $L1$ scheme is reached, despite the presence of $a_1(x)\omega_{2-\alpha}(t)$, when the mesh parameter of graded meshes is $r=\frac{4-\alpha}{2-\alpha}$ for $1<\alpha<2-\epsilon$, where $\epsilon$ is a fixed positive constant.

Theoretically, the scheme becomes inapplicable as $\alpha\rightarrow2^-$ because the mesh parameter $r=\frac{4-\alpha}{2-\alpha}$ diverges to $\infty$, rendering the underlying functional framework ill-defined. When $\Delta a_1\in L^2(\Omega)$, the reference scheme (\ref{eq-gov-ref}) remains well-posed and effective. For more general $a_1(x)$, the transformed framework (\ref{eq-gov-trans}) is recommended, as it imposes a bounded $r$ and thereby ensures stability and applicability across a broader class of problems. Furthermore, we notice that $a_1(x)\omega_{2-\alpha}(t)$ and singular source $g(x,t)$ $\rightarrow$ $\infty$ as $t\rightarrow0^+$. Therefore, a new form (\ref{eq-gov-trans-novel}) is considered, which is well-posed and validate when $\nabla a_1(x)\in L^2(\Omega)$, see Section \ref{sec-stability}. Collectively, these three different numerical frameworks provide a systematic and theoretically grounded protocol for deploying the SFOR method under varying regularity assumptions.

The $L1$ and $L2$-$1_\sigma$ methods are employed to simulate the problems (\ref{eq-gov-trans}) and (\ref{eq-gov-trans-novel}). To rigorously validate the theoretical properties of the proposed schemes, we conduct two carefully designed numerical experiments. We examine the temporal convergence behavior of both methods under varying fractional order $\alpha \in (1,2)$ and mesh grading parameter $r\geq 1$, with results summarized in Tables \ref{table-test1}-\ref{table7}. First, Tables \ref{table-test1} and \ref{table-test2} confirm that the $L1$ scheme applied to system (\ref{eq-gov-trans}) achieves the predicted convergence rate of $2-\beta$ when $r = \frac{4-\alpha}{2-\alpha}$—a value derived from our error analysis to compensate for solution singularity near $t=0$. In contrast, under uniform temporal discretization ($r=1$), the observed convergence order drops below one, corroborating the necessity of graded meshes for optimal accuracy. Second, we apply both the $L1$ and $L2$-$1_\sigma$ methods to system (\ref{eq-gov-trans-novel}) using their respective theoretically optimal mesh parameters with different parameter $\alpha$; the resulting errors, reported in Tables \ref{table-test3} and \ref{table-test4}, align precisely with the sharp convergence rates established in our error bounds. Finally, a second numerical experiment, presented in Tables \ref{table5}-\ref{table7}, further validates the robustness of our theoretical convergence results under varying fractional orders $\alpha$, mesh grading exponents $r$ and solution regularity assumptions.

\begin{example}\label{ex-test}
Problems (\ref{eq-gov}) with $\Omega=(0,\pi)$, $T=1$, 
$a_0(x)=a_1(x)=x$ for $x\in(0,\pi/2],$ $a_0(x)=a_1(x)=\pi-x$ for $x\in(\pi/2,\pi)$, and $f(x)=\sin(x)$.
The size of the space grids $h=\frac{\pi}{100}$, $N$ is the number of partitions in the time
grids. $e_{H^1}=\max_{1\leq n\leq N}\|u_h^n-U_h^n\|_{H^1}$, where $u_h^n$ and $U_h^n$ are the reference solution ($h=\frac{\pi} {100}$ and $N=2560$, $128$ for $L1$, $L2$-$1_\sigma$, respectively) and the numerical solution, respectively. Furthermore, to test the convergence rate, let $Order=\log_2(e_{H^1}(N/2)/e_{H^1}(N))$.
\end{example}

\begin{table}[!ht]
\caption{$L1$ scheme for Example \ref{ex-test} applied to system (\ref{eq-gov-trans}) with $\alpha=1.25$.}\label{table-test1}
\renewcommand{\arraystretch}{1}
\def\temptablewidth{0.95\textwidth}
\begin{center}
 \begin{tabular*}{\temptablewidth}{@{\extracolsep{\fill}}lcccccc}\hline
 $N$ &\multicolumn{2}{c}{$r=1$} &\multicolumn{2}{c}{$r_{opt}=\frac{4-\alpha}{2-\alpha}$} &\multicolumn{2}{c}{$r=\frac{4-\alpha}{\alpha}$}    \\
 \cline{2-3}\cline{4-5}\cline{6-7}
 & $e_{H^1}(N)$         &Order     &$e_{H^1}(N)$    &Order    &$e_{H^1}(N)$ &Order     \\ \hline
 20 &1.2199e-01     &-       &4.4765e-02   &-        &4.3459e-02  &-  \\
 40 &9.3580e-02     &0.3825  &1.7882e-02   &1.3238   &2.7938e-02  &0.6374    \\
 80 &7.4264e-02     &0.3335  &6.9907e-03   &1.3550   &1.4523e-02  &0.9439 \\
 160 &5.9328e-02    &0.3240  &2.6843e-03   &1.3809   &6.5413e-03  &1.1507    \\
 320 &4.7945e-02    &0.3073  &1.0031e-03   &1.4202   &2.6884e-03  &1.2828    \\\hline
 Optimal Order  &\multicolumn{6}{c}{1.375} \\ \hline
\end{tabular*}
\end{center}
\end{table}

\begin{table}[!ht]
\caption{$L1$ scheme for Example \ref{ex-test} applied to system (\ref{eq-gov-trans}) with $\alpha=1.75$.}\label{table-test2}
\renewcommand{\arraystretch}{1}
\def\temptablewidth{0.95\textwidth}
\begin{center}
 \begin{tabular*}{\temptablewidth}{@{\extracolsep{\fill}}lcccccc}\hline
 $N$ &\multicolumn{2}{c}{$r=1$} &\multicolumn{2}{c}{$r_{opt}=\frac{4-\alpha}{2-\alpha}$} &\multicolumn{2}{c}{$r=2$}    \\
 \cline{2-3}\cline{4-5}\cline{6-7}
 & $e_{H^1}(N)$         &Order     &$e_{H^1}(N)$    &Order    &$e_{H^1}(N)$ &Order     \\ \hline
 20 &1.5567e-00     &-       &1.1021e-00   &-        &1.3206e-00  &-  \\
 40 &1.2120e-00     &0.3611  &5.7352e-01   &0.9423   &9.0835e-01  &0.5399    \\
 80 &9.1647e-01     &0.4032  &2.8278e-01   &1.0201   &6.0692e-01  &0.5817 \\
 160 &6.6585e-01    &0.4609  &1.3683e-01   &1.0473   &3.9122e-01  &0.6335    \\
 320 &4.5430e-01    &0.5516  &6.4692e-02   &1.0808   &2.3828e-01  &0.7153    \\\hline
 Optimal Order  &\multicolumn{6}{c}{1.125} \\ \hline
\end{tabular*}
\end{center}
\end{table}

\begin{table}[!ht]
\caption{$L1$ scheme for Example \ref{ex-test} applied to system (\ref{eq-gov-trans-novel}) with $r_{opt} =\max\{\frac{4-\alpha}{\alpha},2\}$.}\label{table-test3}
\renewcommand{\arraystretch}{1}
\def\temptablewidth{0.95\textwidth}
\begin{center}
 \begin{tabular*}{\temptablewidth}{@{\extracolsep{\fill}}lcccccc}\hline
 $N$  &\multicolumn{2}{c}{$\alpha=1.25$} &\multicolumn{2}{c}{$\alpha=1.5$} &\multicolumn{2}{c}{$\alpha=1.75$}\\
 \cline{2-3}\cline{4-5}\cline{6-7}
 & $e_{H^1}(N)$         &Order     &$e_{H^1}(N)$    &Order    &$e_{H^1}(N)$ &Order     \\ \hline   
 20  &4.3482e-02   &-       &4.5973e-02   &-        &1.4843e-01  &-  \\
 40  &2.7953e-02   &0.6374  &3.1962e-02   &0.5244   &7.5944e-02  &0.9668    \\
 80  &1.4531e-02   &0.9439  &1.7218e-02   &0.8925   &3.7619e-02  &1.0135 \\
 160 &6.5447e-03   &1.1507  &8.2308e-03   &1.0648   &1.7910e-02  &1.0707    \\
 320 &2.6897e-03   &1.2829  &3.6316e-03   &1.1804   &8.0457e-03  &1.1545  \\\hline
 Theoretical Order
 &\multicolumn{2}{c}{1.375} &\multicolumn{2}{c}{1.25}&\multicolumn{2}{c}{1.125}\\ \hline
\end{tabular*}
\end{center}
\end{table}

\begin{table}[!ht]
\caption{$L2$-$1_\sigma$ scheme for Example \ref{ex-test} applied to system (\ref{eq-gov-trans-novel}) with $r_{opt} =\max\{\frac{4}{\alpha},\frac{8}{4-\alpha}\}$.}\label{table-test4}
\renewcommand{\arraystretch}{1}
\def\temptablewidth{0.95\textwidth}
\begin{center}
 \begin{tabular*}{\temptablewidth}{@{\extracolsep{\fill}}lcccccc}\hline
 $N$  &\multicolumn{2}{c}{$\alpha=1.25$} &\multicolumn{2}{c}{$\alpha=1.5$} &\multicolumn{2}{c}{$\alpha=1.75$}\\
 \cline{2-3}\cline{4-5}\cline{6-7}
 & $e_{H^1}(N)$         &Order     &$e_{H^1}(N)$    &Order    &$e_{H^1}(N)$ &Order     \\ \hline   
 8    &4.9369e-02   &-       &5.9829e-02   &-        &1.4207e-01  &-  \\
 16   &2.3531e-02   &1.0690  &3.0216e-02   &0.9856   &5.9822e-02  &1.2479    \\
 32  &7.3752e-03   &1.6738  &9.0900e-03   &1.7329   &2.3059e-02  &1.3754 \\
 64  &1.5651e-03   &2.2364  &1.8637e-03   &2.2861   &5.7428e-03  &2.0055  \\\hline
 Theoretical Order
 &\multicolumn{2}{c}{2} &\multicolumn{2}{c}{2}&\multicolumn{2}{c}{2}\\ \hline
\end{tabular*}
\end{center}
\end{table}

\begin{example}\label{ex2}
Problems (\ref{eq-gov}) with $\Omega=(0,\pi)$, $T=1$,
$a_0(x)=x$ for $x\in(0,\pi/2]$ and $a_0(x)=\pi-x$ for $x\in(\pi/2,\pi),$ $a_1(x)=\chi_{(0,\frac{\pi}{2}]}(x),$ $f(x)=\sin(x)$. $e_{L^2}=\max_{1\leq n\leq N}\|u_h^n-U_h^n\|_{L^2}$.
We use the numerical solution with the size of the space grids $h=\frac{\pi}{100}$ and $N=2560$ being the number of partitions in the time grids as the reference solution for $L1$.
\end{example}

\begin{table}[!ht]
\caption{$L1$ scheme for Example \ref{ex2} applied to system (\ref{eq-gov-trans}) with $\alpha=1.25$.}\label{table5}
\renewcommand{\arraystretch}{1}
\def\temptablewidth{0.95\textwidth}
\begin{center}
 \begin{tabular*}{\temptablewidth}{@{\extracolsep{\fill}}lcccccc}\hline
 $N$ &\multicolumn{2}{c}{$r=1$} &\multicolumn{2}{c}{$r_{opt}=\frac{4-\alpha}{2-\alpha}$} &\multicolumn{2}{c}{$r=\frac{4-\alpha}{\alpha}$}    \\
 \cline{2-3}\cline{4-5}\cline{6-7}
 & $e_{L^{2}}(N)$         &Order     &$e_{L^{2}}(N)$    &Order    &$e_{L^{2}}(N)$ &Order     \\ \hline
 20 &8.8265e-02     &-       &2.8315e-02   &-        &2.9771e-02  &-  \\
 40 &5.4054e-02     &0.7075  &1.1272e-02   &1.3288   &1.2518e-02  &1.2499    \\
 80 &3.2242e-02     &0.7454  &4.3937e-03   &1.3593   &5.1100e-03  &1.2926 \\
 160 &1.8617e-02    &0.7923  &1.6844e-03   &1.3832   &2.0334e-03  &1.3294    \\
 320 &1.0204e-02    &0.8675  &6.2885e-04   &1.4215   &7.8166e-04  &1.3793    \\\hline
 Optimal Order  &\multicolumn{6}{c}{1.375} \\ \hline
\end{tabular*}
\end{center}
\end{table}

\begin{table}[!ht]
\caption{$L1$ scheme for Example \ref{ex2} applied to system (\ref{eq-gov-trans}) with $\alpha=1.75$.}\label{table6}
\renewcommand{\arraystretch}{1}
\def\temptablewidth{0.95\textwidth}
\begin{center}
 \begin{tabular*}{\temptablewidth}{@{\extracolsep{\fill}}lcccccc}\hline
 $N$ &\multicolumn{2}{c}{$r=1$} &\multicolumn{2}{c}{$r_{opt}=\frac{4-\alpha}{2-\alpha}$} &\multicolumn{2}{c}{$r=2$}    \\
 \cline{2-3}\cline{4-5}\cline{6-7}
 & $e_{L^{2}}(N)$         &Order     &$e_{L^{2}}(N)$    &Order    &$e_{L^{2}}(N)$ &Order     \\ \hline
 20 &1.3571e-00     &-       &9.4906e-01   &-        &1.1505e-00  &-  \\
 40 &1.0571e-00     &0.3604  &4.8916e-01   &0.9562   &7.9178e-01  &0.5391    \\
 80 &7.9962e-00     &0.4027  &2.3762e-01   &1.0417   &5.2929e-01  &0.5810 \\
 160 &5.8106e-01    &0.4606  &1.1328e-01   &1.0687   &3.4132e-01  &0.6330    \\
 320 &3.9649e-01    &0.5514  &5.3117e-02   &1.0927   &2.0794e-01  &0.7149    \\\hline
 Optimal Order  &\multicolumn{6}{c}{1.125} \\ \hline
\end{tabular*}
\end{center}
\end{table}

\begin{table}[!ht]
\caption{$L1$ scheme for Example \ref{ex2} applied to system (\ref{eq-gov-trans-novel}) with $r_{opt} =\max\{\frac{4-\alpha}{\alpha},2\}$.}\label{table7}
\renewcommand{\arraystretch}{1}
\def\temptablewidth{0.95\textwidth}
\begin{center}
 \begin{tabular*}{\temptablewidth}{@{\extracolsep{\fill}}lcccccc}\hline
 $N$  &\multicolumn{2}{c}{$\alpha=1.25$} &\multicolumn{2}{c}{$\alpha=1.5$} &\multicolumn{2}{c}{$\alpha=1.75$}\\
 \cline{2-3}\cline{4-5}\cline{6-7}
 & $e_{L^{2}}(N)$         &Order     &$e_{L^{2}}(N)$    &Order    &$e_{L^{2}}(N)$ &Order     \\ \hline   
 20  &7.2407e-03   &-       &2.0167e-02   &-        &1.0373e-01  &-  \\
 40  &2.9740e-03   &1.2837  &8.8484e-03   &1.1885   &4.8803e-02  &1.0879    \\
 80  &1.1799e-03   &1.3337  &3.7861e-03   &1.2247   &2.2536e-02  &1.1147 \\
 160 &4.5664e-04   &1.3696  &1.5830e-03   &1.2581   &1.0185e-02  &1.1457    \\
 320 &1.7136e-04   &1.4140  &6.4006e-04   &1.3064   &4.4395e-03  &1.1980  \\\hline
 Theoretical Order
 &\multicolumn{2}{c}{1.375} &\multicolumn{2}{c}{1.25}&\multicolumn{2}{c}{1.125}\\ \hline
\end{tabular*}
\end{center}
\end{table}

\newpage
\section{Conclusions}
In this work, we propose a symmetric fractional-order reduction (SFOR) method for solving fractional wave equations with low regularity on nonuniform temporal meshes. By coupling classical $L$-type discretizations—including the $L1$ and $L2$-$1_\sigma$ methods—with newly derived optimal parameter choices specifically designed for nonuniform grids, we construct unconditionally convergent and high-order numerical algorithms. A rigorous error analysis demonstrates that the SFOR method attains optimal convergence rates under significantly relaxed regularity assumptions on the exact solution. Comprehensive numerical experiments corroborate both the theoretical convergence orders and the method's robustness across diverse mesh grading strategies. This framework thus provides a theoretically grounded and computationally reliable approach for simulating challenging fractional wave dynamics. Future work will explore extensions to multidimensional spatial domains and broader families of time-fractional and space-time-fractional PDEs.

\section*{Declarations}
On behalf of all authors, the corresponding author states that there is no conflict of interest. No datasets were generated or analyzed during the current study.

\bibliographystyle{plain}
\bibliography{ref}

\end{document}